\patchcmd{\ALG@step}{\addtocounter{ALG@line}{1}}{\refstepcounter{ALG@line}}{}{}
\newcommand{\ALG@lineautorefname}{Step}
\newcommand{\articletype}[1]{}
\newcommand{\@affilblock}{}
\newcommand{\@emailblock}{}
\newcommand{\@keywordblock}{}
\newcommand{\affil}[1]{\renewcommand{\@affilblock}{#1}}
\newcommand{\email}[1]{\renewcommand{\@emailblock}{#1}}
\newcommand{\keywords}[1]{\renewcommand{\@keywordblock}{#1}}
\newcommand{\orcid}[1]{\,\orcidlink{#1}}
\let\article@author\author
\renewcommand{\author}[1]{%
  \article@author{%
    #1%
    \ifx\@affilblock\@empty\else
      \\[4pt]{\normalfont\small \@affilblock}%
    \fi
    \ifx\@emailblock\@empty\else
      \\[2pt]{\normalfont\small \texttt{\@emailblock}}%
    \fi
  }%
}
\date{\today}
  \let\article@abstract\abstract
  \let\endarticle@abstract\endabstract
  \renewenvironment{abstract}{%
    \maketitle
    \article@abstract
  }{%
    \ifx\@keywordblock\@empty\else
      \par\medskip\noindent\textbf{Keywords:}\ \@keywordblock
    \fi
    \endarticle@abstract
  }%
\newcommand{\ack}[1]{\section*{Acknowledgments}#1}
\newcommand{\data}[1]{\section*{Data availability}#1}
\DeclareMathAlphabet{\mathup}{OT1}{\familydefault}{m}{n}
\DeclareSymbolFont{yhlargesymbols}{OMX}{yhex}{m}{n}
\DeclareMathAccent{\wideparen}{\mathord}{yhlargesymbols}{"F3}
\definecolor{grassgreen}{RGB}{92,135,39}
\newtheorem{theorem}{Theorem}[section]
\newtheorem{lemma}{Lemma}[section]
\newtheorem{proposition}{Proposition}[section]
\newtheorem{remark}{Remark}[section]
\newtheorem{assumption}{Assumption}[section]
\crefname{theorem}{theorem}{theorems}
\Crefname{theorem}{Theorem}{Theorems}
\crefname{corollary}{corollary}{corollaries}
\Crefname{corollary}{Corollary}{Corollaries}
\crefname{lemma}{lemma}{lemmas}
\Crefname{lemma}{Lemma}{Lemmas}
\crefname{proposition}{proposition}{propositions}
\Crefname{proposition}{Proposition}{Propositions}
\crefname{remark}{remark}{remarks}
\Crefname{remark}{Remark}{Remarks}
\crefname{assumption}{assumption}{assumptions}
\Crefname{assumption}{Assumption}{Assumptions}
\newcommand{\llim}{\nearrow}  
\newcommand{\rlim}{\searrow}  
\newcommand{\Diag}[1]{\mathsf{Diag}\left(#1\right)}                 
\newcommand{\del}[2]{\frac{\partial{#1}}{\partial{#2}}}             
\newcommand{\delll}[3]{\frac{\partial^2{#1}}{\partial{#2}\,\partial{#3}}} 
\newcommand{\mat}[1]{\mathbf{{#1}}}                                 
\renewcommand{\vec}[1]{\mathbf{{#1}}}                                 
\DeclareMathOperator*{\argmin}{arg\,min}  
\DeclareMathOperator*{\argmax}{arg\,max}  
\newcommand{\proj}{\mathit{P}}       
\newcommand{\Proj}[2]{\proj_{#1}{\left(#2\right)}}       
\newcommand*{\opt}{^{\mkern-1.5mu\mathrm{opt}}}               
\newcommand*{\tran}{^{\mkern-1.5mu\mathsf{T}}}                
\newcommand*{\adj}{^{\mkern-1.5mu\mathsf{*}}}                 
\newcommand*{\inv}{^{\mkern-1.5mu\mathsf{-1}}}                
\newcommand*{\pseudoinv}{^{\mkern-1.5mu\mathsf{\dagger}}}     
\newcommand*{\Pseudoinv}[1]{\left(#1\right)\pseudoinv}        
\newcommand{\domain}{\mathcal{D}}                             
\newcommand{\trace}{\mathrm{Tr}}                              
\newcommand{\Trace}[1]{\trace \left(#1\right)}                
\newcommand{\abs}[1]{\left| {#1} \right|}                  
\newcommand{\norm}[1]{\left\| {#1} \right\|}                  
\newcommand{\sqnorm}[1]{\left\| {#1} \right\|^2}              
\newcommand{\wnorm}[2]{\left\| {#1} \right\|_{#2}}            
\newcommand\restr[2]{{ \left.\kern-\nulldelimiterspace        
                     {#1}\vphantom{\big|} \right|_{#2}}}
\newcommand{\Rnum}{\mathbb{R}}  
\newcommand{\xcont}{u}                             
\newcommand{\y}{\mathbf{y}}                        
\newcommand{\obs}{\y}                              
\newcommand{\param}{\vec{\theta}}                  
\newcommand{\iparam}{\param}                       
\newcommand{\iparprior}{\iparam_{\rm pr}}          
\newcommand{\iparb}{\iparprior}                    
\newcommand{\iparpost}{\iparam_{\rm post}^\obs}    
\newcommand{\ipara}{\iparpost}                     
\newcommand{\model}{\Model}
\newcommand{\inoise}{\vec{\varepsilon}}
\newcommandx{\noise}[1][1={}]{\inoise^{\rm #1}}
\newcommand{\Nobs}{\textsc{N}_{\rm obs}}                        
\newcommand{\Nens}{\textsc{N}_{\rm ens}}                        
\newcommand{\nobs}{n_{t}}                                  
\newcommand{\nobstimes}{\nobs}                             
\newcommand{\Nsens}{n_{\rm s}}                         
\newcommand{\Cparamprior}{\mat\Gamma_{{\rm pr}}}                
\newcommand{\Cparampost}{\mat\Gamma_{{\rm post}}}               
\newcommand{\Cobsnoise}{\mat{\Gamma}_{ {\rm noise}}}            
\newcommand{\Cparampriormat}{\Cparamprior}                      
\newcommand{\Cparampostmat}{\Cparampost}                        
\newcommand{\Fcont}{\mathcal{F}}                                 
\newcommand{\F}{\mathbf{F}}                                      
\newcommand{\obj}{\mathcal{U}}            
\newcommand{\stochobj}{\Upsilon}          
\newcommand{\utilityfunction}{\mathcal{U}}
  \NewDocumentCommand{\PP}{ s O{} >{\SplitArgument{1}{|}}m }
   {
    \mathbb{P}
    \IfBooleanTF{#1}
     { \PPauto #3 }
     { \PPfixed {#2} #3 }
   }
  \NewDocumentCommand{\PPauto}{mm}
   {
    \left(
    \IfNoValueTF{#2} { #1 } { #1 \;\middle|\; #2 }
    \right)
   }
  \NewDocumentCommand{\PPfixed}{mmm}
   {
    \mathopen{#1(}
    \IfNoValueTF{#3} { #2 } { #2 \mathrel{#1|} #3 }
    \mathclose{#1)}
   }
  \NewDocumentCommand{\PC}{ s O{} >{\SplitArgument{1}{|}}m }
   {
    \mathsf{cov}
    \IfBooleanTF{#1}
     { \PCauto #3 }
     { \PCfixed {#2} #3 }
   }
  \NewDocumentCommand{\PCauto}{mm}
   {
    \left(
    \IfNoValueTF{#2} { #1 } { #1 \;\middle|\; #2 }
    \right)
   }
  \NewDocumentCommand{\PCfixed}{mmm}
   {
    \mathopen{#1(}
    \IfNoValueTF{#3} { #2 } { #2 \mathrel{#1|} #3 }
    \mathclose{#1)}
   }
\newcommand{\Prob}{\mathbb{P}}                                   
\newcommand{\CondProb}[2]{\PP*{#1\! |\! #2} }  
\newcommand{\Var}{\mathsf{var}}                                  
\newcommand{\Cov}{\mathsf{cov}}                                  
\newcommand{\brCov}[1]{\Cov{\Bigl(#1\Bigr)} }                                  
\newcommand{\brVar}[1]{\Var\left(#1\right)}                      
\newcommand{\CondVar}[2]{\Var{\left(#1|#2\right)}}                                  
\newcommand{\GM}[2]{\mathcal{N}\!\left( {#1}, {#2}\right)}       
\newcommand{\Expect}[2]{\mathbb{E}_{#1}{\Bigl[ #2 \Bigr]} }     
\newcommand{\CondExpect}[2]{\mathbb{E}{\Bigl[ #1|#2\Bigr]} }     
\newcommand{\baseline}{b}
\newcommand{\Cobj}{T_{\!\obj}}
\newcommand{\FisherI}{\mathcal{I}}    
\newcommand{\hyperparam}{\vec{p}}     
\newcommand{\design}{\vec{\zeta}}                                       
\newcommand{\designdomain}{\mathcal{X}}               
\newcommand{\designsum}{Z}                                       
\newcommand{\designsumval}{z}                                          
\newcommand{\designsumset}{\mathcal{Z}}                                
\newcommand{\R}{ \mathbf{R} }
\newcommand{\Model}{\mathcal{M}}
\newcommand{\pyoed}{{PyOED}\xspace}
\begin{document}

\articletype{Paper} 

\title{Probabilistic Approach to Black-Box Binary Optimization with Budget Constraints: Application to Sensor Placement}
\author{Ahmed Attia\orcid{0000-0001-5940-9247}}

\affil{
  Mathematics and Computer Science Division,
  Argonne National Laboratory, Lemont, IL, USA
}

\email{aattia@anl.gov}

\keywords{
  Optimal Experimental Design, 
  Sensor Placement,
  Probabilistic Optimization,
  Black-box Optimization. 
}

\begin{abstract}
 This paper presents a fully probabilistic approach for solving optimal experimental design problems under budget constraints. The experimental design is viewed as a random variable and is associated with a parametric conditional distribution that inherently models the budget constraints. The original optimization problem is replaced with an optimization over the expected value of the original objective, which is then optimized over the distribution parameters. The resulting optimal parameter (policy) is used to sample the feasible region of binary space to produce estimates of the optimal solution(s) of the original optimization problem. In this work we extend the family of conditional Bernoulli models to model the random variable conditioned by the total number of nonzero entries, that is, the budget constraint. This approach (a) is generally applicable to binary optimization problems with nonstochastic black-box objective functions and budget constraints; (b) employs conditional probabilities to model and sample only the feasible region and thus considerably reduces the computational cost compared with employing soft constraints; and (c) does not employ soft constraints and thus does not require tuning of a regularization parameter, for example to promote sparsity, which is generally challenging. The proposed approach is verified numerically using an optimal sensor placement experiment based on an advection-diffusion forward model in a parameter identification setup.
\end{abstract}

\section{Introduction}
  \label{sec:Introduction}
  This paper develops a probabilistic, black-box approach for solving binary optimization
  problems with budget constraints, with optimal sensor placement for inverse
  problems as the primary application.
  We motivate the problem and review relevant background in
  \Cref{subsec:scientific_simulations,subsec:background_OED},
  before presenting the method and its theoretical properties.

  Model-based optimal experimental design (OED) \cite{FedorovLee00,Pazman86,Pukelsheim93}
  is the general mathematical formalism for optimizing the configuration 
  of an experimental or observational setup. 
  Examples include optimizing the placement of sensors or measurement instruments
  and the scheduling of data acquisition for physical experiments.
  Model-based OED has seen a recent surge of interest in the field of sensor placement for 
  model-constrained Bayesian inverse problems \cite{huan2024optimal,alexanderian2020optimal}. 

  \subsection{Predictive Simulations and Inverse Problems}
    \label{subsec:scientific_simulations}
    Consider the following abstract formulation. The mathematical model
    $\model\left(\iparam; \noise[\model] \right) \!=\! 0$
    (e.g., a PDE) governs the evolution of the physical phenomenon of interest,
    with $\iparam$ denoting the model parameter, for example, initial condition,
    state, or both.
    The model is driven by a stochastic forcing $\noise[\model]$ 
    with a probability law that accounts for the discrepancies that affect model 
    predictability \cite{Hairer09}.
    The relation between observations $\obs$ and the 
    model parameter $\iparam$ is typically characterized by the forward problem
    $ \obs  = \Fcont(\iparam) + \noise[\obs] $,
    where the forward operator $\Fcont$, also known as the \emph{parameter-to-observable map}, 
    maps the parameter $\iparam$ onto the spatiotemporal observation space and
    $\noise[\obs]$ quantifies the observational uncertainties. 

    A fundamental computational science problem is the \emph{inverse problem}
    (commonly called data assimilation, DA, in atmospheric sciences):
    the retrieval of the model parameter $\iparam$
    from noisy observations $\obs$ given the model dynamics
    \cite{vogel2002computational,bertero2020introduction,kaipio2006statistical,tarantola2005inverse}.
    DA has gained special consideration because of the computational complexity 
    of simulations targeted by DA algorithms, 
    lack of physical knowledge or representation, 
    and computational 
    limitations \cite{ghil1991data,bouttier2002data,evensen2009data,kalnay2003atmospheric,attia2016reducedhmcsmoother,attia2015hmcfilter}. 
    Common DA approaches include 
    variational DA \cite{asch2016data,bannister2017review};
    ensemble methods such as
    Kalman filtering  
    \cite{evensen2009data,chen2003bayesian,fillion2020iterative} and
    particle filtering 
    \cite{van2009particle,chen2003bayesian,holm2020massively,morzfeld2018variational}; and 
    Markov chain Monte Carlo methods 
    \cite{attia2015hmcsmoother,attia2016reducedhmcsmoother,attia2018ClHMCAtmos}.

    DA algorithms generally aim to estimate the unknown parameter $\iparam$ 
    with associated uncertainties
    typically by estimating the moments of the posterior.
    For example, they can calculate a maximum a posteriori (MAP) estimate  
    of the unknown parameter $\iparam$ 
    and estimate the posterior covariance 
    to quantify the uncertainties in such an estimate. 
    Given a prior $\Prob(\iparam)$, 
    Bayesian inversion employs Bayes' theorem to characterize the knowledge 
    about $\iparam$ through the posterior: 
    \begin{equation}\label{eqn:inversion_posterior}
      \CondProb{\iparam}{\obs} 
        \propto \CondProb{\obs}{\iparam} \Prob(\iparam) \,,
    \end{equation}
    where $\CondProb{\obs}{\iparam}$ is the data likelihood. 
    For Gaussian observation errors $\vec{\delta} \sim \GM{\vec{0}}{\Cobsnoise}$,
    Gaussian prior $\iparam \sim \GM{\iparb}{\Cparampriormat}$, 
    and linear forward operator $\Fcont\equiv\F$,
    the posterior $\CondProb{\iparam}{\obs} = \GM{\ipara}{\Cparampostmat}$ is Gaussian,
    with 
    \begin{equation}\label{eqn:Posterior_Params}
      \Cparampostmat = \left(\F \adj \Cobsnoise^{-1} \F 
        + \Cparampriormat^{-1} \right)^{-1} \,, \quad
      \ipara = \Cparampostmat \left( \Cparampriormat^{-1} \iparb 
        + \F\adj \Cobsnoise^{-1}\, \obs \right) \,,
    \end{equation}
    where $\F\adj$ is the forward operator adjoint. 
    When the model $\Fcont$ is nonlinear and/or the observational errors are non-Gaussian, 
    the Bayesian inverse problem can still be solved, for example, 
    by following a Laplace approximation approach in which the posterior is approximated 
    with a Gaussian centered around the MAP estimate $\iparam^{\rm map}$, 
    that is employing the tangent linear model 
    $\F:=\partial\model(\iparam)|_{\iparam=\iparam^{\rm map}}$. 
    Alternative approaches that accommodate nonlinearities and unconventional uncertainties
    include ensemble-based methods.

    DA methods rely on highly informative data to accurately infer the unknown parameter for 
    more accurate simulations and prediction.
    Thus, optimizing data acquisition, especially under budgetary or computational limitations, 
    e.g., limited number of sensors, is of utmost importance to large-scale inverse problems.
    Model-based OED enables tackling this problem.

  \subsection{Model-Based Optimal Experimental Design}
  \label{subsec:background_OED}
    In model-based OED, an experimental design $\design\in\designdomain$ encodes 
    the decision variables such as sensor locations, data acquisition frequency,
    the trajectory of a drone or a satellite, 
    real-time data acquisition in a streaming environment, or 
    static/dynamic settings of an experiment. 
    OED problems generally seek a design that ``optimizes'' some
    ``utility function'' $\utilityfunction$ that quantifies
    the quality of the design $\design$ and depends on $\design$
    and other arguments originating in the inference problem \eqref{eqn:inversion_posterior}.
    The utility function is typically defined as a scalar summary of either
    the Fisher information matrix $\mathcal{I}$, which is maximized,
    or the posterior covariance $\Cparampostmat$ \eqref{eqn:Posterior_Params},
    which is minimized to obtain an optimal design.
    Scalar summaries includes the trace (A-optimal design), the determinant
    (D-optimal design), and the maximum eigenvalue (E-optimal design)
    of the posterior covariance of the inferred parameters,
    which are ideal for linear inverse problems.
    Evaluating $\utilityfunction$ typically requires numerous model simulations
    and is thus computationally expensive.
    Choices of the utility function in nonlinear 
    settings include the 
    expected information gain (EIG) 
    which is maximized to obtain an optimal design, 
    and the alphabetic criteria (e.g., A-, D-, E-optimal designs) combined with 
    Laplace approximation of the posterior around the MAP estimate; see e.g., \cite{chowdharyNonlinearRobust}.

    Of special interest is the problem of optimal resource allocation 
    and optimal sensor placement in large-scale inverse 
    problems \cite{bannister2017review,attia2016PhD_advanced_sampling,attia2016reducedhmcsmoother}. 
    Given $\Nsens$ candidate sensor locations,
    this problem is usually formulated as a model-constrained OED problem \cite{Pukelsheim93,Pazman86,Ucinski00}:
    \begin{equation}\label{eqn:binary_optimization_of_interest}
      \design\opt \in \argmax_{\design\in\designdomain}{\obj(\design) }
        \quad \textrm{s.t.} \quad  \design \in \{0, 1 \}^{\Nsens} \,, \quad
           \wnorm{\design}{0} \in \designsumset \subseteq \{0, 1, \ldots, \Nsens\}  \,,
    \end{equation}
    where $\design$ is an experimental design
    with a binary encoding of candidate sensor locations;
    $\design_i=1$ means placing a sensor, and $\design_i=0$ means not placing a sensor at the
    $i$th candidate location.
    Here, $\wnorm{\design}{0} := \sum_{i=1}^{\Nsens} \design_i$ denotes the $\ell_0$-norm
    (i.e., the number of active sensors), and
    $\designsumset \subseteq \{0, 1, \ldots, \Nsens\}$ is a prescribed set of admissible budget values
    (e.g., $\designsumset = \{z\}$ for a fixed budget $z$, or $\designsumset = \{1,\ldots,z\}$ for an
    upper-bound constraint).
    The feasible design domain $\designdomain\subseteq \{0, 1\}^{\Nsens}$ is the set of all binary
    vectors satisfying the budget constraint $\wnorm{\design}{0}\in\designsumset$,
    which is critical, for example, when the sensors are
    expensive to deploy and/or operate.
    The majority of OED problems can be written as maximization or equivalent 
    minimization problems. For example, in regression problems
    an A-optimal design can be defined as the maximizer of the 
    Fisher information matrix trace, or equivalently the minimizer of 
    its inverse assuming the information matrix is full rank \cite{FedorovLee00}.
    Care must be taken for inverse problems, however, because the forward operator is
    typically low-rank, making the Fisher information matrix rank-deficient.
    This is a primary motivation for Bayesian formulations in OED for inverse problems.
    In our work, we consider both maximization and minimization equally and leave the choice of the
    OED criterion (the utility function) for the user to define.

    Traditional approaches for solving the budget-constrained binary optimization problem \eqref{eqn:binary_optimization_of_interest}
    include \emph{greedy/myopic} \cite{koval2024non} and
    \emph{design-space relaxation} \cite{attia2022optimal,yu2018scalable} methods.
    While they can provide useful results for small-scale problems,
    relaxation methods solve an alternative (continuous) problem
    and typically require a soft penalty parameter to promote sparsity
    \cite{alexanderian2014optimal,attia2022optimal},
    whose tuning is generally challenging.
    Greedy methods, on the other hand, are generally limited to binary designs, fixed budgets,
    and do not inherently account for uncertainties or
    scale well with increasing inference and/or design dimensionality.

    This paper describes a fully probabilistic approach for solving budget-constrained
    binary sensor placement problems \eqref{eqn:binary_optimization_of_interest} which 
    addresses these limitations. 
    Specifically, the proposed probabilistic approach  
    regards the utility function $\utilityfunction$ as a black-box
    real-valued deterministic objective function, 
    and optimizes over the parameters of a probability distribution on the feasible binary designs.
    This can be viewed as a relaxation in the Bernoulli parameter space rather than in the design space.
    Unlike design-space relaxation, the proposed approach requires no soft penalty parameter and every
    sample drawn from the distribution is a feasible binary design by construction,
    since the budget constraint is encoded directly in the probability model.

    The proposed framework is formulated as a general-purpose black-box optimizer:
    the utility $\utilityfunction$ is treated as a real-valued deterministic function,
    with no gradient or structural information assumed.
    This is appropriate for the many settings where such information is unavailable ---
    for example, nonlinear inverse problems where the EIG must be estimated by nested
    Monte Carlo \cite{huan2024optimal}, legacy simulation codes with no adjoint,
    robust OED formulations with nonsmooth ensemble objectives \cite{attia2023robust},
    or E-optimal design where the maximum-eigenvalue criterion is non-differentiable
    at points of repeated eigenvalues.
    In contrast, for \emph{linear-Gaussian} inverse problems where the posterior covariance
    and Fisher information matrix admit closed-form expressions, structure-aware methods
    \cite{alexanderian2014optimal,AlexanderianSaibaba17} may be more efficient;
    in such cases the proposed approach trades that efficiency for generality.
    Moreover, \eqref{eqn:binary_optimization_of_interest} is an NP-hard combinatorial
    problem that arises broadly --- in machine learning and computer vision
    \cite{boykov2001fast,keuchel2003binary,shi2000normalized,davenport20141,shi2013multi,wang2015learning},
    natural and social sciences \cite{he2016joint,chan2011convex,yuan2013truncated},
    healthcare \cite{elton2019deep,yang2019machine}, and engineering
    \cite{wolsey2014integer,papalexopoulos2022constrained} --- and the framework
    applies to all such settings without modification.

    \noindent{}\textbf{Contributions.}
      This work directly extends the probabilistic binary optimization framework
      introduced in \cite{attia2022stochastic} (Attia, Leyffer, and Munson, SISC 2022),
      of which the current author is a co-author,
      and its robust extension in \cite{attia2023robust}.
      The prior work \cite{attia2022stochastic} 
      handles unconstrained binary optimization
      and enforces budget constraints only via soft penalty terms added to the objective,
      which requires expensive and often difficult tuning of the regularization parameter.
      The present paper addresses this limitation by developing conditional probability models
      that intrinsically encode the budget constraint,
      thereby replacing the combinatorial feasibility requirement with a well-defined
      probability distribution over the feasible set.
      The specific contributions of this work, which are not present in \cite{attia2022stochastic},
      are summarized as follows.
      \begin{enumerate}[leftmargin=*]
        \item We present an extensive treatment of a class of conditional distributions
          suitable for modeling budget constraints in binary optimization,
          namely, the Poisson binomial and the conditional Bernoulli models. 
          The existing theory and computational methods for this class of probability models,
          however,
          are very sparse and are limited to non-degenerate probabilities.
          We considerably extend these models to properly handle degenerate probabilities,
          discuss their first- and second-order moments,
          develop analytical forms of their derivatives, and develop bounds
          on derivatives 
          needed for the convergence analysis of the proposed
          approach.
          In addition to the crucial role these derivatives play in the proposed approach,
          they are also suitable in general for model fitting,
          e.g., maximum likelihood estimation.
        \item We propose a fully probabilistic approach that employs the developed
          probability models
          and we provide a complete algorithmic statement
          for solving binary optimization problems with
          black-box objectives and \emph{hard} budget constraints,
          without soft penalty parameters.
          This approach can be used as a plug-and-play tool for solving a wide
          range of sensor placement optimization and decision-making problems where the objective
          can be treated as a black box.
        \item We discuss the convergence of the proposed approach and analyze its performance,
          strengths, and  limitations.
        \item We provide an open-source implementation 
          through
          \pyoed \cite{attia2024pyoed}, and we provide 
          extensive numerical analysis for the proposed approach
          using an OED optimal sensor placement experiment. 
      \end{enumerate}

    \noindent{}\textbf{Paper outline.} 
    \Cref{sec:background} briefly reviews the probabilistic approach to unconstrained
    binary optimization from \cite{attia2022stochastic} and identifies
    the soft-constraint limitation that motivates the present work.
    \Cref{sec:probability_models} develops the conditional probability models  
    required for the proposed probabilistic optimization approach, which is presented in 
    \Cref{sec:probabilistic_optimization}.
    Numerical results are discussed in  \Cref{sec:numerical_experiments}, and 
    conclusions are made in \Cref{sec:conclusions}.

  \section{The Probabilistic Approach to Binary Optimization}
    \label{sec:background}
    This section provides a concise overview of the probabilistic binary optimization
    framework from \cite{attia2022stochastic}, which forms the foundation of the present work.
    We highlight the limitation that motivates our extension: the inability to enforce
    hard budget constraints without soft penalty terms.
    Readers familiar with \cite{attia2022stochastic} may proceed directly to
    \Cref{sec:probability_models}.

    Inspired by policy optimization in reinforcement learning
    \cite{BertsekasTsitsiklis96},
    \cite{attia2022stochastic} introduced a probabilistic approach for unconstrained
    binary optimization, subsequently extended to robust (max-min) optimization
    in \cite{attia2023robust}.
    While this approach is well-suited for black-box objective functions,
    budget constraints are enforced only via soft penalty terms appended to the objective,
    requiring expensive and often difficult tuning of the regularization parameter.
    This is especially challenging in robust settings, where traditional tuning methods such as
    the L-curve \cite{lawson1995solving} can fail due to the bilevel structure of the problem.
    Heuristic approaches have also been proposed \cite{ryu2023heuristic}, but these are
    problem-specific, still explore the full probability space, and are not guaranteed to
    yield feasible solutions.
    The present paper resolves this limitation by developing conditional probability models
    that encode the budget constraint by construction;
    see \Cref{subsec:modeling_constraints}. 

    The original probabilistic approach \cite{attia2022stochastic} addresses
    the unconstrained binary optimization problem
    \begin{equation}\label{eqn:binary_optimization_unconstrained}
      \design\opt \in \argmax_{\design}{\obj(\design) }
        \quad \textrm{s.t.} \quad
          \design=(\design_1, \ldots, \design_{\Nsens})\tran \in \{0, 1\}^{\Nsens}
          \,.
    \end{equation}
    The central idea is to treat $\design$ as a random variable following a multivariate
    Bernoulli distribution with parameter $\hyperparam\in[0,1]^{\Nsens}$,
    whose joint probability mass function (PMF) is
    \begin{equation}\label{eqn:joint_Bernoulli_pmf_prod}
      \CondProb{\design}{\hyperparam}
        := \prod_{i=1}^{\Nsens} {\hyperparam_i ^ {\design_i}\,
          \left(1-\hyperparam_i\right)^{1-\design_i}}
          \,,\quad \design_i \in \{0, 1\} \,,\quad \hyperparam_i \in [0,1] \,,
    \end{equation}
    and to replace \eqref{eqn:binary_optimization_unconstrained} with a stochastic
    optimization problem over $\hyperparam$,
    \begin{equation}\label{eqn:stochastic_optimization}
      \hyperparam\opt \in \argmax_{\hyperparam \in [0,1]^{\Nsens}}
        \stochobj(\hyperparam)
      := \Expect{\design\sim\CondProb{\design}{\hyperparam}}{\obj(\design)}
        \,.
    \end{equation}
    The stochastic objective $\stochobj(\hyperparam)$ is optimized via a policy-gradient
    (REINFORCE-type \cite{BertsekasTsitsiklis96}) stochastic gradient ascent,
    where the gradient is estimated using $\Nens$ Monte Carlo samples
    $\{\design[j]\}_{j=1}^{\Nens} \sim \CondProb{\design}{\hyperparam}$:
    \begin{equation}\label{eqn:kernel_stochastic_gradient_Bernoulli}
      \nabla_{\hyperparam} \stochobj(\hyperparam)
      =
      \Expect{}{
        \obj(\design) \nabla_{\hyperparam} \log  \CondProb{\design}{\hyperparam}
      }
      \approx
        \frac{1}{\Nens} \sum_{j=1}^{\Nens}
          \obj(\design[j]) \nabla_{\hyperparam} \log  \CondProb{\design[j]}{\hyperparam} \,.
    \end{equation}
    A key property is that this gradient estimator requires only black-box evaluations of $\obj$:
    the cost of the stochastic gradient step is dominated by the cost of evaluating $\obj$
    at the $\Nens$ sampled designs, making surrogate or reduced-order model acceleration
    directly applicable \cite{FrangosMarzoukWillcoxEtAl11}.
    We refer the reader to \cite{attia2022stochastic} for the full derivation,
    convergence analysis, and algorithmic details of this unconstrained framework.

    \subsection{Modeling budget constraints}
      \label{subsec:modeling_constraints}
      Enforcing hard constraints requires modeling $\design$ as a conditional random
      variable whose support is restricted to the feasible region.
      Specifically, to employ \eqref{eqn:stochastic_optimization} for
      solving \eqref{eqn:binary_optimization_of_interest}, $\design$ must follow a
      conditional distribution
      $\CondProb{\design}{\hyperparam;\, \design\in\designdomain\subseteq\{0, 1\}^{\Nsens}}$.
      Deriving such a conditional distribution is nontrivial, and different constraint
      types generally require different models.
      In this work we focus on budget constraints suitable for decision-making, 
      sensor placement, and OED. 
      The requisite conditional probability models are developed in \Cref{sec:probability_models},
      and the probabilistic optimization problem employing these probability models
      is then presented in \Cref{sec:probabilistic_optimization}.

  \section{Probability Models} 
    \label{sec:probability_models}
    This section is self-contained, and 
    the reader interested only in the proposed optimization approach can 
    skip to  \Cref{sec:probabilistic_optimization} and 
    refer back to 
    this section as needed.
    Here we develop the probability models:
    \begin{equation}\label{eqn:probability_models}
      \CondProb{\designsum}{\hyperparam}\,; \quad
      \CondProb{\design}{\designsum;\, \hyperparam}\,; \qquad 
      \designsum=\wnorm{\design}{0}=\sum_{i=1}^{\Nsens}{\design_i} \,,
    \end{equation}
    where  
    $\design=(\design_{1}, \ldots, \design_{\Nsens})\tran$ is 
    a collection of binary random variables (Bernoulli trials)
    with success probabilities 
    $\hyperparam_{j}\in[0, 1], j=1, \ldots,\Nsens$.

    First, $\CondProb{\designsum}{\hyperparam}$ describes 
    the probability of achieving a total number of successes (number of entries equal to $1$) 
    in the $\Nsens$ trials. 
    Since the success probabilities are not necessarily 
    identical, this distribution is described by the Poisson binomial (PB)
    model \cite{chen1997statistical}.
    Second, $\CondProb{\design}{\designsum;\, \hyperparam}$ 
    describes the probability of an instance of the binary variable $\design$, that is, a collection 
    of the $\Nsens$ binary trials, conditioned by their sum. 
    This distribution can be defined by extending the conditional Bernoulli (CB) 
    model \cite{chen1997statistical}.
    
    In the rest of this section we focus on 
    the PB (\Cref{subsec:Poisson-Binomial}) 
    and CB (\Cref{subsec:Conditional-Bernoulli}) models, respectively.
    For brevity, we suppress the dependence on $\hyperparam$ from all distributions
    and restore it explicitly only where the context requires it.
    Additionally, the proofs of theorems and lemmas 
    are provided in \Cref{app:probability_models_proofs}.

    \subsection{Preliminary relations}
    \label{subsec:Preliminary-Relations}
      The following combinatorial 
      \emph{R-function}
      \begin{equation}\label{eqn:R_function_and_Bernoulli_weights}
        R(k, A) := \sum_{\substack{B\subseteq A \\ |B|=k}} \prod_{i \in B} w_i \,, \quad 
        w_i = \frac{\hyperparam_i}{1-\hyperparam_i}\,, \,\,
        i=1, \ldots,\Nsens \,; \,\,
        A \subseteq S:=\{1, 2, \ldots \Nsens\} \,,
      \end{equation}
      is typically used to define the PMFs of PB and CB models \eqref{eqn:probability_models}.
      Evaluating \eqref{eqn:R_function_and_Bernoulli_weights} by enumerating
      all possible combinations is computationally intractable for large $\Nsens$.
      It can be computed efficiently via the following recurrence
      relation \cite{chen1997statistical,kroese2013handbook}:
      \begin{subequations}\label{eqn:R_function_and_Bernoulli_weights_recurrence_relations}
        \begin{align}
          \label{eqn:R_function_and_Bernoulli_weights_recurrence_relation_2_formula}
            R(k, A) &= R(k, A\setminus \{k\}) + w_k R(k-1, A\setminus \{k\}) \,;\qquad k=2, \ldots, \abs{A}  \\
            \label{eqn:R_function_and_Bernoulli_weights_recurrence_relation_1_IC}
            R(1, A) &= \sum_{\substack{B\subseteq A \\ |B|=1}} \prod_{i \in B} w_i
           = \sum_{\substack{B\subseteq \{\{1\}, \ldots,\{\Nsens\} \}}} \prod_{i \in B} w_i
           = \sum_{i \in A} w_i
           = \sum_{i\in A} \frac{\hyperparam_i}{1-\hyperparam_i}  \\
            R(0, A)&=1\,;\qquad 
              R(k, A)=0\,\, \forall \, k<0 \text{ or } k>\abs{A} \,.
        \end{align}
      \end{subequations}
      %

      \subsubsection{Inclusion probability.}
      \label{subsubsec:inclusion_probability}
        %
        The $k$th-order inclusion (coverage) probability 
        is the probability that the set of unique 
        $k$ units $\{i_1, \ldots, i_k\}$ is included in a sample of size 
        $\designsumval$ drawn without replacement from a population of 
        size $\Nsens$ \cite{sunter1986solutions}, and is given by
        \begin{equation}
          \pi_{i_1, \ldots, i_k} 
            := \left( \prod_{t=1}^{k} w_{i_t} \right)
            \frac{
               R(\designsumval\!-\!k,\, S\!\setminus\! \{i_1, \ldots, i_k\})
            }{
              R(\designsumval,\, S)
            } \,;\quad i_1, \ldots, i_k\in S=\{1, \ldots, \Nsens\} 
            \,,
        \end{equation}
        where $A\!\setminus\!\{B\}$ is the set difference excluding elements of $B$ from $A$. 
        For example, the first- and second-order inclusion probabilities, 
        respectively, are given by
        \begin{subequations}\label{eqn:first_second_inclusion}
          \begin{align}
            \pi_i &= w_{i}
              \frac{R(\designsumval\!-\!1,\, S\!\setminus\!  \{i\}) }{ R(\designsumval,\, S) } \,; \\ 
            \pi_{i, j} &= w_{i} w_{j}
              \frac{R(\designsumval\!-\!2,\, S\!\setminus\!  \{i, j\}) }{ R(\designsumval,\, S) } \,,\,\,
                i\neq j,\,\, i,j\in S \,.  
          \end{align}
        \end{subequations}
        %

    \subsection{Poisson binomial distribution}
    \label{subsec:Poisson-Binomial}
      %
      The PB model 
      is given by \cite{chen1997statistical} 
      \begin{equation}\label{eqn:Poisson_Binomial_PMF}
        \Prob(\designsum=\designsumval)= 
          \sum_{\substack{B\subseteq A \\ |B|=\designsumval}} 
            \prod_{i \in B} \hyperparam_i 
            \prod_{j \in B^c} (1 \!-\! \hyperparam_j)  
            = R(\designsumval, S) \prod_{i=1}^{\Nsens} (1\!-\!\hyperparam_i) 
            = \frac{R(\designsumval, S) }{\prod_{i=1}^{\Nsens} (1\!+\!w_i) } 
            \,, 
      \end{equation}
      where  
      $
        \designsum 
          :=\wnorm{\design}{0}=\sum_{i=1}^{\Nsens}{\design_i}
      $. 
      The PB model \eqref{eqn:Poisson_Binomial_PMF} is defined only for non-degenerate
      probabilities $\hyperparam_i\in(0,1)$ and is not addressed in the literature for
      degenerate cases $\hyperparam_i\in\{0,1\}$.
      We extend it here to handle degenerate probabilities, which is essential for the
      proposed optimization approach since the boundary values $\hyperparam_i\in\{0,1\}$
      are naturally reached during optimization and require evaluation of both the PMF
      and its one-sided derivatives. 
      To this end, we start with \Cref{lemma:PB_Model_Degenerate}.

      \begin{lemma}\label{lemma:PB_Model_Degenerate}
        The PB model \eqref{eqn:Poisson_Binomial_PMF} satisfies the following identities.
        \begin{subequations}\label{eqn:PB_Model_Degenerate}
          \begin{align}
            \label{eqn:PB_Model_Degenerate_1}
            \CondProb{\designsum\!=\!\designsumval}{\hyperparam_i\!=\!0} 
              &= 
              \frac{
                R(\designsumval, S\setminus\{i\}) 
              }{
                \prod_{\substack{j=1\\j\neq i}}^{\Nsens} (1+w_j)
              } 
              \,,  \\
            \label{eqn:PB_Model_Degenerate_2}
            \CondProb{\designsum\!=\!\designsumval}{\hyperparam_i\!=\!1} 
              &= 
              \frac{
                R(\designsumval-1, S\setminus\{i\}) 
              }{
                \prod_{\substack{j=1\\j\neq i}}^{\Nsens} (1+w_j)
              }
              \,,
              \\
            \label{eqn:PB_Model_Degenerate_3}
            \lim_{\hyperparam_i \rlim 0 } \Prob({\designsum=\designsumval})
              &= \CondProb{\designsum=\designsumval}{\hyperparam_i=0} 
              \,, \\
            \label{eqn:PB_Model_Degenerate_4}
            \lim_{\hyperparam_i \llim 1 } \Prob({\designsum=\designsumval})
              &= \CondProb{\designsum=\designsumval}{\hyperparam_i=1} 
              \,.
          \end{align}
        \end{subequations}
      \end{lemma}
      \begin{proof}
        See \Cref{app:probability_models_proofs}.
      \end{proof}

      \Cref{lemma:PB_Model_Degenerate} shows that 
      the PB model is continuous at
      the corners of the domain of the parameter, 
      that is $\hyperparam \in\{0, 1\}^{\Nsens}$.
      Moreover, it provides a rigorous definition of the PMF for the 
      degenerate case where the weights $w_i$ employed in the R-function in 
      \eqref{eqn:Poisson_Binomial_PMF} 
      are not defined. 
      \Cref{theorem:PB_Model_Full} employs \Cref{lemma:PB_Model_Degenerate} to generalize 
      the PB model to allow for degenerate success probabilities.

      \begin{theorem}\label{theorem:PB_Model_Full}
      \begin{subequations}\label{eqn:PB_Model_Full}
        Consider a Bernoulli random vector $\design\in\{0, 1\}^{\Nsens}$,  
        and let 
        \begin{equation}\label{eqn:indices_set}
          \begin{aligned}
            S &= \{1, \ldots, \Nsens\}\,, \qquad \qquad
            O = \{ i\in S \,|\, \hyperparam_i\!=0\}\,, \\
            I &= \{ i\in S \,|\, \hyperparam_i=1\}\,, \qquad \,
            V = S\setminus\{I\cup O\} \,, 
          \end{aligned}
        \end{equation}
        then the PMF of the PB model over the success probability domain 
        $\hyperparam\in[0, 1]^{\Nsens}$ 
        is 
        \begin{equation}\label{eqn:PB_Model_Full_PMF} 
          \Prob({\designsum=\designsumval}) 
          =
            R(\designsumval-\abs{I}, V) 
              \prod_{\substack{j\in V}} (1-\hyperparam_j)
          =
          \frac{  R(\designsumval-\abs{I}, V) 
              }{\prod_{\substack{j\in V}} (1+w_j)
              }
            \,; \quad
              \designsum 
                =\sum_{i=1}^{\Nsens}\design_i
              \,.
        \end{equation}
      \end{subequations}
      \end{theorem}
      \begin{proof}
        See \Cref{app:probability_models_proofs}.
      \end{proof}

    \subsection{Conditional Bernoulli distribution}
    \label{subsec:Conditional-Bernoulli}
      The PMF of the CB model for non-degenerate success probabilities
      $\hyperparam_i\in(0, 1);\, i=1, \ldots, \Nsens$
      is given by \cite{chen2000general} 
      \begin{equation}\label{eqn:CB_Model_PMF}
        \CondProb{\design}{\designsum\!=\!\designsumval}
          = \frac{
            \prod\limits_{i=1}^{\Nsens}{{w_i}^{\design_i}}
          }{
            \sum\limits_{\substack{\vec{d}\in\{0,1\}^{\Nsens} \,; 
              \wnorm{\vec{d}}{0}=\designsumval}} 
                \prod_{i=1}^{\Nsens}{w_i}^{d_i} 
        }
        = \frac{
            \prod\limits_{i=1}^{\Nsens}{{w_i}^{\design_i}}
          }{
            R(\designsumval, S)
          } 
          \,; 
          \quad
          \designsum
            =\sum_{i=1}^{\Nsens}\design_i 
          \,.
      \end{equation}
      \begin{lemma}\label{lemma:CB_Model_Degenerate}
        The CB model \eqref{eqn:CB_Model_PMF} satisfies the following identities.
        \begin{subequations}
          \begin{align}
            \label{eqn:CB_Model_Degenerate_1}
            \CondProb{\design}{\hyperparam_i\!=\!0;\designsum\!=\!\designsumval} 
              &= 
                \begin{cases}  
                  \frac{
                    \prod_{\substack{j=1\\j\neq i}}^{\Nsens} w_j^{\design_j} 
                  }{
                    R(\designsumval, S\!\setminus\!\{i\}) 
                  } 
                  \!&,\, \design_i\!=\!0
                  \\
                  0 \!&,\, \design_i\!=\!1
                \end{cases} 
                \,, \\
             \label{eqn:CB_Model_Degenerate_2}
            \CondProb{\design}{\hyperparam_i\!=\!1;\designsum\!=\!\designsumval} 
              &= 
                \begin{cases}  
                  0 
                  &,\, \design_i\!=\!0
                  \\
                  \frac{
                    \prod_{\substack{j=1\\j\neq i}}^{\Nsens} w_j^{\design_j} 
                  }{
                    R(\designsumval\!-\!1, S\setminus\{i\}) 
                  } 
                  &,\, \design_i\!=\!1
                \end{cases} \,,
              \\
            \label{eqn:CB_Model_Degenerate_3}
            \lim_{\hyperparam_i \rlim 0 } 
              \CondProb{\design}{\designsum=\designsumval} 
              &= \CondProb{\design}{\hyperparam_i=0;\designsum=\designsumval} 
                \,, \\
            \label{eqn:CB_Model_Degenerate_4}
            \lim_{\hyperparam_i \llim 1 } 
              \CondProb{\design}{\designsum=\designsumval} 
               &= \CondProb{\design}{\hyperparam_i=1;\designsum=\designsumval} 
              \,.
          \end{align}
        \end{subequations}
      \end{lemma}
      \begin{proof}
        See \Cref{app:probability_models_proofs}.
      \end{proof}

      \Cref{theorem:CB_Model} applies  
      \Cref{lemma:CB_Model_Degenerate} recursively to 
      handle degenerate probabilities in a CB model.
      Specifically, $\designsumval$ is reduced by the number of 
      entries in $\hyperparam$ equal to $1$, and 
      $\Nsens$ is reduced by 
      removing degenerate ($0/1$ probability) entries from $\design,\hyperparam$.
      \begin{theorem}\label{theorem:CB_Model}
        The PMF of the CB model over the closed 
        interval $\hyperparam\in[0, 1]^{\Nsens}$ 
        is 
        \begin{equation}\label{eqn:CB_Model_Full_PMF}
          \CondProb{\design}{\designsum\!=\!\designsumval}
            =
            \begin{cases}
              \frac{
                \prod\limits_{j\in V} {w_j}^{\design_j}
                }{
                  R(\designsumval - |I|, V)
                } \quad &;
                \design_j\!=\!\hyperparam_j \,\, \forall j\in \{I\!\cup\! O\},\, \text{and } 
                \sum\limits_{j\in V} \design_j \!=\! \designsumval\!-\!|I| \,,
                \\
              \qquad 0  &; o.w.,
            \end{cases}
        \end{equation}
        where 
        $
          \designsum 
          =\sum_{i=1}^{\Nsens}{\design_i},\,  
          w_j = \frac{\hyperparam_j}{1-\hyperparam_j}
        $, 
        and 
        $S,\, O,\, I,\, V$ are given by \eqref{eqn:indices_set}. 
      \end{theorem}
      \begin{proof}
        See \Cref{app:probability_models_proofs}.
      \end{proof}
      \begin{proposition}\label{proposition:CB_Model_moments}
        Let $\design\sim\CondProb{\design}{\designsum=\designsumval}$ be modeled by the CB model
        \eqref{eqn:CB_Model_Full_PMF}, 
        then
        \begin{equation} \label{eqn:CB_Model_moments_1}
          \Expect{}{\design_{i}} = \pi_i 
            \,; \,\,\,
            \Expect{}{ \design_{i} \design_{j}}
              =  \begin{cases} 
                \pi_{i}& ; i\!=\!j \\
                \pi_{ij}&; i\!\neq\! j 
              \end{cases}
            \,; \,\,\,
            \brCov{\design_{i}, \design_{j}}
              =  \begin{cases} 
                \pi_{i} - \pi_{i}^2&; i\!=\!j \\
                \pi_{i, j} - \pi_{i}\pi_{j}&; i\!\neq\! j 
              \end{cases} \,,
        \end{equation}
        where $\pi$ and $\pi_{i, j}$ are the first- and the second-order
        inclusion probabilities \eqref{eqn:first_second_inclusion}.
      \end{proposition}
      \begin{proof}
        See \Cref{app:probability_models_proofs}.
      \end{proof}

    \subsection{Generalized conditional Bernoulli  model}
    \label{subsec:Generalized-Conditional-Bernoulli}
      \Cref{theorem:CB-Multiple-Sums} introduces the generalized CB (GCB) 
      model which replaces the scalar $\designsumval$ in 
      the CB model 
      \eqref{eqn:CB_Model_Full_PMF} with a set of admissible values
      $\designsum\in\designsumset:=\{\designsumval_{1},\ldots,\designsumval_{m}\}$.
      \Cref{theorem:CB-Multiple-Sums} enables 
      evaluating the PMF and the moments of the GCB model by employing
      the PB and the CB models.

      \begin{theorem}\label{theorem:CB-Multiple-Sums} 
        Consider the set 
        $
          \designsumset:=\{\designsumval_{1}, \ldots, \designsumval_{m}\}
            \subseteq \{0, 1, \ldots, \Nsens\} .
        $
        Then,
        \begin{subequations}\label{eqn:GCB_Model-identities}\allowdisplaybreaks
          \begin{align} 
            \label{eqn:GCB_Model_PMF}
            \CondProb{\design}{\designsum\in\designsumset}
              &= \frac{
                1
                }{
                \sum\limits_{\designsumval\in \designsumset} 
                  \Prob({\designsumval})
                }
                \sum\limits_{\designsumval\in \designsumset} 
                  \CondProb{\design}{\designsum=\designsumval} 
                    \Prob({\designsumval})
                \,,  \\
            \label{eqn:GCB_Model-Function-Expect}
            \CondExpect{f(\design)}{\designsumset}
              &=
              \frac{1}{
                \sum\limits_{\designsumval\in \designsumset} 
                  \Prob({\designsumval})
                }
                \sum\limits_{\designsumval\in \designsumset} 
                  \CondExpect{f(\design)}{\designsumval}
                  \Prob({\designsumval})  \,, \\
            \label{eqn:GCB_Model-Function-Var}
            \CondVar{f(\design)}{\designsumset}
              &\!=
                  \frac{
                    \sum\limits_{\designsumval\in \designsumset}\! 
                      \CondExpect{f(\design)^2}{\designsumval}
                      \Prob({\designsumval})  
                  }{
                    \sum\limits_{\designsumval\in \designsumset} 
                      \Prob({\designsumval})
                  } 
                  -
                  \frac{
                    \sum\limits_{i=1}^{m} 
                    \sum\limits_{j=1}^{m} \!
                      \CondExpect{f(\design)}{\designsumval_i}
                      \CondExpect{f(\design)}{\designsumval_j}
                      \Prob({\designsumval_i}) 
                      \Prob({\designsumval_j}) 
                  }{
                    \left(
                      \sum\limits_{\designsumval\in \designsumset} 
                        \Prob({\designsumval})
                    \right)^2
                  }  
            ,
          \end{align}
        \end{subequations}
        where 
        $f:\{0, 1\}^{\Nsens}\rightarrow \Rnum$, 
        $\CondProb{\design}{\designsumset} \equiv \CondProb{\design}{\designsum\in\designsumset}$, 
        and 
        $ \Prob({\designsumval})\equiv \Prob({\designsum=\designsumval})$ and  
        $ \CondProb{\design}{\designsumval} \equiv \CondProb{\design}{\designsum=\designsumval}$ are the PMFs of the 
        PB \eqref{eqn:PB_Model_Full} 
        and the 
        CB \eqref{eqn:CB_Model_Full_PMF} 
        models, respectively.
      \end{theorem}
      \begin{proof}
        See \Cref{app:probability_models_proofs}.
      \end{proof}
      
      \subsection{Gradient information} 
      \label{subsec:probability_models_gradients}
        Here we describe the derivatives of the probability
        models in \Cref{sec:probability_models} with respect to their parameters.
        These derivatives yield the score function
        $\nabla_{\hyperparam}\log\CondProb{\design}{\cdot}$,
        which is the key quantity required for evaluating the stochastic gradient
        of the probabilistic objective in \Cref{sec:probabilistic_optimization}.

        \begin{proposition}\label{proposition:probability_models_derivatives}
          The models PB \eqref{eqn:PB_Model_Full}, 
          CB \eqref{eqn:CB_Model_Full_PMF}, and 
          GCB \eqref{eqn:GCB_Model_PMF} 
          satisfy,
          \begin{subequations}\label{eqn:probability_models_derivatives}\allowdisplaybreaks

            \begin{equation} \label{eqn:PB_Model_Full_Gradient} 
              \del{\Prob({\designsum\!=\!\designsumval})}{\hyperparam_i}
                =
                \begin{cases}
                  \left(
                      R\left(z\!-\!\abs{I}\!-\!1, V\right) 
                      -  R\left(z\!-\!\abs{I}, V\right) 
                    \right)
                    \prod\limits_{\substack{j\in V}} (1\!-\!\hyperparam_j)
                  &; \,  0 \leq \hyperparam_i < 1 \,,  \\
                  \left(
                      R\left(z\!-\!\abs{I}\!, V\right) 
                      -  R\left(z\!-\!\abs{I}+\!1, V\right) 
                  \right)
                  \prod\limits_{\substack{j\in V}} (1\!-\!\hyperparam_j)
                  \,&; \,  \hyperparam_i = 1 \,,  \\
                \end{cases}
                \,,
            \end{equation}

            \begin{equation}\label{eqn:CB_Model_Full_Gradient}
              \del{
                  \CondProb{\design}{\designsum\!=\!\designsumval}
                }{\hyperparam_i}
                =
                \begin{cases}
                  \CondProb{\design}{\designsum\!=\!\designsumval}
                    \frac{(1+w_i)^2}{w_i}
                    \Bigl(
                      \design_i
                      \!-\!
                      \frac{
                        w_i \, R(\designsumval - |I|-1, V\setminus \{i\} )
                      }{
                        R(\designsumval - |I|, V\setminus \{i\})
                      }
                    \Bigr)
                  &; \, 0 < \hyperparam_i < 1 \,,  \\[1.0em]
                  \left.
                  \begin{array}{ll}
                    \frac{
                      - R(\designsumval - |I|-1, V)
                    }{
                      \left(
                        R(\designsumval - |I|, V)
                      \right)^2
                    }
                      \prod\limits_{j\in V} w_j^{\design_j}
                    \,\,&; \design_i=0 \\[1.0em]
                    \frac{
                        1
                      }{
                          R(\designsumval - |I|, V)
                      }
                      \prod\limits_{j\in V} w_j^{\design_j}
                    \,\,&;  \design_i=1
                  \end{array}
                  \right\}
                  &; \,  \hyperparam_i = 0 \,,  \\[1.0em]
                  \left.
                  \begin{array}{ll}
                    \frac{
                      -1
                    }{
                        R(\designsumval - |I|, V)
                    }
                    \prod\limits_{j\in V} w_j^{\design_j}
                    \,\,&; \design_i=0 \\[1.0em]
                    \frac{
                      R(\designsumval - |I|+1, V)
                    }{
                      \left(
                        R(\designsumval - |I|, V)
                      \right)^2
                    }
                      \prod\limits_{j\in V} w_j^{\design_j}
                    \,\,&;  \design_i=1
                  \end{array}
                  \right\}
                  &; \, \hyperparam_i = 1 \,,
                \end{cases}
                \,, 
            \end{equation}

            \begin{equation} \label{eqn:GCB_Model-LogProb-Grad}
            \nabla_{\hyperparam} \CondProb{\design}{\designsum\in\designsumset}
              =\CondProb{\design}{\designsum\in\designsumset} \left(
                \frac{
                  \sum\limits_{\designsumval\in \designsumset}
                    \CondProb{\design}{\designsum=\designsumval}
                    \nabla_{\hyperparam}\Prob({\designsumval})
                  \!+\!
                  \sum\limits_{\designsumval\in \designsumset}
                    \Prob({\designsumval})
                    \nabla_{\hyperparam}\CondProb{\design}{\designsum=\designsumval}
                }{
                  \sum\limits_{\designsumval\in \designsumset}
                    \CondProb{\design}{\designsum=\designsumval}
                    \Prob({\designsumval})
                }  
                -
                \frac{
                  \sum\limits_{\designsumval\in \designsumset} 
                  \nabla_{\hyperparam} \Prob({\designsumval})
                }{
                  \sum\limits_{\designsumval\in \designsumset} 
                    \Prob({\designsumval})
                } 
              \right)
              \,. 
            \end{equation}

          \end{subequations}

        \end{proposition}
        \begin{proof}
          See \Cref{app:probabilistic_optimization_proofs}.
        \end{proof}

      \subsection{Sampling} \label{subsec:sampling}
        Here we discuss sampling from the probability models in
        \Cref{sec:probability_models}.
        Efficient sampling is essential for the proposed approach, as each iteration
        of \Cref{alg:probabilistic_binary_optimization} requires drawing a batch of
        feasible binary designs from the current CB or GCB model to estimate the
        stochastic gradient.
        \subsubsection{Sampling the PB model.}
        \label{subsubsec:Poisson_binomial_sampling}
        The PB model \eqref{eqn:PB_Model_Full}  
        is sampled by weighted random sampling 
        $\designsum\in\{0, \ldots, \Nsens\}$, with replacement, with weights 
        \eqref{eqn:PB_Model_Full}.
        \subsubsection{Sampling the CB model.}
          \label{subsubsec:CB_Model_sampling}
          For completeness, here we summarize an approach for sampling the CB model; 
          see \cite[Appendix A]{heng2020simple} as described by \Cref{alg:CB_Model_sampling}.  

          \begin{algorithm}[htbp!]
            \caption{
              Generate a sample from the CB 
              model \eqref{eqn:CB_Model_Full_PMF}.
            }\label{alg:CB_Model_sampling}
            
            \begin{algorithmic}[1] 
           
              \Require{
                Distribution parameters 
                $\hyperparam,\, \designsumval$, and sample size $\Nens$.
              }
              \Ensure{
                A sample 
                $ \{
                    \design^{(i)}\in\{0, 1\}^{\Nsens}
                      \sim \CondProb{\design}{\hyperparam;\,\designsum\!=\!\designsumval}
                    | i=1, \ldots, \Nens
                \}
                $ drawn from \eqref{eqn:CB_Model_Full_PMF}
              }

              \State Calculate the matrix $q$  using \eqref{eqn:q_matrix}

              \State Initialize a sample $S=\{\}$.

              \For {$i$ $\gets 1$ to $\Nens$}
                
                \State Initialize $\design^{(i)}\in\{0, 1\}^{\Nsens}$ 
                
                \For {$j$ $\gets 1$ to $\Nsens$}
                  \State Calculate $p_j$ using \eqref{eqn:CB_sequential_prob}
                  \State Sample a uniform random value $u_i \sim \mathcal{U}(0, 1)$ 
                  \State \textbf{if} {$u_j >= p_j$} \textbf{then} {$\design^{(i)}_j \gets 1$} \textbf{else} {$\design^{(i)}_j \gets 0$}
                \EndFor

                \State Update $S \gets S \cup \{\design^{(i)}\}$
              \EndFor
         
              \State \Return {$S$ }
              
            \end{algorithmic}
          \end{algorithm}
          \Cref{alg:CB_Model_sampling} samples 
          $\design_i, \, i=1, \ldots, \Nsens$ sequentially 
          by employing the inclusion probability of the $j$th entry, 
          \begin{subequations}
            \begin{equation} 
              \CondProb{\design_j=1 }{\design_1, \ldots, \design_{j-1},\, \designsum=\designsumval} 
                = 
                \frac{
                  q(\designsumval-\sum_{m=1}^{j-1}\design_{m}  \,,\,\,  j+1) \, \hyperparam_j
                }{
                  q(\designsumval+1-\sum_{m=1}^{j-1}\design_{m} \,,\,\,  j)
                  }\,,\label{eqn:CB_sequential_prob}  
              \end{equation}
              where for 
              $
                i = 1, \ldots, \designsumval+1 
              $, 
              $
                j = 1, \ldots, \Nsens 
              $, the matrix valued function $q(i, j)$ is given by
              \begin{equation}\label{eqn:q_matrix}
              q(i, j) 
                = 
                \begin{cases}
                  \prod_{m=j}^{\Nsens} (1-\hyperparam_m) &\text{if }  i=1\,, \\
                  \hyperparam_{\Nsens}  &\text{if } i=2, j=\Nsens \,, \\
                  0   &\text{if }  i >\Nsens - j +2 \,,\\ 
                  \hyperparam_j q(i-1, j+1) + (1-\hyperparam_j) \, q(i, j+1)  &\text{otherwise}\,,
                \end{cases} ;
            \end{equation}
          \end{subequations}
        \subsubsection{Sampling the GCB model.}
          \label{subsubsec:GCB_Model_sampling}
          Generating a random sample of size $\Nens$ from the generalized
          CB model \eqref{eqn:GCB_Model_PMF} is described by \Cref{alg:GCB_Model_sampling},
          which proceeds in two steps.
          First, a set of $\Nens$ realizations of the parameter 
          $\designsum$ is drawn with replacement from the set 
          $\designsumset:=\{\designsumval_{1},  \ldots, \designsumval_{m}\}$
          by using weighted random sampling 
          with weights equal to the probabilities dictated by 
          \eqref{eqn:Poisson_Binomial_PMF}, 
          that is, 
          $
            \{
              \CondProb{\designsum=\designsumval_{1}}{\hyperparam}, 
              \ldots, 
              \CondProb{\designsum=\designsumval_{m}}{\hyperparam}
            \}
          $. 
          Second, for each realization $\designsum=\designsumval\in\designsumset$,
          use \Cref{alg:CB_Model_sampling} to 
          draw a sample from the CB model with parameters 
          $\hyperparam,\, \designsumval$. 

          \begin{algorithm}[htbp!]
            \caption{
              Generate a sample from the GCB model \eqref{eqn:GCB_Model_PMF}.
            }\label{alg:GCB_Model_sampling}
            
            \begin{algorithmic}[1] 
           
              \Require{
                Distribution parameters 
                $\hyperparam,\, \designsumset:=\{\designsumval_{1},  \ldots, \designsumval_{m}\}$, 
                and sample size $\Nens$.
              }
              \Ensure{
                A sample 
                $ \{
                    \design^{(i)}\in\{0, 1\}^{\Nsens}
                      \sim \CondProb{\design}{\hyperparam;\,\designsum\!\in\!\designsumset}
                    | i=1, \ldots, \Nens
                \}
                $ drawn from \eqref{eqn:GCB_Model_PMF}
              }

              \State Calculate weights/probabilities 
              $
                W = \{\CondProb{\designsumval_i}{\hyperparam} |i=1,\ldots,m\}
              $ using \eqref{eqn:Poisson_Binomial_PMF}
              
              \State Sample 
              $ 
                \widehat{\designsumset}=\{\widehat{\designsumval}_i|i=1, \ldots, \Nens\}
              $ with replacement from $\designsumset$ using weighted random sampling with weights $W$

              \State Extract unique sample sizes 
                $\widetilde{\designsumset}=\{\widetilde{\designsumval}_i\in\widehat{\designsumset}|i=1, \ldots, r\}$
                and calculate 
                $N:=\{N_{i}|i=1, \ldots, r\}$, the number of times each $\widetilde{\designsumval}_i$ 
                appears in $\widehat{\designsumset}$ 
              
              \State Initialize a sample $S=\{\}$.

              \For {$i$ $\gets 1$ to $r$}

                  \State Generate a sample $\widetilde{S}$ of size $N_i$ from \eqref{eqn:CB_Model_Full_PMF}
                    with parameters $\hyperparam$, $\widetilde{\designsumval}_i$
                    \Comment{Use \Cref{alg:CB_Model_sampling}}
              
                  \State Update $S \gets S \cup \widetilde{S}$
              \EndFor

              \State \Return {$S$ }
              
            \end{algorithmic}
          \end{algorithm}

  \section{Probabilistic Black-Box Binary Optimization} 
    \label{sec:probabilistic_optimization}
    Here we describe our fully probabilistic approach for solving 
    \eqref{eqn:binary_optimization_of_interest}.
    \Cref{subsec:probabilistic_optimization_equality} 
    considers 
    budget 
    equality constraint, and 
    \Cref{subsec:probabilistic_optimization_inclusion} 
    addresses the case of budget inclusion 
    (e.g., inequality) constraint.
    For clarity, the proofs 
    are provided in \Cref{app:probabilistic_optimization_proofs}.

    \subsection{Budget equality constraint}
    \label{subsec:probabilistic_optimization_equality}
      Here we consider binary optimization problems with a hard equality budget constraint,
      i.e., the design must activate \emph{exactly} $\designsumval$ out of $\Nsens$ candidates.
      This is the canonical sensor placement setting where the number of sensors is fixed.
      We replace the binary problem with a stochastic optimization over the parameters
      of the CB model (\Cref{subsec:Conditional-Bernoulli}),
      whose support is restricted to the feasible set $\{\design : \wnorm{\design}{0} = \designsumval\}$
      by construction.
      Specifically, the problem of interest is:
      \begin{equation}\label{eqn:binary_optimization_budget_equality_constraint}
        \design\opt \in \argmax_{\design}{\obj(\design) }
          \quad \textrm{s.t.} \quad   
            \design \in \{0, 1\}^{\Nsens} \,, \quad \wnorm{\design}{0} = \designsumval
            \,.
      \end{equation}

      We view $\design$ as a random variable following the CB model \eqref{eqn:CB_Model_Full_PMF}, 
      and replace \eqref{eqn:binary_optimization_budget_equality_constraint} 
      with the following stochastic optimization problem:
      \begin{subequations}\label{eqn:overall_probabilistic_equality}
      \begin{equation}\label{eqn:probabilistic_optimization_budget_equality_constraint}
        \hyperparam\opt \in \argmax_{\hyperparam \in [0, 1]^{\Nsens} }
          \stochobj(\hyperparam)
        := \Expect{
             \design\sim\CondProb{\design}{\designsum=\designsumval}
           }{\obj(\design)}
        \,,
      \end{equation}
      whose gradient (derivative with respect to the distribution parameter $\hyperparam$) 
      is given by
      \begin{equation}\label{eqn:budget_equality_constraint_exact_gradient}
        \begin{aligned}
          \vec{g}(\hyperparam) 
            &:= 
            \nabla_{\hyperparam}\, 
              \Expect{
                \design\sim \CondProb{\design}{\designsum=\designsumval}
              }{\obj(\design)} 
            = \Expect{
                \design\sim \CondProb{\design}{\designsum=\designsumval}
              }{
                \obj(\design) 
                \nabla_{\hyperparam} \log \CondProb{\design}{\designsum=\designsumval} 
              } 
            \,,
        \end{aligned}
      \end{equation}
      where independence of $\obj$ from $\hyperparam$, 
      linearity of the expectation, 
      and the log-derivative trick
      ($\nabla_{\hyperparam}\Prob = \Prob \nabla_{\hyperparam}\log\Prob$) are 
      used to obtain the last term.
      To solve \eqref{eqn:probabilistic_optimization_budget_equality_constraint} numerically, we employ 
      the following stochastic approximation 
      of the gradient \eqref{eqn:budget_equality_constraint_exact_gradient}, 
      \begin{equation}\label{eqn:budget_equality_constraint_stochastic_gradient}
        \vec{g}(\hyperparam) 
        \approx
        \widehat{\vec{g}}(\hyperparam)
          := \frac{1}{\Nens} \sum_{k=1}^{\Nens} \obj(\design[k]) \,
            \frac{
          \nabla_{\hyperparam} \CondProb{\design[k]}{\designsum=\designsumval}
        }{
          \CondProb{\design[k]}{\designsum=\designsumval} 
        }
      \end{equation}
      where 
      $
      \{
        \design[k]
          \sim\CondProb{\design}{\designsum\!=\!\designsumval}
            ;\, k\!=\!1, \ldots \Nens
      \} 
      $
      is a sample drawn from 
      the CB model 
      \eqref{eqn:CB_Model_Full_PMF}, and 
      $\nabla_{\hyperparam} 
        \CondProb{\design[k]}{\designsum=\designsumval}
      $ is given (elementwise) by \eqref{eqn:CB_Model_Full_Gradient}. 
      The stochastic gradient
      \eqref{eqn:budget_equality_constraint_stochastic_gradient} is unbiased.
      However, due to high variability of the stochastic gradient 
      \eqref{eqn:budget_equality_constraint_stochastic_gradient}, 
      in practice we always use the variance-reduced, baselined form
      \begin{equation}\label{eqn:budget_equality_constraint_stochastic_gradient_with_basesline}
        \widehat{\vec{g}}^{\rm b}
          = \frac{1}{\Nens} \sum_{k=1}^{\Nens} \left(\obj(\design[k])-\baseline\opt\right) \,
            \nabla_{\hyperparam} \log \CondProb{\design[k]}{\designsum=\designsumval}
        \,,
      \end{equation}
      where $\baseline\opt$ is the optimal baseline defined and derived in
      \Cref{subsec:variance_reduction}.
      
    \end{subequations}

    \subsection{Budget inclusion constraint}
    \label{subsec:probabilistic_optimization_inclusion}
      Here we consider the more general case in which the budget is allowed to
      take any value within a prescribed set $\designsumset \subseteq \{0, 1, \ldots, \Nsens\}$
      rather than being fixed to a single value.
      This includes inequality constraints (e.g., ``at most $\designsumval$ sensors'')
      and scenarios in which multiple budget levels are permissible.
      The probability model used here is the GCB model
      (\Cref{subsec:Generalized-Conditional-Bernoulli}), whose support spans all 
      feasible designs across
      all admissible budget values in $\designsumset$.
      Formally, the problem of interest is: 
      \begin{equation}\label{eqn:binary_optimization_budget_inclusion_constraint}
        \design\opt \in \argmax_{\design}{\obj(\design) }
          \quad \textrm{s.t.} \quad   
            \design \in \{0, 1\}^{\Nsens} \,, \quad 
            \wnorm{\design}{0} 
              \in\designsumset:=\{\designsumval_{1},\ldots,\designsumval_{m}\}
            \,.
      \end{equation}

      \begin{subequations}\label{eqn:overall_probabilistic_inclusion}
        Similar to the case of equality constraint discussed 
        in \Cref{subsec:probabilistic_optimization_equality}, 
        we replace the optimization 
        problem \eqref{eqn:binary_optimization_budget_inclusion_constraint} 
        with the following stochastic optimization problem:
        \begin{equation}\label{eqn:probabilistic_optimization_budget_inclusion_constraint}
          \hyperparam\opt \in \argmax_{\hyperparam \in [0, 1]^{\Nsens} }
            \stochobj(\hyperparam)
          := \Expect{
               \design\sim\CondProb{\design}{\designsum\in\designsumset}
             }{\obj(\design)}
          \,,
        \end{equation}
        with exact gradient $\vec{g}(\hyperparam) $ and a stochastic approximation 
        $\widehat{\vec{g}}(\hyperparam)$, respectively, given by 
        \begin{align}
          \vec{g}(\hyperparam) 
            &:= 
            \nabla_{\hyperparam}\, 
              \Expect{
                \design\sim \CondProb{\design}{\designsum\in\designsumset}
              }{\obj(\design)} 
            = \Expect{
                \design\sim \CondProb{\design}{\designsum\in\designsumset}
              }{
                \obj(\design) 
                \nabla_{\hyperparam} \log \CondProb{\design}{\designsum\in\designsumset} 
              } 
            \,,  \label{eqn:budget_equality_constraint_inclusion_gradient}  \\
          \widehat{\vec{g}}(\hyperparam)
            &= \frac{1}{\Nens} \sum_{k=1}^{\Nens} \obj(\design[k]) \,
              \frac{
                \nabla_{\hyperparam} \CondProb{\design[k]}{\designsum\in\designsumset}
              }{
                \CondProb{\design[k]}{\designsum\in\designsumset}
              }
              \,;  \quad
          \design[k]
            \sim\CondProb{\design}{\designsum\in\designsumset}
              \,,  \label{eqn:budget_inclusion_constraint_stochastic_gradient}
        \end{align}
        with
        $\nabla_{\hyperparam} \CondProb{\design[k]}{\designsum\in\designsumset}$
        given by \eqref{eqn:GCB_Model-LogProb-Grad}
        and
        $\CondProb{\design[k]}{\designsum\in\designsumset}$
        given by \eqref{eqn:GCB_Model_PMF}.
        Similar to \eqref{eqn:budget_equality_constraint_stochastic_gradient_with_basesline}, 
        in practice we always use the variance-reduced, baselined form
        \begin{equation}
          \label{eqn:budget_inclusion_constraint_stochastic_gradient_with_baseline}
          \widehat{\vec{g}}^{\rm b}
            = \frac{1}{\Nens} \sum_{k=1}^{\Nens} \left(\obj(\design[k])-\baseline\opt\right) \,
              \nabla_{\hyperparam} \log \CondProb{\design[k]}{\designsum\in\designsumset}
              \,,
        \end{equation}
        where $\baseline\opt$ is the optimal baseline defined in \Cref{subsec:variance_reduction}.

      \end{subequations}

  \subsection{Variance reduction of the stochastic gradient}
  \label{subsec:variance_reduction}

    The stochastic gradient estimators
    \eqref{eqn:budget_equality_constraint_stochastic_gradient} and
    \eqref{eqn:budget_inclusion_constraint_stochastic_gradient}
    are unbiased but may exhibit high variance, particularly early in
    optimization when the Bernoulli parameters $\hyperparam$ are far from
    degenerate.
    Subtracting a constant baseline $\baseline$ from $\obj(\design[k])$ in the
    estimator does not introduce any bias, because the score function has zero mean:
    \begin{equation}\label{eqn:score_zero_mean}
      \Expect{\design \sim \CondProb{\design}{\cdot}}{
        \nabla_{\hyperparam} \log \CondProb{\design}{\cdot}
      } = \vec{0} \,,
    \end{equation}
    where $\CondProb{\design}{\cdot}$ denotes either the CB model
    $\CondProb{\design}{\designsum=\designsumval}$ or the GCB model
    $\CondProb{\design}{\designsum\in\designsumset}$.
    This identity holds for any properly normalised probability model and is the
    fundamental property that makes baseline subtraction variance-reducing without
    introducing bias.
    Consequently, both baselined gradient estimators
    \eqref{eqn:budget_equality_constraint_stochastic_gradient_with_basesline} and
    \eqref{eqn:budget_inclusion_constraint_stochastic_gradient_with_baseline}
    are unbiased for any choice of $\baseline$.

    \begin{proposition}[Optimal scalar baseline]\label{prop:optimal_scalar_baseline}
      Among all constant baselines $\baseline \in \R$, the one that minimises
      the total variance
      $\brVar{\widehat{\vec{g}}^{\rm b}}
       = \sum_{i=1}^{\Nsens} \Var\!\left(\widehat{g}^{\rm b}_i\right)$
      of the baselined gradient estimator is
      \begin{equation}\label{eqn:optimal_scalar_baseline_theory}
        \baseline\opt
          = \frac{
              \Expect{}{
                \obj(\design)\,
                \bigl\|
                  \nabla_{\hyperparam} \log \CondProb{\design}{\cdot}
                \bigr\|^2
              }
            }{
              \Expect{}{
                \bigl\|
                  \nabla_{\hyperparam} \log \CondProb{\design}{\cdot}
                \bigr\|^2
              }
            } \,,
      \end{equation}
      where $\CondProb{\design}{\cdot}$ is either the CB or GCB model.
      The denominator equals $\Trace{\FisherI(\hyperparam)} > 0$
      for non-degenerate $\hyperparam$, where
      $\FisherI(\hyperparam) = \Expect{}{\vec{s}(\design)\,\vec{s}(\design)\tran}$
      is the Fisher information matrix of the probability model
      $\CondProb{\design}{\cdot}$ with respect to $\hyperparam$
      (not to be confused with the inference parameter $\iparam$ or the
      design-space Fisher information).
      The sample estimator of \eqref{eqn:optimal_scalar_baseline_theory} is
      \begin{equation}\label{eqn:optimal_scalar_baseline_sample}
        \widehat{\baseline}\opt
          = \frac{
              \sum_{k=1}^{\Nens}
                \obj(\design[k])\,
                \bigl\|\nabla_{\hyperparam}\log\CondProb{\design[k]}{\cdot}\bigr\|^2
            }{
              \sum_{k=1}^{\Nens}
                \bigl\|\nabla_{\hyperparam}\log\CondProb{\design[k]}{\cdot}\bigr\|^2
            } \,.
      \end{equation}
    \end{proposition}

    \begin{proof}
      Write $\vec{s}(\design) := \nabla_{\hyperparam}\log\CondProb{\design}{\cdot}$
      for the score vector and $s_i(\design)$ for its $i$-th component.
      For a single sample ($\Nens=1$), the variance of the $i$-th component of
      the baselined estimator is
      \begin{align*}
        \Var\!\bigl((\obj(\design)-\baseline)\,s_i(\design)\bigr)
        &= \Expect{}{\bigl(\obj(\design)-\baseline\bigr)^2 s_i(\design)^2}
           - \Bigl(\Expect{}{(\obj(\design)-\baseline)\,s_i(\design)}\Bigr)^2.
      \end{align*}
      Since $\Expect{}{s_i(\design)} = 0$ by \eqref{eqn:score_zero_mean},
      the second term equals $\bigl(\Expect{}{\obj(\design)\,s_i(\design)}\bigr)^2$,
      which does not depend on $\baseline$.
      Minimising over $\baseline$ is therefore equivalent to minimising
      \[
        V(\baseline)
          := \sum_{i=1}^{\Nsens}
             \Expect{}{\bigl(\obj(\design)-\baseline\bigr)^2 s_i(\design)^2}
           = \Expect{}{\bigl(\obj(\design)-\baseline\bigr)^2
               \bigl\|\vec{s}(\design)\bigr\|^2}.
      \]
      Expanding and differentiating with respect to $\baseline$:
      \[
        \frac{dV}{d\baseline}
          = -2\,\Expect{}{(\obj(\design)-\baseline)\,\bigl\|\vec{s}(\design)\bigr\|^2}
          = -2\,\Expect{}{\obj(\design)\,\bigl\|\vec{s}\bigr\|^2}
            + 2\baseline\,\Expect{}{\bigl\|\vec{s}\bigr\|^2}.
      \]
      Setting to zero and solving gives \eqref{eqn:optimal_scalar_baseline_theory}.
      This is a minimum because $d^2V/d\baseline^2 = 2\,\Expect{}{\|\vec{s}\|^2} > 0$
      for non-degenerate $\hyperparam$.
      The denominator in \eqref{eqn:optimal_scalar_baseline_theory} satisfies
      $\Expect{}{\|\vec{s}(\design)\|^2}
        = \Trace{\FisherI(\hyperparam)}$,
      since $\Expect{}{\vec{s}} = \vec{0}$ implies
      $\Expect{}{\|\vec{s}\|^2} = \Trace{\Var(\vec{s}})
       = \Trace{\FisherI(\hyperparam)}$.
      For the CB model, this trace admits the closed-form expression from
      \Cref{proposition:CB_Model_gradient_moments},
      $\Trace{\FisherI(\hyperparam)}
       = \sum_{i=1}^{\Nsens} \frac{(1+w_i)^4}{w_i^2}(\pi_i - \pi_i^2)$,
      which can replace the sample estimate for improved stability at small $\Nens$.
      When all probabilities are degenerate, $\Trace{\FisherI(\hyperparam)}=0$
      and the baseline is set to $0$.
      The sample estimator \eqref{eqn:optimal_scalar_baseline_sample} is obtained
      by replacing both expectations in \eqref{eqn:optimal_scalar_baseline_theory}
      with their sample averages over $\{\design[k]\}_{k=1}^{\Nens}$.
    \end{proof}

    \begin{remark}\label{remark:baseline_denom_correction}
      The estimator \eqref{eqn:optimal_scalar_baseline_sample} is a weighted
      average of $\obj(\design[k])$ with non-negative weights
      $\|\vec{s}(\design[k])\|^2$, so $\widehat{\baseline}\opt$ always lies
      in the convex hull of the sampled objective values.
      A naive estimator that uses the sample mean of the score in the denominator
      collapses to zero for large $\Nens$, since
      $\frac{1}{\Nens}\sum_k \vec{s}(\design[k]) \to \vec{0}$
      by the zero-mean identity \eqref{eqn:score_zero_mean};
      the estimator \eqref{eqn:optimal_scalar_baseline_sample} avoids this
      by using $\|\vec{s}\|^2$ (always non-negative) rather than $\vec{s}$ itself.
    \end{remark}

    \begin{proposition}[Optimal per-component baseline]\label{prop:optimal_percomponent_baseline}
      For each coordinate $j = 1,\ldots,\Nsens$, the constant $\baseline_j \in \R$
      that independently minimises $\Var\!\left(\widehat{g}^{\rm b}_j\right)$ is
      \begin{equation}\label{eqn:optimal_percomponent_baseline_theory}
        \baseline\opt_j
          = \frac{
              \Expect{}{
                \obj(\design)\,
                \bigl(
                  \partial_{p_j}\log\CondProb{\design}{\cdot}
                \bigr)^2
              }
            }{
              \Expect{}{
                \bigl(
                  \partial_{p_j}\log\CondProb{\design}{\cdot}
                \bigr)^2
              }
            },
          \quad j = 1, \ldots, \Nsens \,,
      \end{equation}
      where the denominator is the $j$-th diagonal entry of $\FisherI(\hyperparam)$,
      strictly positive for non-degenerate $\hyperparam_j$.
      The per-component baseline vector
      $\vec{\baseline}\opt = (\baseline\opt_1, \ldots, \baseline\opt_{\Nsens})\tran$
      satisfies
      \begin{equation}\label{eqn:percomponent_dominates_scalar}
        \sum_{j=1}^{\Nsens}
          \Var\!\left(\widehat{g}^{\rm b}_{j};\,\baseline\opt_j\right)
        \;\leq\;
        \sum_{j=1}^{\Nsens}
          \Var\!\left(\widehat{g}^{\rm b}_{j};\,\baseline\opt\right) \,,
      \end{equation}
      where $\baseline\opt$ is given by \eqref{eqn:optimal_scalar_baseline_theory},
      with equality if and only if all $\baseline\opt_j$ are equal.
      The sample estimator derived from \eqref{eqn:optimal_scalar_baseline_theory} is
      \begin{equation}\label{eqn:optimal_percomponent_baseline_sample}
        \widehat{\baseline}\opt_j
          = \frac{
              \sum_{k=1}^{\Nens}
                \obj(\design[k])\,
                \bigl(\partial_{p_j}\log\CondProb{\design[k]}{\cdot}\bigr)^2
            }{
              \sum_{k=1}^{\Nens}
                \bigl(\partial_{p_j}\log\CondProb{\design[k]}{\cdot}\bigr)^2
            } \,,
          \quad j = 1, \ldots, \Nsens \,.
      \end{equation}
    \end{proposition}

    \begin{proof}
      Write $s_j(\design) := \partial_{p_j}\log\CondProb{\design}{\cdot}$.
      The $j$-th component of the baselined gradient estimator (for $\Nens=1$) is
      $(\obj(\design) - \baseline_j)\,s_j(\design)$.
      Its variance is
      \begin{align*}
        \Var\!\bigl((\obj(\design)-\baseline_j)\,s_j(\design)\bigr)
        &= \Expect{}{(\obj(\design)-\baseline_j)^2\,s_j(\design)^2}
           - \bigl(\Expect{}{(\obj(\design)-\baseline_j)\,s_j(\design)}\bigr)^2.
      \end{align*}
      As noted in the proof of \Cref{prop:optimal_scalar_baseline}, 
      the second term equals $(\partial_{p_j}\stochobj)^2$ and is independent of
      $\baseline_j$, so minimising over $\baseline_j$ reduces to minimising
      \[
        V_j(\baseline_j)
          := \Expect{}{(\obj(\design)-\baseline_j)^2\,s_j(\design)^2}.
      \]
      Differentiating:
      \[
        \frac{dV_j}{d\baseline_j}
          = -2\,\Expect{}{(\obj(\design)-\baseline_j)\,s_j(\design)^2}
          = -2\,\Expect{}{\obj(\design)\,s_j^2}
            + 2\baseline_j\,\Expect{}{s_j^2}.
      \]
      Setting to zero gives \eqref{eqn:optimal_percomponent_baseline_theory}.
      This is a minimum since $d^2 V_j/d\baseline_j^2 = 2\Expect{}{s_j^2} > 0$
      for non-degenerate $\hyperparam_j$.

      To prove \eqref{eqn:percomponent_dominates_scalar}, note that for any
      scalar $\baseline \in \R$ and each $j$,
      the per-component baseline $\baseline\opt_j$ minimises
      $V_j(\cdot)$ by construction, so
      $V_j(\baseline\opt_j) \leq V_j(\baseline)$ for every $\baseline$.
      Summing over $j$ gives the inequality in
      \eqref{eqn:percomponent_dominates_scalar}.
      Equality holds if and only if $\baseline\opt_j$ is the same for all $j$,
      which is when the scalar optimum $\baseline\opt$ already achieves the
      coordinate-wise minimum.
      Note also that $\baseline\opt$ is the Fisher-diagonal-weighted mean of
      the per-component baselines:
      $\baseline\opt = \sum_j \FisherI_{jj}\,\baseline\opt_j \,/\, \sum_j \FisherI_{jj}$,
      which follows directly from substituting \eqref{eqn:optimal_percomponent_baseline_theory}
      into \eqref{eqn:optimal_scalar_baseline_theory}.
      The sample estimator \eqref{eqn:optimal_percomponent_baseline_sample} is
      obtained by replacing expectations with sample averages.
    \end{proof}

    \begin{remark}[]\label{rmrk:baseline} 
      Both \eqref{eqn:optimal_scalar_baseline_sample}
      and \eqref{eqn:optimal_percomponent_baseline_sample} are computed from
      the same sample already drawn to estimate the gradient, incurring no
      additional evaluations of $\obj$.
      Different coordinates $j$ may have very different signal-to-noise ratios.
      This is especially the case near convergence, when some
      $\hyperparam_j$ are close to $0$ or $1$.
      The practical variance-reducing effect of both estimators is verified
      empirically in \Cref{sec:numerical_baseline_variance}, which shows
      that the per-component baseline yields lower variance than the
      scalar baseline in the CB experiments across two representative
      initialization regimes.
    \end{remark}

    \paragraph{Choice of baseline.}
      Both Propositions are presented because they apply in different
      settings.
      The scalar result \Cref{prop:optimal_scalar_baseline} is the
      standard REINFORCE construction and is well defined for any
      probabilistic model whose score function is square integrable.
      The per-component refinement
      \Cref{prop:optimal_percomponent_baseline} applies when the
      probability model is built from a vector of independent
      per-coordinate parameters.
      This is the case for the CB and GCB models considered here, where
      each candidate location has its own Bernoulli parameter
      $\hyperparam_i$.
      For probabilistic models whose design does not have this
      one-parameter-per-coordinate structure, the per-component baseline
      is not well defined, and the scalar baseline is the appropriate
      construction.
      One example is the distribution over index sequences in
      trajectory-based experimental design
      \cite{attia2026probabilistic_path_oed}.
      In the experiments reported here the per-component baseline is the
      default, and cross-validation against the scalar baseline is
      reported in \Cref{sec:numerical_baseline_variance}.

    \subsection{Algorithmic statement}
    \label{subsec:optimization_algorithms}
      \Cref{alg:probabilistic_binary_optimization}
      details the steps
      of the proposed probabilistic approach 
      for solving
      \eqref{eqn:binary_optimization_budget_equality_constraint}
      and 
      \eqref{eqn:binary_optimization_budget_inclusion_constraint}.
      \Cref{alg:probabilistic_binary_optimization} inherits the advantages 
      of the original stochastic optimization 
      algorithm \cite[Algorithm 3.2]{attia2022stochastic}. 
      Specifically,
      the value of $\obj$ is evaluated repeatedly at instances of the binary variable 
      $\design$, which are more likely/frequently revisited as the algorithm proceeds.
      Thus, redundancy in computation is prevented by keeping track of the sampled binary 
      variables and the corresponding value of $\obj$.
      \Cref{alg:probabilistic_binary_optimization} 
      is conceptual, because the loop (Step 2) is not necessarily a finite processes. 
      In practice, we can ensure that the loop is finite by setting a tolerance of the projected gradient and/or a maximum number of iterations.

      \begin{algorithm}[htbp!]
        \caption{
          Probabilistic black-box binary optimization for solving
          \eqref{eqn:binary_optimization_budget_equality_constraint}
          or
          \eqref{eqn:binary_optimization_budget_inclusion_constraint}.
        }
        \label{alg:probabilistic_binary_optimization}
        \begin{algorithmic}[1] 
        
          \Require{Initial distribution parameter $\hyperparam^{(0)}$,
                    stepsize schedule $\eta^{(n)}$, 
                    and sample sizes $\Nens,\, N_{\rm opt}$
                    }
          \Ensure{$\design\opt$}

          \State{initialize $n = 0$}

          \While{Not Converged}
            \State{ \label{algstep:random_sample}
              Sample $\{\design[k]; k=1,\ldots,\Nens \} $ 
              \Comment{
                Use \Cref{alg:CB_Model_sampling} 
                for \eqref{eqn:binary_optimization_budget_equality_constraint}
                or
                \Cref{alg:GCB_Model_sampling} 
                for \eqref{eqn:binary_optimization_budget_inclusion_constraint}
              }
            }
          
            \State{
              Calculate  optimal baseline estimate $\baseline\opt$
              \Comment{
                Use \eqref{eqn:optimal_scalar_baseline_sample}; see \Cref{subsec:variance_reduction}
              }
            }

            \State{
              Calculate $ \widehat{\vec{g}}^{{\rm b},(n)}$
              \Comment{
                Use \eqref{eqn:budget_equality_constraint_stochastic_gradient_with_basesline}
                for \eqref{eqn:binary_optimization_budget_equality_constraint}
                or
                \eqref{eqn:budget_inclusion_constraint_stochastic_gradient_with_baseline}
                for \eqref{eqn:binary_optimization_budget_inclusion_constraint}
              }
            }

            \State{
              Update
              $ \hyperparam^{(n+1)}
                  = \hyperparam^{(n)} + \eta^{(n)} \Proj{\hyperparam, \eta}{\widehat{\vec{g}}^{{\rm b},(n)}}
              $
              \Comment{Use $\proj_{\hyperparam, \eta}$ given by \eqref{eqn:scaling_projector}}
            }
            
            \State {
              Update $n\leftarrow n+1$ 
            }

          \EndWhile
          
          \State{
            Set $\hyperparam\opt = \hyperparam^{(n)}$
          }

          \State{
              Sample $\{\design[k]; k=1,\ldots,N_{\rm opt} \} $ 
              with the optimal parameter $\hyperparam\opt$.
          }
     
          \State \Return{
              $\design\opt$: the design $\design$ with the largest
                value of $\obj$ in the sample.
          }
          
        \end{algorithmic}
      \end{algorithm}

      The parameter update in \Cref{alg:probabilistic_binary_optimization} uses the
      scaling projection operator $\Proj{\hyperparam, \eta}{\cdot}$ defined by
      \begin{equation}\label{eqn:scaling_projector}
        \Proj{\hyperparam, \eta}{g} := \rho \, g\,; \quad
        \rho := \min\bigl\{1,\, \min_{i=1,\ldots,\Nsens} \rho_i\bigr\}\,; \quad
        \rho_i =
          \begin{cases}
            \dfrac{1-\hyperparam_i}{|g_i|} & \text{if } \hyperparam_i \pm \eta g_i > 1 \\[6pt]
            \dfrac{\hyperparam_i}{|g_i|}   & \text{if } \hyperparam_i \pm \eta g_i < 0 \\[6pt]
            1                              & \text{otherwise,}
          \end{cases}
      \end{equation}
      which scales the entire gradient vector to keep
      $\hyperparam^{(n)} + \eta^{(n)}\Proj{\hyperparam, \eta}{\widehat{\vec{g}}^{(n)}}$
      inside $[0,1]^{\Nsens}$.
      The parameter update step is therefore
      \begin{equation}\label{eqn:stoch_steep_step}
        \hyperparam^{(n+1)}
          = \hyperparam^{(n)} + \eta^{(n)} \Proj{\hyperparam, \eta}{\widehat{\vec{g}}^{{\rm b},(n)}} \,,
      \end{equation}
      where $0 < \eta^{(n)} \leq 1$ is the step size (learning rate) at the $n$th
      iteration.
      The $\pm$ in \eqref{eqn:scaling_projector} covers both maximization ($+$)
      and minimization ($-$), and the plus sign in \eqref{eqn:stoch_steep_step}
      is replaced with a minus sign for minimization.

      \begin{remark}[Scaling projection operator]\label{remark:scaling_projector}
        The projector \eqref{eqn:scaling_projector} scales the \emph{entire}
        gradient vector by a single scalar $\rho \leq 1$, chosen as the largest value
        such that the updated parameter remains in $[0,1]^{\Nsens}$ for any
        $\eta^{(n)} \leq 1$.
        This has two practical consequences.
        First, it \emph{absorbs the effect of the step size}: since feasibility is
        guaranteed for any $\eta \in (0,1]$, one can use a fixed learning rate
        across all iterations without per-iteration checks or adaptive schedules
        (as done in all numerical experiments of \Cref{sec:numerical_experiments}).
        Second, it preserves the \emph{direction} of the gradient update — only
        the magnitude is reduced — so the ascent property of the step is not
        compromised.
        Strictly speaking, $\Proj{\hyperparam,\eta}{\cdot}$ is not a
        metric projection onto a convex set: it is a scalar rescaling
        of the search direction $g$ chosen so that the update
        $\hyperparam + \eta\,\Proj{\hyperparam,\eta}{g}$ remains in
        $[0,1]^{\Nsens}$ for every $\eta \in (0,1]$.
        We retain the operator notation
        $\Proj{\hyperparam,\eta}{\cdot}$ only for compactness within
        \Cref{alg:probabilistic_binary_optimization}.
        The standard alternative within the projected-gradient family
        is the Euclidean projection onto $[0,1]^{\Nsens}$, i.e.,
        component-wise truncation, which underlies projected gradient
        descent \cite[Ch.~16]{nocedal2006numerical} and is used in
        the unconstrained variant of the present approach
        \cite{attia2022stochastic}.
        Truncation distorts the direction of the gradient
        component-by-component whenever any coordinate is clipped.
        The construction adopted here preserves the direction of $g$,
        allows a fixed learning rate $\eta \in (0,1]$ without
        per-iteration tuning, and keeps iterates inside
        $[0,1]^{\Nsens}$, which is consistent with the non-degenerate
        intermediate and optimal policies observed in
        \Cref{sec:numerical_experiments}.
      \end{remark}

    \subsection{Computational considerations}
    \label{subsec:computational_considerations}
      The computational cost per iteration of
      \Cref{alg:probabilistic_binary_optimization} is dominated by two
      axes.
      The first is the evaluation of the objective $\obj(\design[k])$
      for each of the $\Nens$ sampled designs.
      The second is the sampling procedure together with the
      computation of the score
      $\nabla_{\hyperparam}\log\CondProb{\design[k]}{\designsum}$ for
      each sample.

      For the CB model under the budget-equality constraint
      \eqref{eqn:binary_optimization_budget_equality_constraint},
      the per-iteration asymptotic cost of the sampling-and-score axis
      admits an explicit breakdown.
      Building the auxiliary $q$-matrix
      \eqref{eqn:q_matrix} once per iteration costs
      $\mathcal{O}(\Nsens\,\designsumval)$.
      Drawing the $\Nens$ designs via
      \Cref{alg:CB_Model_sampling} then costs
      $\mathcal{O}(\Nens\,\Nsens)$, since each design is drawn in
      $\mathcal{O}(\Nsens)$ time using the precomputed $q$-matrix.
      Evaluating the score
      $\nabla_{\hyperparam}\log\CondProb{\design[k]}{\designsum=\designsumval}$
      via the recurrence
      \eqref{eqn:R_function_and_Bernoulli_weights_recurrence_relations}
      requires, for each sampled design, the leave-one-out values
      $R(\designsumval-1, V\setminus\{i\})$ and
      $R(\designsumval, V\setminus\{i\})$ for $i = 1, \ldots, \Nsens$.
      The full table $R(k, V)$ for $k = 0, \ldots, \designsumval$ is
      built once per design at cost $\mathcal{O}(\Nsens\,\designsumval)$,
      from which all $\Nsens$ leave-one-out values are recovered by
      inverting the recurrence.
      The per-component assembly of the score then proceeds in
      $\mathcal{O}(\Nsens)$ work per index in the current implementation,
      giving $\mathcal{O}(\Nsens^2)$ per design and
      $\mathcal{O}(\Nens\,\Nsens^2 + \Nens\,\Nsens\,\designsumval)$
      over the full sample.
      Forming the per-component baseline
      \eqref{eqn:optimal_percomponent_baseline_sample} and the
      baselined gradient is two passes over the score matrix at
      $\mathcal{O}(\Nens\,\Nsens)$.
      The scaling projector
      \eqref{eqn:scaling_projector} and the parameter update
      contribute $\mathcal{O}(\Nsens)$.
      The total algorithmic cost per iteration is therefore
      $\mathcal{O}\!\left(\Nens\,\Nsens\,(\Nsens + \designsumval)\right)$,
      which simplifies to $\mathcal{O}(\Nens\,\Nsens^2)$ in the typical
      regime $\designsumval \le \Nsens$.
      The GCB model under the inclusion constraint
      \eqref{eqn:binary_optimization_budget_inclusion_constraint}
      adds a multiplicative factor of $|\designsumset|$ in the
      score/log-PMF terms because the relevant $R$-tables must be
      assembled for each budget
      $\designsumval \in \designsumset$.
      The objective-evaluation axis contributes
      $\mathcal{O}(\Nens \cdot \Cobj)$ where $\Cobj$ is the cost of a
      single evaluation of $\obj(\design)$.
      This contribution is application-specific and data-parallel across
      the $\Nens$ samples.
      For the OED problems considered in this work the per-evaluation
      forward solve is sufficiently expensive that
      $\Nens\,\Cobj$ dominates the algorithmic cost
      $\mathcal{O}(\Nens\,\Nsens^2)$ in the regimes studied in
      \Cref{sec:numerical_experiments}; this regime is typical for
      simulation-based OED applications.
      An empirical confirmation of this breakdown is given in the context of 
      the
      advection--diffusion OED experiments given in 
      \Cref{sec:numerical_experiments}.

    \subsection{Relation between binary and probabilistic formulations}
    \label{subsubsec:binary_vs_probabilistic_formulation}
      The relation between 
      the original binary optimization problems 
      and the proposed probabilistic formulations
      is stated by \Cref{theorem:Binary_to_Probabilistic}.
      
      \begin{theorem}\label{theorem:Binary_to_Probabilistic}
        The optimal solutions of 
        \eqref{eqn:binary_optimization_budget_equality_constraint} and 
        \eqref{eqn:probabilistic_optimization_budget_equality_constraint}
        are such that 
        \begin{equation}
          \argmax\limits_{\substack{
              \design\in\{0, 1\}^{\Nsens} ,\,  
              \wnorm{\design}{0} = \designsumval
            } 
          } \obj(\design)
          \subseteq
          \argmax_{\hyperparam \in [0, 1]^{\Nsens} }
        \Expect{
             \design\sim\CondProb{\design}{\designsum=\designsumval}
           }{\obj(\design)}  \,;
        \end{equation}
        and if the solution $\design\opt$ of 
        \eqref{eqn:binary_optimization_budget_equality_constraint}
        is unique, then 
        $\design\opt=\hyperparam\opt$, where $\hyperparam\opt$ is the 
        unique optimal solution of \eqref{eqn:probabilistic_optimization_budget_equality_constraint}.
        Similarly, 
        the optimal solutions of the two problems 
        \eqref{eqn:binary_optimization_budget_inclusion_constraint} and 
        \eqref{eqn:probabilistic_optimization_budget_inclusion_constraint}
        are such that 
        \begin{equation}
          \argmax\limits_{\substack{
              \design\in\{0, 1\}^{\Nsens} \\
              \wnorm{\design}{0} \in \designsumset \subseteq\{0, 1\ldots, \Nsens\}
            } 
          } \obj(\design)
          \subseteq
          \argmax_{\hyperparam \in [0, 1]^{\Nsens} }
        \Expect{
             \design\sim\CondProb{\design}{
               \designsum \in \designsumset \subseteq\{0, 1\ldots, \Nsens\}
             }
           }{\obj(\design)}  \,;
        \end{equation}
        and if $\design\opt$ uniquely solves 
        \eqref{eqn:binary_optimization_budget_inclusion_constraint}
        then 
        $\design\opt=\hyperparam\opt$ 
        the unique 
        solution of \eqref{eqn:probabilistic_optimization_budget_inclusion_constraint}.
      \end{theorem}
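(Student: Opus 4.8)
The plan is to exploit the fact that, for any fixed policy $\hyperparam$, the expected objective $\stochobj(\hyperparam)$ is a convex combination of the objective values over the feasible set. Write $\obj^{\star} := \max_{\design\in\{0,1\}^{\Nsens},\,\wnorm{\design}{0}=\designsumval}\obj(\design)$. Since the CB model~\eqref{eqn:CB_Model_Full_PMF} assigns each feasible $\design$ a nonnegative weight $\CondProb{\design}{\designsum=\designsumval}$ that sum to $1$,
\begin{equation*}
  \stochobj(\hyperparam)
    = \sum_{\substack{\design\in\{0,1\}^{\Nsens}\\ \wnorm{\design}{0}=\designsumval}}
        \obj(\design)\,\CondProb{\design}{\designsum=\designsumval}
    \;\leq\; \obj^{\star},
\end{equation*}
so the maximum of $\stochobj$ over $\hyperparam\in[0,1]^{\Nsens}$ is bounded above by $\obj^{\star}$. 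This single inequality drives both parts of the theorem.

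For the set inclusion, I would take an arbitrary binary maximizer $\design\opt$ and evaluate the degenerate policy $\hyperparam=\design\opt$. Setting $I=\{i:\design\opt_i=1\}$ and $O=\{i:\design\opt_i=0\}$, so that $|I|=\designsumval$ and $S\setminus\{I\cup O\}=\emptyset$, the degenerate form of the CB model in~\Cref{theorem:CB_Model} collapses the PMF~\eqref{eqn:CB_Model_Full_PMF} to the Dirac mass at $\design\opt$. Hence $\stochobj(\design\opt)=\obj(\design\opt)=\obj^{\star}$ attains the upper bound, which shows $\design\opt\in\argmax_{\hyperparam}\stochobj(\hyperparam)$. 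As $\design\opt$ was an arbitrary binary maximizer, the claimed inclusion $\subseteq$ follows.

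For the uniqueness statement I would argue in reverse. If $\design\opt$ is the unique binary maximizer, then $\obj(\design)<\obj^{\star}$ for every other feasible $\design$; since any probabilistic maximizer $\hyperparam\opt$ achieves $\stochobj(\hyperparam\opt)=\obj^{\star}$ as a convex combination of values bounded by $\obj^{\star}$ and strictly below it away from $\design\opt$, equality forces all the mass onto $\design\opt$, i.e. $\CondProb{\design\opt}{\designsum=\designsumval;\,\hyperparam\opt}=1$. It then remains to translate this point-mass condition back into the parameter vector to conclude $\hyperparam\opt=\design\opt$. The hard part will be precisely this last translation, because of degenerate success probabilities: a CB distribution can collapse to a Dirac mass even when some entries of $\hyperparam$ lie strictly in $(0,1)$ (for instance when the reduced budget $\designsumval-|I|$ equals $0$ or the number of free coordinates). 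Resolving it requires the degenerate-case characterization of~\Cref{theorem:CB_Model} to pin down exactly which parameter configurations yield $\delta_{\design\opt}$ and to single out the vertex $\design\opt$ as the distinguished optimizer.

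Finally, for the budget-inclusion (GCB) case the argument is structurally identical. By~\Cref{lemma:CB-Multiple-Sums} the GCB PMF~\eqref{eqn:GCB_Model_PMF} is itself a convex combination of the conditional probabilities $\CondProb{\design}{\designsum=\designsumval}$ over $\designsumval\in\designsumset$, weighted by the PB probabilities $\Prob(\designsum=\designsumval)$, and hence a convex combination of $\{\obj(\design):\wnorm{\design}{0}\in\designsumset\}$. The same upper-bound reasoning gives $\stochobj(\hyperparam)\leq\max_{\wnorm{\design}{0}\in\designsumset}\obj(\design)$, the degenerate policy $\hyperparam=\design\opt$ (now with $|I|\in\designsumset$) again yields $\delta_{\design\opt}$ to establish the inclusion, and the identical point-mass-to-parameter step delivers the conditional uniqueness over the enlarged feasible set.
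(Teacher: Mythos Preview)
Your argument for the inclusion is correct and is essentially the same as what the paper invokes: the paper's own proof is a one-line appeal to \cite[Lemma 3.2 and Proposition 3.1]{attia2022stochastic}, observing that those unconstrained results carry over verbatim once the support of the distribution is restricted to the feasible set. Your convex-combination bound together with the Dirac-mass evaluation at $\hyperparam=\design\opt$ (via the degenerate CB characterization of \Cref{theorem:CB_Model}) is exactly that argument spelled out, so for the $\subseteq$ part there is nothing to add.

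Your hesitation about the uniqueness claim, however, is not merely a technical wrinkle to be cleaned up: it points to a genuine non-injectivity of the map $\hyperparam\mapsto\CondProb{\cdot}{\designsum=\designsumval;\hyperparam}$ that the paper's citation does not address. Concretely, take $\Nsens=3$, $\designsumval=1$, and suppose $\design\opt=(1,0,0)$ is the unique binary maximizer. Then for \emph{any} $\hyperparam=(1,p_2,p_3)$ with $p_2,p_3\in[0,1)$ one has $I=\{1\}$, $\designsumval-|I|=0$, and \eqref{eqn:CB_Model_Full_PMF} forces $\design_2=\design_3=0$; hence $\CondProb{\cdot}{\designsum=1;\hyperparam}=\delta_{\design\opt}$ and $\stochobj(\hyperparam)=\obj^{\star}$ for an entire face of the cube, not just at the vertex $\design\opt$. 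So the probabilistic argmax is not a singleton and the claimed equality $\hyperparam\opt=\design\opt$ cannot hold as a statement about \emph{the} unique optimizer. Your instinct that ``resolving it requires the degenerate-case characterization'' is right, but the resolution is not that the characterization pins down $\design\opt$; rather, it shows the uniqueness assertion needs qualification (e.g.\ uniqueness among binary $\hyperparam$, or the statement that $\design\opt$ is one particular element of $\argmax_{\hyperparam}\stochobj$). Flagging this is a strength of your write-up relative to the paper's deferral.
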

      \begin{proof}
        The proof follows directly from \cite[Lemma~3.2 and Proposition~3.1]{attia2022stochastic}.
        The argument carries over to the constrained setting by noting that the CB and GCB models
        restrict the support of $\CondProb{\design}{\hyperparam}$ to the feasible region
        $\{\design : \wnorm{\design}{0} = \designsumval\}$
        (respectively $\{\design : \wnorm{\design}{0} \in \designsumset\}$).
        With this substitution, the inclusion of the binary optimal set in the probabilistic
        optimal set, and the uniqueness implication, follow identically to
        \cite[Proposition~3.1]{attia2022stochastic}.
      \end{proof}

      \Cref{theorem:Binary_to_Probabilistic} shows that the
      optimal set of the probabilistic optimization problem
      includes the optimal set of the original binary optimization problem.
      By sampling the underlying probability distribution (e.g., CB or GCB) with parameter
      set to the optimal solution of the probabilistic optimization problem, one obtains a set of
      binary realizations of $\design$ with at least a near-optimal objective value as suggested in
      \cite{attia2022stochastic} and as shown in \Cref{sec:numerical_experiments}.

    \subsection{Convergence analysis}
      \label{subsec:convergence_analysis}
      The convergence analysis requires the following assumption on the utility function.

      \begin{assumption}\label{assumption:utility_boundedness}
        The utility function $\obj: \{0,1\}^{\Nsens} \to \Rnum$ is real-valued and bounded,
        i.e., there exists $M < \infty$ such that $|\obj(\design)| \leq M$
        for all $\design \in \{0,1\}^{\Nsens}$.
      \end{assumption}

      \noindent
      This assumption is mild and holds in all standard OED settings:
      the utility is evaluated at a finite number of binary designs
      ($2^{\Nsens}$ in the unconstrained case, $\binom{\Nsens}{\designsumval}$ in
      the constrained case), so boundedness is automatic.
      Boundedness of $\obj$ ensures that the stochastic gradient estimator
      \eqref{eqn:kernel_stochastic_gradient_Bernoulli} has finite variance,
      which is required for convergence of the stochastic gradient ascent algorithm.

      Proving convergence of \Cref{alg:probabilistic_binary_optimization} in expectation to
      the solution of the corresponding binary optimization problem requires developing
      bounds on the first- and second-order derivatives of the exact gradient and ensuring
      that the corresponding stochastic gradient approximation is an unbiased estimator
      and has bounded variance \cite{attia2022stochastic}.

      Evaluations of the PMF and the gradients of the CB model \eqref{eqn:CB_Model_Full_PMF}, 
      and the GCB model \eqref{eqn:GCB_Model_PMF}, rely primarily on 
      the weights \eqref{eqn:R_function_and_Bernoulli_weights} of the non-degenerate entries.
      For non-degenerate 
      probabilities 
      the weights 
      satisfy the following 
      upper bounds, 
      \begin{equation}\label{eqn:R_function_and_Bernoulli_weights_bounds}
        \max\limits_{i=1,\ldots,\Nsens} \!w_i 
          = \frac{
              \max\limits_{i=1,\ldots,\Nsens} \hyperparam_i 
            }{
              1- \max\limits_{i=1,\ldots,\Nsens} \hyperparam_i 
            }; \,\, 
        \max\limits_{i=1,\ldots,\Nsens} \frac{(1+w_i)^2}{w_i} 
          =  \frac{1}{\widehat{\hyperparam}(1-\widehat{\hyperparam})} 
          , \,\, 
            \widehat{\hyperparam} := 
            \max\limits_{i=1,\ldots,\Nsens} \{\hyperparam_i, 1\!-\!\hyperparam_i\}
          \,,
      \end{equation}
      which enables
      bounding 
      the derivatives of the CB
      model described by \Cref{lemma:CB_Model_derivatives_bounds}.

      \begin{lemma}\label{lemma:CB_Model_derivatives_bounds}
      \begin{subequations}\label{eqn:CB_Model_derivatives_bounds}
        The 
        non-degenerate CB 
        model \eqref{eqn:CB_Model_PMF} satisfy the following 
        bounds:
        \begin{align}
          \label{eqn:CB_Model_derivatives_bounds_1}
          \sqnorm{\nabla_{\hyperparam} \CondProb{\design}{ \designsum=\designsumval}}
            &\leq 
            \sqnorm{\nabla_{\hyperparam} \log \CondProb{\design}{ \designsum=\designsumval}}
            \leq 
            \Nsens\, C^2
            \,,\\
          \label{eqn:CB_Model_derivatives_bounds_2}
          \Expect{}{
            \sqnorm{
              \nabla_{\hyperparam} \log \CondProb{\design}{ \designsum=\designsumval}  
            }
          }
          &= 
            \brVar{ 
              \nabla_{\hyperparam} \log \CondProb{\design}{ \designsum=\designsumval} 
            }
              \leq \frac{\Nsens}{4}\, C^2
              \,,  \\
          \label{eqn:CB_Model_derivatives_bounds_3}
            \abs{
              \delll{
                \CondProb{\design}{\designsum=\designsumval} 
              }{\hyperparam_i}{\hyperparam_j}
            }
            &\leq 2 C^2 
            \,,
        \end{align}
        \end{subequations}
        where 
        $C= \max\limits_{i=1,\ldots,\Nsens} \frac{(1+w_i)^2}{w_i}$.
        Moreover, there is a finite constant $\widehat{C}$ such that 
        the derivatives \eqref{eqn:CB_Model_Full_Gradient} of the degenerate 
        CB 
        model \eqref{eqn:CB_Model_Full_PMF} satisfy 
        \begin{subequations}\label{eqn:CB_Model_Full_derivatives_bounds}
          \begin{align}
          \sqnorm{\nabla_{\hyperparam} \CondProb{\design}{\designsum=\designsumval}}
            &\leq \widehat{C}
            \,, \\
          \Expect{}{
            \sqnorm{
              \nabla_{\hyperparam} \log \CondProb{\design}{\designsum=\designsumval}  
            }
          }
            &\leq \widehat{C}
            \,, \\
            \abs{
              \delll{
                \CondProb{\design}{\designsum=\designsumval} 
              }{\hyperparam_i}{\hyperparam_j}
            }
            &\leq \widehat{C}
            \,.           
          \end{align}
        
        \end{subequations}
      \end{lemma}
      \begin{proof}
        See \Cref{app:probabilistic_optimization_proofs}.
      \end{proof}

      \Cref{theorem:probabilistic_optimization_derivatives_bounds_exact} 
      employs \Cref{lemma:CB_Model_derivatives_bounds} to
      show that 
      the gradient of $\stochobj$ is bounded in norm (gradient bound)
      and the gradient difference at any two points is bounded by a finite constant
      (Hessian entries are bounded), which together imply that $\stochobj$ is
      Lipschitz smooth and guarantee convergence
      of a steepest-descent approach
      for solving \eqref{eqn:probabilistic_optimization_budget_equality_constraint}
      to a local optimum.
      \begin{theorem}\label{theorem:probabilistic_optimization_derivatives_bounds_exact}
        Let $\design \in \designdomain \subset \{0, 1\}^{\Nsens} $ be modeled by 
        the 
        CB model \eqref{eqn:CB_Model_PMF}, 
        then the 
        derivatives of the 
        objective $\stochobj$ in
        \eqref{eqn:probabilistic_optimization_budget_equality_constraint} 
        satisfy the  
        bounds:
        \begin{subequations}\label{eqn:exact_gradient_bounds}
        \begin{eqnarray} 
            \norm{ \nabla_{\hyperparam}\,\stochobj(\hyperparam) } 
            &\leq &
            M\, \binom{\Nsens}{\designsumval} C \sqrt{\Nsens}
            \,, \quad \hyperparam \in [0,1]^{\Nsens} \,,
              \label{eqn:exact_gradient_norm_bound}  \\
            \norm{ 
              \nabla_{\hyperparam}\,\stochobj(\hyperparam[1]) - 
              \nabla_{\hyperparam}\,\stochobj(\hyperparam[2]) 
            } &\leq& 2 \, M\, \sqrt{
            \binom{\Nsens}{\designsumval} \Nsens C
          }  
            \,, \quad \hyperparam[1], \hyperparam[2] \in [0,1]^{\Nsens} \,,
              \label{eqn:exact_gradient_Lipschitz} \\
            \abs{ \delll{\stochobj}{\hyperparam_i}{\hyperparam_j} } 
              &\leq& \frac{3\, M\, C}{2}  \sqrt{\binom{\Nsens}{\designsumval}} \,,
            \label{eqn:Hessian_entries_bound}
        \end{eqnarray}
        \end{subequations}
        where $C= \max\limits_{i=1,\ldots,\Nsens} \frac{(1+w_i)^2}{w_i}$,
        and $\binom{\Nsens}{\designsumval}$ is the cardinality
        of the feasible domain,
        and 
        $M = \max\limits_{\design\in \designdomain}{\{ \left|\obj(\design)\right| ,\, \wnorm{\design}{0}=\designsumval\}}$.
        Moreover, if $\design$ is in the degenerate case, that is, when $\design$ follows
        \eqref{eqn:CB_Model_Full_PMF}, then there is a finite constant 
        $\widetilde{C}<\infty$ such that 
        \begin{equation}\label{eqn:exact_gradient_bounds_degenerate}
          \norm{ \nabla_{\hyperparam}\,\stochobj(\hyperparam) } \leq \widetilde{C} \,;
          \qquad 
          \norm{ 
              \nabla_{\hyperparam}\,\stochobj(\hyperparam[1]) - 
              \nabla_{\hyperparam}\,\stochobj(\hyperparam[2]) 
          } \leq \widetilde{C} \,;
          \qquad 
          \abs{ \delll{\stochobj}{\hyperparam_i}{\hyperparam_j} } \leq\widetilde{C} \,.
        \end{equation}
      \end{theorem}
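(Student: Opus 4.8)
The plan is to exploit the finiteness of the feasible set. Writing $\designdomain = \{\design\in\{0,1\}^{\Nsens} : \wnorm{\design}{0} = \designsumval\}$, which has cardinality $\binom{\Nsens}{\designsumval}$, the stochastic objective is the finite sum $\stochobj(\hyperparam) = \sum_{\design\in\designdomain}\obj(\design)\,\CondProb{\design}{\designsum=\designsumval}$. Because $\obj$ does not depend on $\hyperparam$, every derivative of $\stochobj$ passes through the sum onto the CB PMF, so all three estimates reduce to termwise bounds on the first and second $\hyperparam$-derivatives of $\CondProb{\design}{\designsum=\designsumval}$, which are exactly what \Cref{theorem:CB_Model_derivatives_bounds} supplies. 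I would fix $M = \max_{\design\in\designdomain}\abs{\obj(\design)}$ at the outset and then handle the three bounds of \eqref{eqn:exact_gradient_bounds} in turn, invoking the cardinality count wherever the number of feasible designs enters.

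For the gradient estimate \eqref{eqn:exact_gradient_norm_bound} I would start from the finite-sum form of $\nabla_{\hyperparam}\stochobj$ in \eqref{eqn:budget_equality_constraint_exact_gradient}, i.e. $\nabla_{\hyperparam}\stochobj = \sum_{\design\in\designdomain}\obj(\design)\,\nabla_{\hyperparam}\CondProb{\design}{\designsum=\designsumval}$. Applying the triangle inequality (or Cauchy--Schwarz across the sum), bounding $\abs{\obj(\design)}\leq M$, and substituting the PMF-gradient bound $\sqnorm{\nabla_{\hyperparam}\CondProb{\design}{\designsum=\designsumval}}\leq \Nsens\,C^2$ from \eqref{eqn:CB_Model_derivatives_bounds_1} yields a per-term factor $\sqrt{\Nsens}\,C$; summing over the $\binom{\Nsens}{\designsumval}$ feasible designs produces the claimed dependence on $M$, $\binom{\Nsens}{\designsumval}$, $\Nsens$, and $C$.

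The Hessian entries \eqref{eqn:Hessian_entries_bound} I would obtain by differentiating the same finite sum twice, $\delll{\stochobj}{\hyperparam_i}{\hyperparam_j} = \sum_{\design\in\designdomain}\obj(\design)\,\delll{\CondProb{\design}{\designsum=\designsumval}}{\hyperparam_i}{\hyperparam_j}$, and then applying Cauchy--Schwarz across the sum as $(\sum_{\design}\obj(\design)^2)^{1/2}(\sum_{\design}(\delll{\CondProb{\design}{\designsum=\designsumval}}{\hyperparam_i}{\hyperparam_j})^2)^{1/2}$. The first factor is at most $M\sqrt{\binom{\Nsens}{\designsumval}}$, while the second is controlled by the second-derivative bound $\abs{\delll{\CondProb{\design}{\designsum=\designsumval}}{\hyperparam_i}{\hyperparam_j}}\leq \frac{9}{4}C^2 = (\frac{3}{2}C)^2$ from \eqref{eqn:CB_Model_derivatives_bounds_3}, which is where the factor $\frac{3}{2}C$ in the statement originates. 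The Lipschitz-type estimate \eqref{eqn:exact_gradient_Lipschitz} then follows either from the mean-value identity $\nabla_{\hyperparam}\stochobj(\hyperparam[1]) - \nabla_{\hyperparam}\stochobj(\hyperparam[2]) = \int_0^1 \nabla_{\hyperparam}^2\stochobj(\hyperparam[2]+t(\hyperparam[1]-\hyperparam[2]))\,(\hyperparam[1]-\hyperparam[2])\,dt$ controlled through the Hessian estimate, or more directly by a Cauchy--Schwarz bound on the gradient difference $\sum_{\design}\obj(\design)\,\bigl(\nabla_{\hyperparam}\CondProb{\design}{\designsum=\designsumval}\big|_{\hyperparam[1]} - \nabla_{\hyperparam}\CondProb{\design}{\designsum=\designsumval}\big|_{\hyperparam[2]}\bigr)$ together with the norm bound \eqref{eqn:CB_Model_derivatives_bounds_1}; the bounded diameter of $[0,1]^{\Nsens}$ keeps the estimate uniform over $\Omega_\hyperparam$.

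The degenerate bounds \eqref{eqn:exact_gradient_bounds_degenerate} are where I expect the real work. The non-degenerate constant $C = \max_i (1+w_i)^2/w_i$ diverges as any $\hyperparam_i\to 0$ or $1$, so the estimates above become vacuous on the boundary of the cube. Instead I would invoke the degenerate PMF-derivative bounds of \Cref{theorem:CB_Model_derivatives_bounds} (with the finite constant $\widehat{C}$) and rerun the finite-sum arguments, taking $\widetilde{C}$ to be the largest of the three resulting constants. The crux is justifying that $\widehat{C}$ is genuinely finite, i.e. that the apparent singularity of the weights at the boundary is removable: this rests on \Cref{theorem:CB_Model,lemma:CB_Model_Degenerate}, which extend the PMF and its first two $\hyperparam$-derivatives continuously to the closed cube $[0,1]^{\Nsens}$. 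Combining this continuity with compactness of $[0,1]^{\Nsens}$ shows the continuous maps $\hyperparam\mapsto\nabla_{\hyperparam}\stochobj$ and $\hyperparam\mapsto\nabla_{\hyperparam}^2\stochobj$ attain finite maxima, giving $\widetilde{C}<\infty$; matching the precise constants in the non-degenerate estimates is then bookkeeping over the choice of triangle inequality versus Cauchy--Schwarz.
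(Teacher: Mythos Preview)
Your proposal is correct and follows essentially the same route as the paper: write $\stochobj$ as a finite sum over the $\binom{\Nsens}{\designsumval}$ feasible designs, push derivatives through onto the CB PMF, and invoke the termwise bounds of \Cref{theorem:CB_Model_derivatives_bounds}; the degenerate case is handled exactly as you say, by rerunning the argument with the finite constant $\widehat{C}$ from \eqref{eqn:CB_Model_Full_derivatives_bounds}.

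The one place your plan differs from the paper is \eqref{eqn:exact_gradient_Lipschitz}. You propose either a mean-value/Hessian argument or a Cauchy--Schwarz bound on the gradient difference, and you invoke the bounded diameter of $[0,1]^{\Nsens}$. But note that the right-hand side of \eqref{eqn:exact_gradient_Lipschitz} does \emph{not} depend on $\norm{\hyperparam[1]-\hyperparam[2]}$: it is a uniform bound on the gradient difference, not a Lipschitz constant. The paper therefore takes the shortest possible route, namely the plain triangle inequality $\norm{\nabla_{\hyperparam}\stochobj(\hyperparam[1])-\nabla_{\hyperparam}\stochobj(\hyperparam[2])}\leq\norm{\nabla_{\hyperparam}\stochobj(\hyperparam[1])}+\norm{\nabla_{\hyperparam}\stochobj(\hyperparam[2])}$ followed by two applications of \eqref{eqn:exact_gradient_norm_bound}. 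Your approaches would also succeed (and your mean-value route would in fact yield a genuine Lipschitz estimate, which is stronger than what is claimed), but the paper's argument is more economical for the stated inequality.
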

      \begin{proof}
        See \Cref{app:probabilistic_optimization_proofs}.
      \end{proof}

      From \Cref{theorem:probabilistic_optimization_derivatives_bounds_exact} it follows 
      that similar bounds can be developed for the stochastic objective 
      \eqref{eqn:probabilistic_optimization_budget_inclusion_constraint}.
      This follows 
      from 
      \Cref{theorem:CB-Multiple-Sums} by noting that for the GCB model \eqref{eqn:GCB_Model_PMF}, 
      the probabilities, derivatives, and first- and second-order moments are weighted 
      linear combinations of the corresponding values of the CB 
      model \eqref{eqn:CB_Model_Full_PMF}. 
      Note that the value of the upper bound $\widetilde{C}$ in 
      \eqref{eqn:exact_gradient_bounds_degenerate} itself is irrelevant here 
      because only the 
      existence of this finite constant guarantees convergence of a gradient-based optimization
      algorithm involving the exact gradient of the stochastic objective $\stochobj$.
      Convergence of the proposed \Cref{alg:probabilistic_binary_optimization}
      requires the stochastic estimate of the gradient to be 
      unbiased with bounded variance. 
      This is shown by \Cref{theorem:probabilistic_optimization_derivatives_bounds_stochastic}.

      \begin{theorem}\label{theorem:probabilistic_optimization_derivatives_bounds_stochastic}
        \begin{subequations}\label{eqn:probabilistic_optimization_derivatives}
        Let $\design\sim\CondProb{\design}{\designsum}$, and let 
          \begin{align}
          \vec{g}
            &= \nabla_{\hyperparam} 
              \Expect{\design \sim\CondProb{\design}{\designsum}}{\obj(\design)}
          \,, \\
          \widehat{\vec{g}}
            &:= \frac{1}{\Nens} \sum_{k=1}^{\Nens}
              \obj(\design[k]) \nabla \log \CondProb{\design[k]}{\designsum} \,,
          \end{align}
        \end{subequations}
        where $\CondProb{\design}{\designsum}$ refers to any of the 
        models \eqref{eqn:CB_Model_PMF}, \eqref{eqn:CB_Model_Full_PMF}, 
        or \eqref{eqn:GCB_Model_PMF}, respectively.
        Then 
        \begin{subequations} \label{eqn:probabilistic_optimization_derivatives_bounds_stochastic}
        \begin{align}
           \label{eqn:probabilistic_optimization_derivatives_bounds_stochastic_1}
          \Expect{}{\widehat{\vec{g}}} 
          &= \nabla_{\hyperparam} 
            \Expect{\design \sim\CondProb{\design}{\designsum}}{\obj(\design)}  \,, \\
           \label{eqn:probabilistic_optimization_derivatives_bounds_stochastic_2}
          \Expect{}{\widehat{\vec{g}}\tran \,\widehat{\vec{g}} }
            &\leq K_1 +  \vec{g}\tran \vec{g} \,, \quad 0 < K_1 < \infty \,.
        \end{align}
        \end{subequations}
      \end{theorem}
      \begin{proof}
        See \Cref{app:probabilistic_optimization_proofs}.
      \end{proof}

      \Cref{theorem:probabilistic_optimization_derivatives_bounds_stochastic}
      shows that the stochastic gradient employed in 
      \Cref{alg:probabilistic_binary_optimization} is an unbiased estimator and 
      satisfies Assumption (d)
      of \cite[Assumptions 4.2]{BertsekasTsitsiklis96}, which guarantees 
      convergence of the stochastic optimization algorithm; see e.g., 
      \cite{attia2022stochastic}.
      Note that the convergence of
      \Cref{alg:probabilistic_binary_optimization}, where
      baseline versions $\widehat{\vec{g}}^{\rm b}$ of the stochastic gradient
      are employed, is guaranteed by
      \Cref{theorem:probabilistic_optimization_derivatives_bounds_stochastic},
      whose variance bound applies to the baselined estimator because
      $\brVar{\widehat{\vec{g}}^{\rm b}} \leq \brVar{\widehat{\vec{g}}}$,
      as shown in \Cref{subsec:probabilistic_optimization_equality}.

      \begin{remark}[Convexity, local optimality, and initialization]\label{remark:local_optimality}
        Theorems~\ref{theorem:probabilistic_optimization_derivatives_bounds_exact}
        and~\ref{theorem:probabilistic_optimization_derivatives_bounds_stochastic}
        guarantee convergence of \Cref{alg:probabilistic_binary_optimization} to a
        \emph{stationary point} (local optimum) of $\stochobj(\hyperparam)$, not
        necessarily the global one.
        This is expected: the original binary problem is NP-hard, and
        \Cref{theorem:Binary_to_Probabilistic} relates the global optima of the two
        formulations, but global optimality of the probabilistic objective does not
        follow from gradient convergence alone.
        Regarding convexity: $\stochobj(\hyperparam)$ is a weighted sum of the objective
        values $\obj(\design[k])$ over all feasible binary designs, and is in general
        nonconvex in $\hyperparam$ unless $\obj$ has special structure (e.g., linear or
        concave in the design).
        Multiple stationary points can therefore arise, and the algorithm may converge
        to different local optima depending on initialization and the random sample sequence.
        We initialize $\hyperparam$ uniformly ($\hyperparam_i = 0.5$ for all $i$),
        which corresponds to a maximally uncertain prior over sensor locations and is a
        natural starting point.
        The numerical comparisons in \Cref{sec:numerical_comparison_relaxation} show that
        the proposed approach consistently produces high-quality designs across problem
        sizes, suggesting practical robustness to initialization even without global
        optimality guarantees.
      \end{remark}

  \section{Numerical Experiments}
    \label{sec:numerical_experiments}
    All experiments in this work use the advection-diffusion optimal sensor placement problem
    described in \Cref{subsec:AD_Setup}.
    \Cref{subsec:AD_Results} uses the classical A-optimal design criterion to
    empirically verify the proposed approach, and \Cref{subsec:scalability} examines
    its scalability with increasing design dimensionality.
    \Cref{sec:numerical_baseline_variance} verifies the need for variance reduction
    and confirms the effectiveness of the proposed optimal baseline estimates.
    \Cref{sec:numerical_A_minimization} demonstrates that the approach applies
    out of the box to a structurally different OED criterion, the Bayesian A-optimal design.
    \Cref{sec:numerical_comparison_relaxation} compares the proposed approach against
    design-space relaxation, showing its advantage in high-dimensional settings.
    All experiments are carried out using \pyoed \cite{attia2024pyoed}. 

    \subsection{Experimental setup}
      \label{subsec:AD_Setup}
      We use an optimal sensor placement problem based on an
      advection-diffusion simulation model widely used for experimental verification
      in the OED literature \cite{PetraStadler11,attia2018goal,attia2022optimal}.
      The advection-diffusion model \eqref{eqn:advection_diffusion} 
      simulates the spatiotemporal evolution of a
      contaminant field $u=\xcont(\vec{x}, t)$ in a closed spatial domain $\domain$
      over time interval $[0, T]$,
      \begin{equation}\label{eqn:advection_diffusion}
        \begin{aligned}
          \xcont_t - \kappa \Delta \xcont + \vec{v} \cdot \nabla \xcont &= 0     
            \quad \text{in } \domain \times [0,T],   \\
          \xcont(x,\,0) &= \theta \quad \text{in } \domain,  \\
          \kappa \nabla \xcont \cdot \vec{n} &= 0  
            \quad \text{on } \partial \domain \times [0,T]\,,
        \end{aligned}
      \end{equation}
      where
      we set the diffusivity to $\kappa=10^{-3}$, and assume a constant velocity field $\vec{v}$ 
      obtained by solving a steady Navier--Stokes equation with the side walls driving 
      the flow \cite{PetraStadler11}. 
      The inference problem seeks to retrieve the true 
      $\iparam^{\rm true}$
      given 
      \eqref{eqn:advection_diffusion} and sparse 
      observations. 
      The domain and the boundary are 
      shown in \Cref{fig:AD_candidate_sensors} along with 
      grid discretization, and candidate sensor locations which are discussed next.

      \begin{figure}[htbp!]
        \centering
        \includegraphics[width=0.24\textwidth]{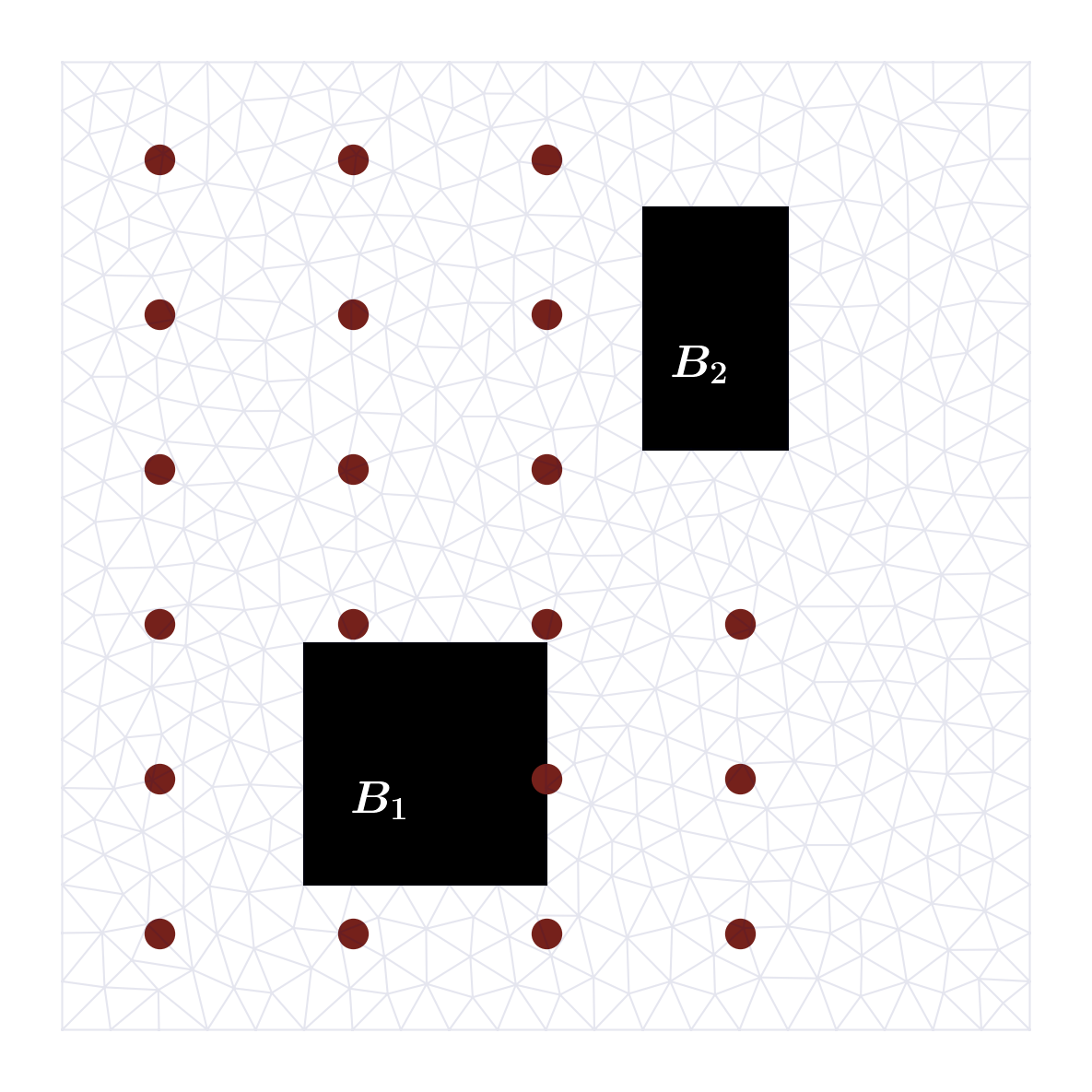}
        \hfill
        \includegraphics[width=0.24\textwidth]{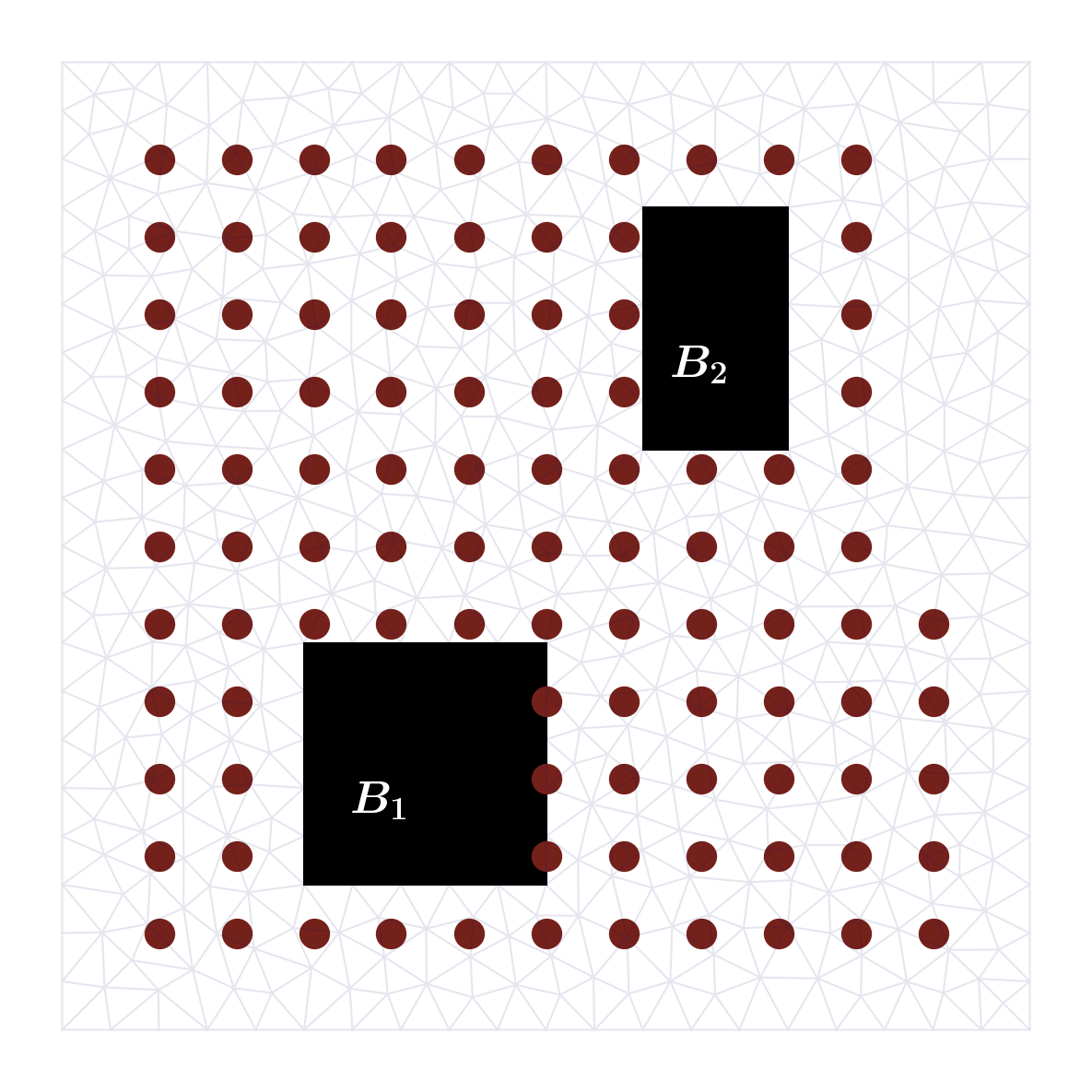}
        \hfill
        \includegraphics[width=0.24\textwidth]{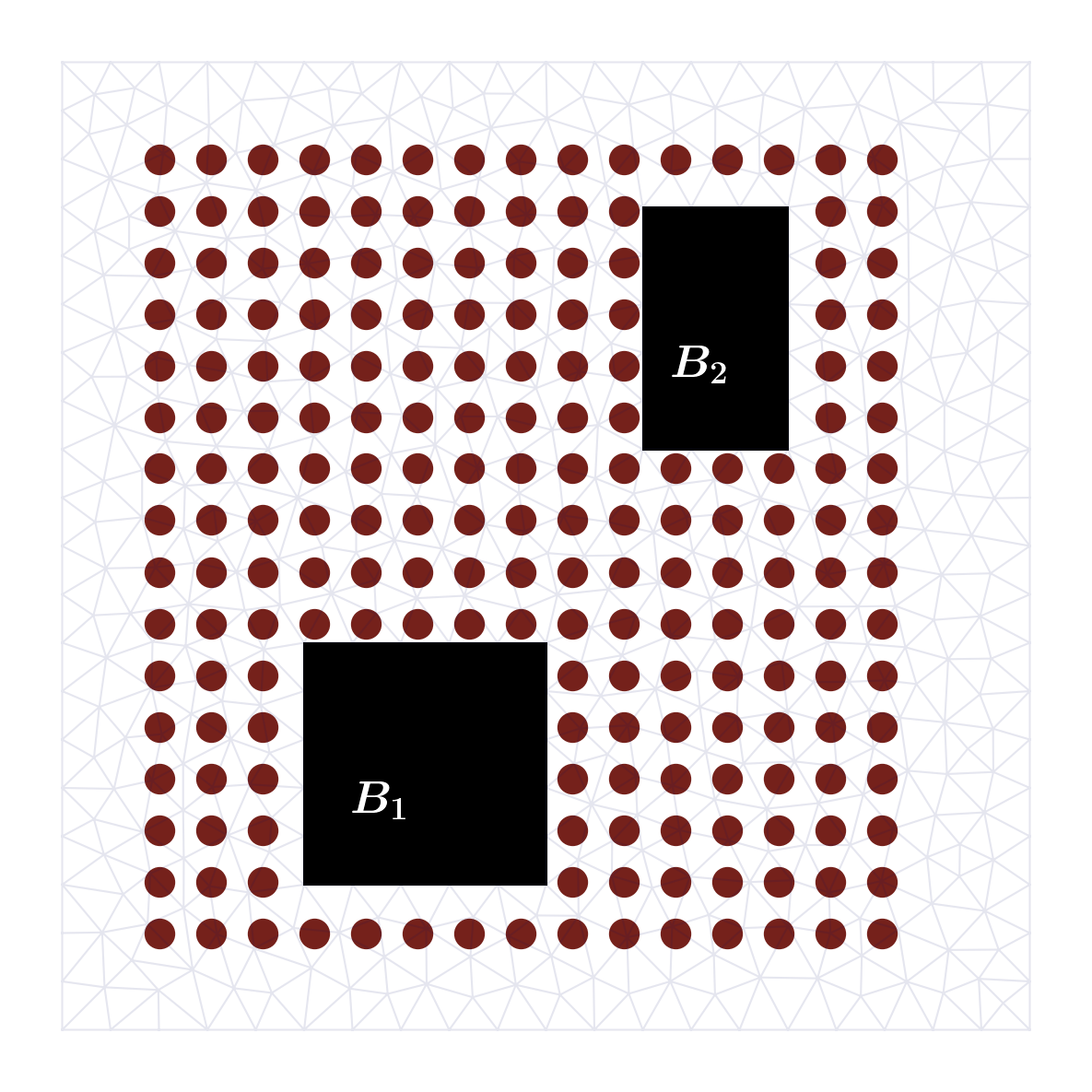}
        \hfill
        \includegraphics[width=0.24\textwidth]{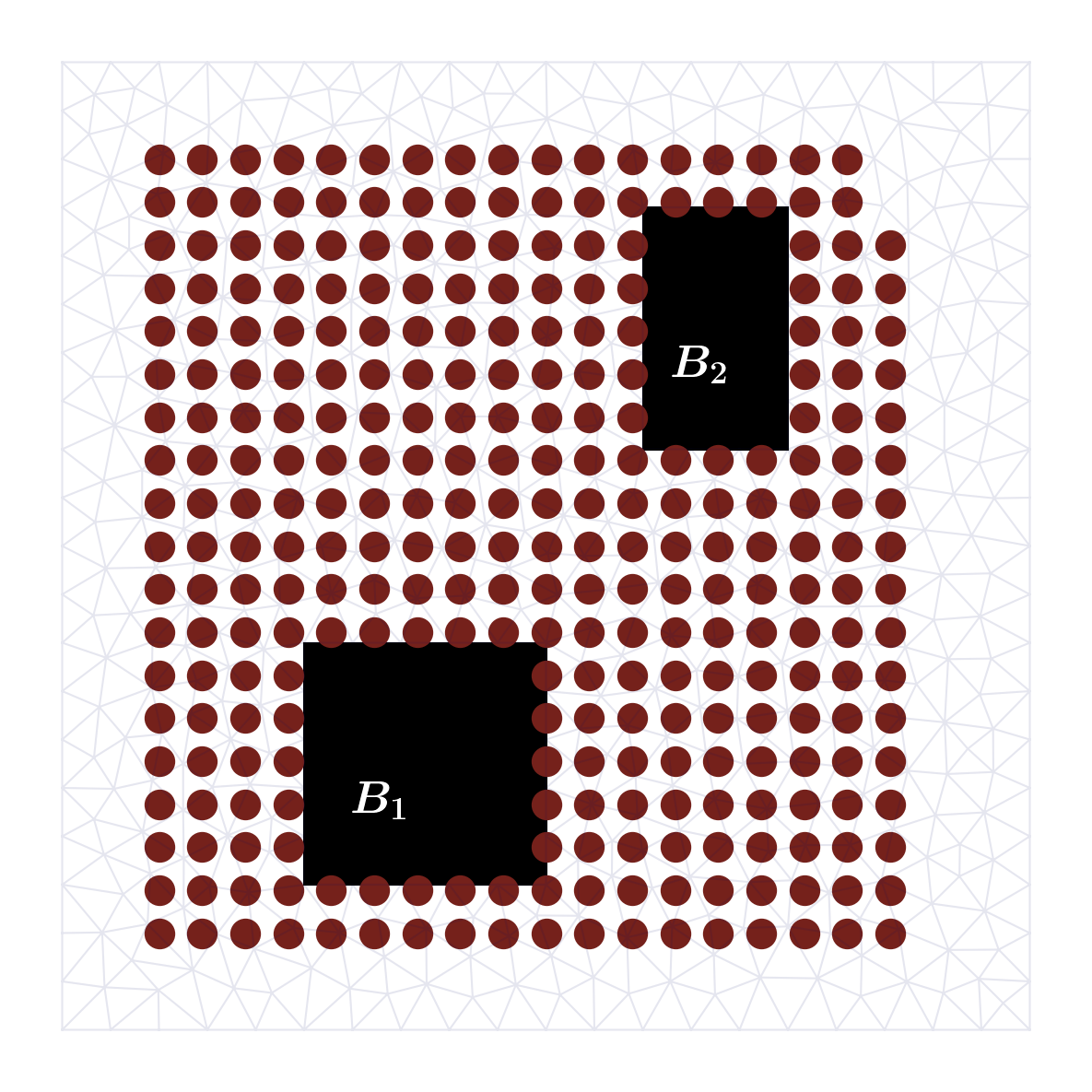}
        \caption{Candidate sensor locations $\Nsens=20, 100, 200, 300$, from left to right, respectively. 
          }
        \label{fig:AD_candidate_sensors} 
      \end{figure}
      %

      \paragraph{The prior.}
        When the Bayesian approach is adopted, a prior must be utilized to
        formulate the posterior covariance matrix. Following
        \cite{bui2013computational,attia2018goal,attia2022optimal}, the prior
        distribution of the inversion parameter is
        $\iparam \sim \GM{\iparb}{\Cparampriormat}$ with prior mean
        $\iparb \equiv 0.5$ (constant on $\domain$) and prior covariance
        $\Cparampriormat$ given by the discretization of the squared inverse
        elliptic operator $\mathcal{A}$: 
        \begin{equation}\label{eqn:prior_operator}
          \Cparampriormat \;=\; \mathcal{A}^{-2} \,, \qquad
          \mathcal{A} \;=\; \delta\,I \;-\; \gamma\,\Delta \,,
        \end{equation}
        with homogeneous Neumann boundary conditions on $\partial\domain$
        and coefficients $\gamma = 1$ and $\delta = 16$ used in all numerical
        experiments reported in this section.
        The covariance \eqref{eqn:prior_operator} is a discrete realization
        of a Mat\'ern--Whittle Gaussian field in the sense of
        \cite{lindgren2011explicit}, with smoothness parameter
        $\nu = 2 - d/2 = 1$ in the present two-dimensional setting.
        The two coefficients $(\gamma, \delta)$ admit the standard
        stochastic partial differential equation (SPDE) interpretation.
        The ratio $\gamma/\delta$ sets the Laplacian length scale
        $\ell := \sqrt{\gamma/\delta} = 0.25$ in the units of $\domain$.
        The corresponding Mat\'ern $0.1$-correlation distance is
        $\sqrt{8\nu}\,\ell \approx 0.71$.
        The product $\gamma\delta$ enters inversely in the pointwise
        prior variance, which evaluates to
        $1/(4\pi\,\gamma\,\delta) \approx 4.97 \times 10^{-3}$ for the
        present values via the Mat\'ern--Whittle marginal-variance
        formula \cite{lindgren2011explicit}.
        The finite-element realization of the prior follows the SPDE
        construction of \cite{lindgren2011explicit}.
        At the continuous level, the prior field $\iparam$ is the
        solution of the SPDE
        $\mathcal{A}\,\iparam = \mathcal{W}$, where $\mathcal{W}$ is
        spatial Gaussian white noise.
        The discretization uses the standard finite-element mass and
        stiffness matrices $\mat{M}$ and $\mat{K}$ of the parameter
        space.
        These are combined into the matrix
        $\mat{B} := \delta\,\mat{M} + \gamma\,\mat{K}$, which assembles
        the bilinear form of the operator $\mathcal{A}$.
        The resulting discrete prior covariance is then
        $\Cparampriormat = \mat{B}^{-1}\,\mat{M}\,\mat{B}^{-1}$.

      \paragraph{Forward and adjoint operators.}
        The forward operator $\F$ (mapping from the inference parameter to the observations)
        represents a simulation of \eqref{eqn:advection_diffusion} 
        followed by a restriction of the state $\xcont$ to the spatiotemporal domain.
        The spatial domain is discretized following a finite-element approach, and the 
        model adjoint is given by 
        $\F\adj \!:= \!\mat{M}\inv\mat{F}\tran$, where
        $\mat{M}$ is the finite-element mass matrix.

      \paragraph{Observational setup.}
        We consider 
        $\Nsens$ uniformly distributed candidate locations
        where the sensors are allowed to be placed.
        Here we consider $\Nsens=20, 100, 200, 300$ candidate 
        locations, respectively, as shown in \Cref{fig:AD_candidate_sensors}. 
        For simplicity, observational data 
        is collected at $\nobstimes = 3$ time instances
        $t_s = (1 + s)\,\Delta t$, $s = 0, 1, 2$,
        with model simulation timestep $\Delta t = 0.5$.
        
        The 
        observations are corrupted with Gaussian noise
        $\GM{\vec{0}}{\Cobsnoise}$, with 
        a covariance matrix 
        $\Cobsnoise:= \sigma^2 \mat{I}$. 
        Here $\mat{I}\in\Rnum^{\Nobs\times\Nobs}$ is the identity matrix with  
        $\Nobs=\Nsens \times \nobstimes$, 
        and we set the standard deviation of the observation error to $0.01$.

        \paragraph{Configurations of the probabilistic optimization algorithm.}
          We carry out numerical experiments to find the optimal placement of
          $10$ sensors (out of $\Nsens$ candidate locations) 
          in the domain $\domain$ such that the optimal design 
          primarily optimizes a scalar summary of the Fisher information matrix $\mathcal{I}(\design)$.
          The specific formulation of the optimization problem is explained in each experiment. 
          
          In all experiments we use the following setup for the probabilistic optimization 
          approach described by \Cref{alg:probabilistic_binary_optimization}.
          The stochastic gradient sample size is set to $\Nens=100$,
          and the learning rate is set to $\eta = 0.5$.
          All experiments use the optimal per-component baseline
          (\Cref{prop:optimal_percomponent_baseline}) for variance reduction of the
          stochastic gradient estimator, unless otherwise is stated explicitly.
          The optimal design is generally obtained by comparing the objective
          value of $100$ points sampled from the final policy.
          The algorithm terminates if the maximum number of iterations (set to $500$)
          is reached or if the magnitude of the projected
          gradient \eqref{eqn:scaling_projector} is below \textsc{pgtol} $=10^{-8}$. 
          %
    
          \Cref{alg:probabilistic_binary_optimization} employs random samples from the 
          probabilistic policy (Step \ref{algstep:random_sample}) at each
          iteration.
          In our experiments 
          we use the same random seed for reproducibility of results.

    \subsection{Classical A-optimal design}
      \label{subsec:AD_Results}
      
      \paragraph{The optimization problem.}
        %
        Here the optimal sensor placement problem is defined as: 
        \begin{equation}\label{eqn:OED_AD_optimization_equality_max}
          \argmax_{\design \in \{0, 1\}^{\Nsens} }{
            \obj(\design) :=  \Trace{\!
             \F\adj \Pseudoinv{\Diag{\design}\Cobsnoise\Diag{\design}} \F
            \!}
          }
          \quad \textrm{s.t.} \quad   
              \wnorm{\design}{0} = 10 
            \,,
        \end{equation}
        where $\Diag{\design}$ is a diagonal matrix with $\design$ on its main diagonal
        and $\dagger$ is the matrix pseudo-inverse.

        For the diagonal observation-error covariance $\Cobsnoise$
        used in the present experiments and for the binary design
        $\design \in \{0,1\}^{\Nsens}$, the pseudoinverse term in
        \eqref{eqn:OED_AD_optimization_equality_max} simplifies to a
        precision-weighted form,
        \begin{equation}\label{eqn:pseudoinv_equivalence}
          \Pseudoinv{\Diag{\design}\,\Cobsnoise\,\Diag{\design}}
            \;=\;
          \Cobsnoise^{-1/2}\,\Diag{\design}\,\Cobsnoise^{-1/2}
          \,,
        \end{equation}
        because $\Diag{\design}^2 = \Diag{\design}$ for binary
        $\design$ and $\Diag{\design}$ commutes with diagonal
        $\Cobsnoise$.
        We nevertheless retain the pseudoinverse formulation in
        \eqref{eqn:OED_AD_optimization_equality_max} and in
        \eqref{eqn:OED_AD_optimization_equality_min} below.
        That formulation remains valid for general $\Cobsnoise$,
        including the correlated-noise setting treated in
        \cite{attia2022optimal}.
        The reader interested in either setting can therefore extract
        the objective directly from
        \eqref{eqn:OED_AD_optimization_equality_max} or
        \eqref{eqn:OED_AD_optimization_equality_min}.
        Specifically, the simpler form
        \eqref{eqn:pseudoinv_equivalence} applies whenever
        $\Cobsnoise$ is diagonal and $\design$ is binary, while the
        pseudoinverse form applies equally to correlated and uncorrelated observation errors. 

        The objective of \eqref{eqn:OED_AD_optimization_equality_max} is to find 
        the optimal placement of $10$ sensors (out of $\Nsens$ candidate locations) 
        in the domain $\domain$ 
        such that the optimal design 
        maximizes the trace of the Fisher information matrix $\mathcal{I}(\design)$.
        Thus, the solution of \eqref{eqn:OED_AD_optimization_equality_max} is an 
        A-optimal design \cite{Pukelsheim93,attia2022optimal}.
        %
 
      \paragraph{Results with $\Nsens=20$ candidate locations.}
        \Cref{fig:OED_AD_SIZE_20_BUDGETS_10_1} (left) shows 
        the objective value at the optimal solution, 
        the best (highest) objective value explored at each iteration,
        and 
        the estimate of the stochastic objective $\Expect{}{\obj}$ 
        at consecutive iterations of the optimization procedure.
        The associated box plot shows the objective values $\obj$
        corresponding to a uniform random sample of $1000$ realizations of $\design$. 
        The random sample is generated by sampling the CB model with $\hyperparam=(0.5, \ldots, 0.5)\tran$.
        This shows that the optimization procedure achieves a superior result (better than
        the best random sample) in a small number of iterations.
        
        The value of the 
        parameter 
        $\hyperparam$ over consecutive iterations is shown in
        \Cref{fig:OED_AD_SIZE_20_BUDGETS_10_1} (right)
        showing that the optimization
        procedure 
        quickly identifies entries of $\design$ that should
        be associated with higher probabilities and those that correspond to lower probabilities.
        The fact that some success probabilities are not converging to a degenerate probability $\{0, 1\}$ 
        indicates that the global optimum is not unique, and the algorithm is converging to an 
        optimal policy that would ideally cover those global optima as part of its support. 

        \begin{figure}[htbp!]
          \centering
          \includegraphics[width=0.50\textwidth]{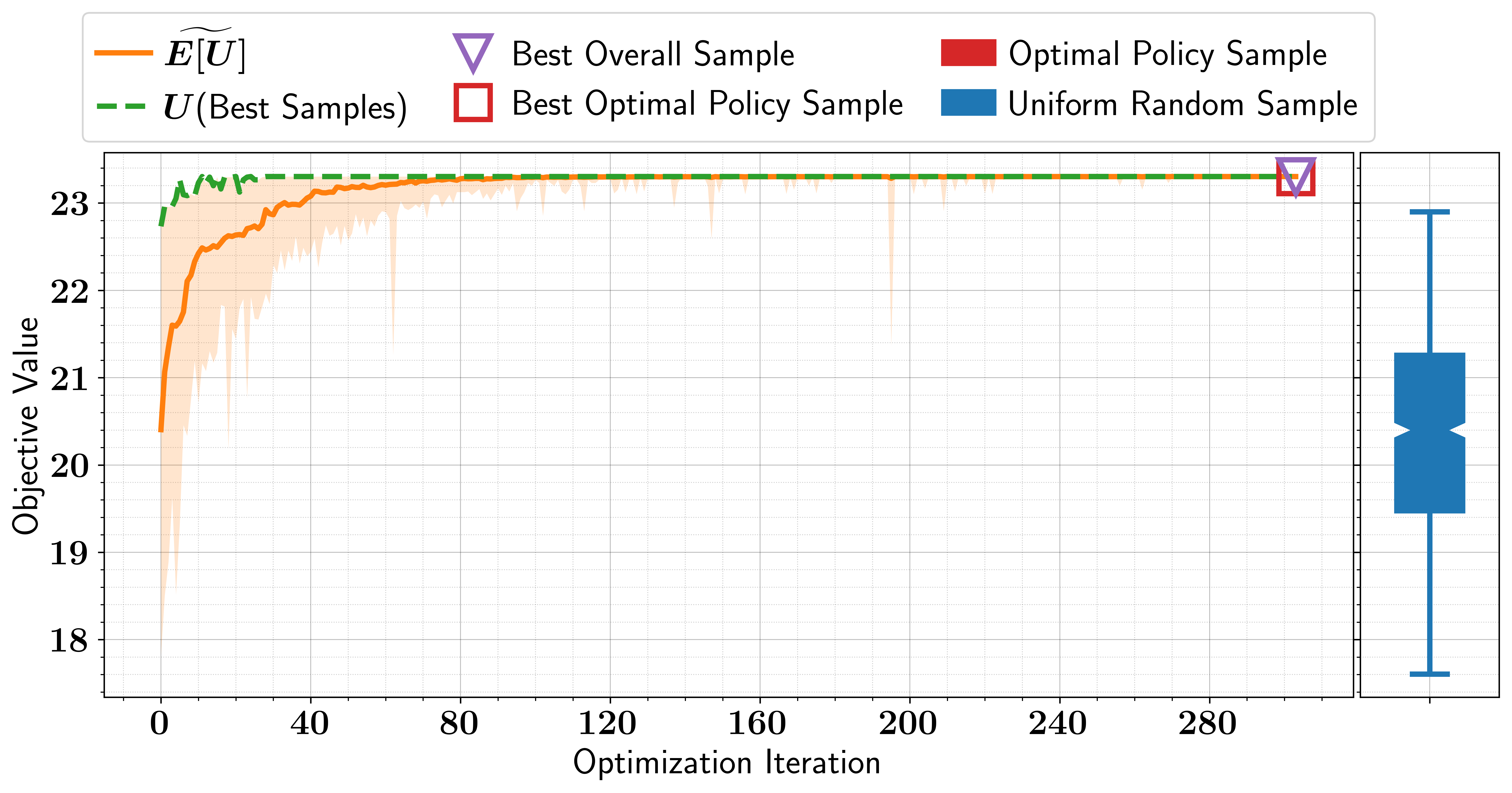}
          \hfill
          \includegraphics[width=0.47\textwidth]{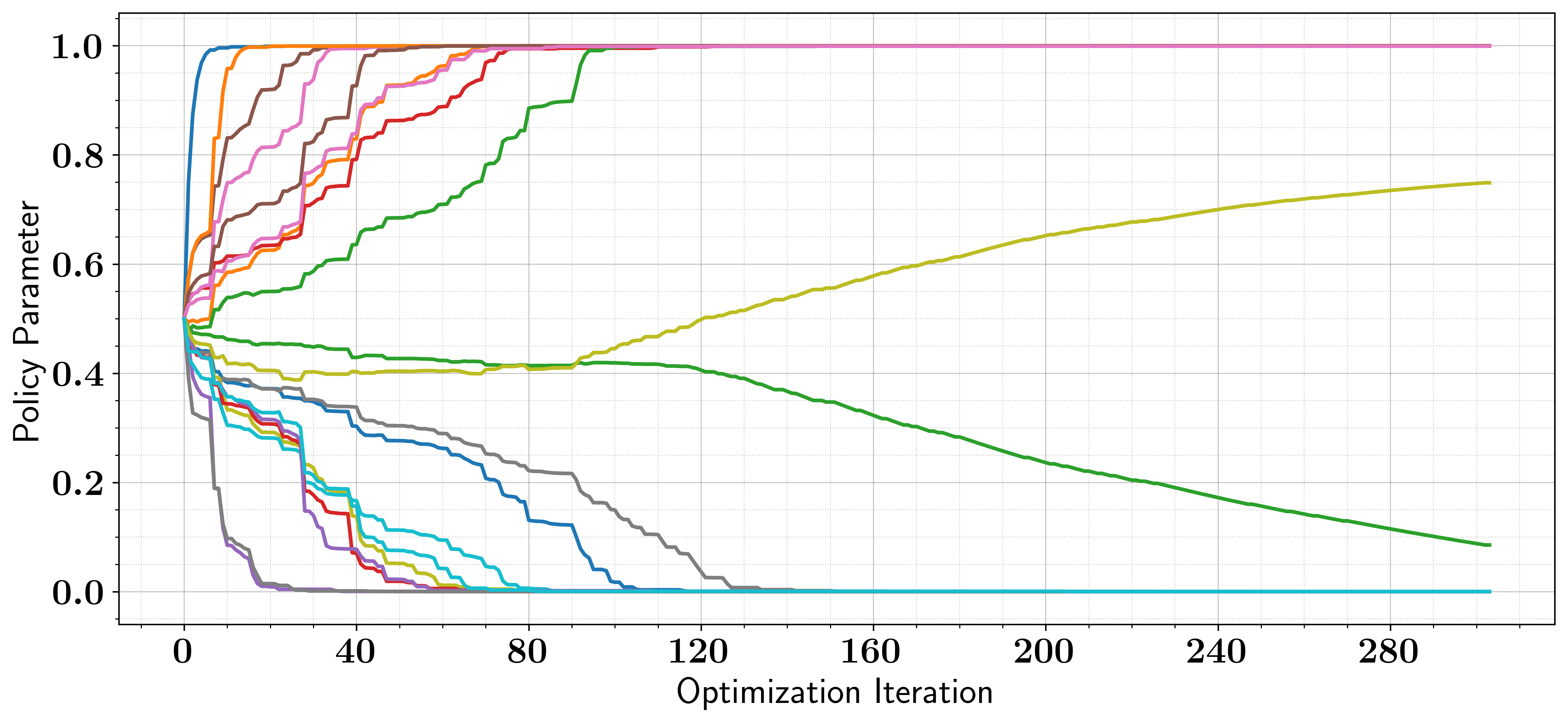}
          \caption{
            Behavior of \Cref{alg:probabilistic_binary_optimization} for
            solving \eqref{eqn:OED_AD_optimization_equality_max} over consecutive iterations,
            with $\Nsens=20$ as in \Cref{fig:AD_candidate_sensors}.
            Left: optimization progress as a function of iteration number (x-axis).
            The solid curve shows the per-iteration estimate
            $\widetilde{\Expect{}{\obj}}$ of the stochastic objective $\stochobj$,
            averaged over the gradient sample of size $100$.
            The dashed curve shows the best (largest) $\obj$ value found in that
            iteration's sample.
            The red square marks the best policy sample from the \emph{terminal}
            policy $\hyperparam\opt$; the purple triangle marks the best overall
            sample across \emph{all} iterations---when the two differ, the triangle
            provides a better fallback design in case of convergence to a local optimum.
            A box plot shows a uniform random sample of $1000$ feasible designs
            ($\hyperparam_i=0.5$, $i=1,\ldots,\Nsens$) for reference.
            Right: values of each entry of $\hyperparam$ over iterations.
            }
          \label{fig:OED_AD_SIZE_20_BUDGETS_10_1}
        \end{figure}

        \Cref{fig:OED_AD_SIZE_20_BUDGETS_10_1} (left) also shows that 
        as \Cref{alg:probabilistic_binary_optimization} iterates, 
        the variance of the stochastic gradient sample reduces, 
        indicating progression toward a local optimum.
        While \Cref{alg:probabilistic_binary_optimization} produces results 
        (e.g., an optimal solution) slightly
        better than the uniform random sample, 
        the algorithm beats the best random sample after only one iteration.
        Note that one iteration of the algorithm costs $\Nens=100$ evaluations of 
        the objective $\obj$ while the size of the uniform random sample here is $1000$. 
        The superiority of the proposed approach becomes clear as 
        the dimensionality of the problem increases as discussed below.
     
        The algorithm terminates after $307$ iterations; however,
        the major updates to the parameter happen at the first few iterations.
        This is demonstrated by \Cref{fig:OED_AD_SIZE_20_BUDGETS_10_1} (right),
        as well as \Cref{fig:OED_AD_SIZE_20_BUDGETS_10_2} (left) which shows the
        step update over consecutive iterations.
        Moreover, the algorithm keeps track of sampled realizations of $\design$
        along with the corresponding objective values $\obj(\design)$.
        As the algorithm proceeds, the probabilities are updated and are generally pushed
        toward the bounds $\{0, 1\}$, thus promoting realizations of $\design$ it has
        previously explored.
        This is demonstrated by \Cref{fig:OED_AD_SIZE_20_BUDGETS_10_2} (right), which
        shows the number of new evaluations of the objective $\obj$ at each iteration.

        \begin{figure}[htbp!]
          \centering
          \includegraphics[width=0.48\textwidth]{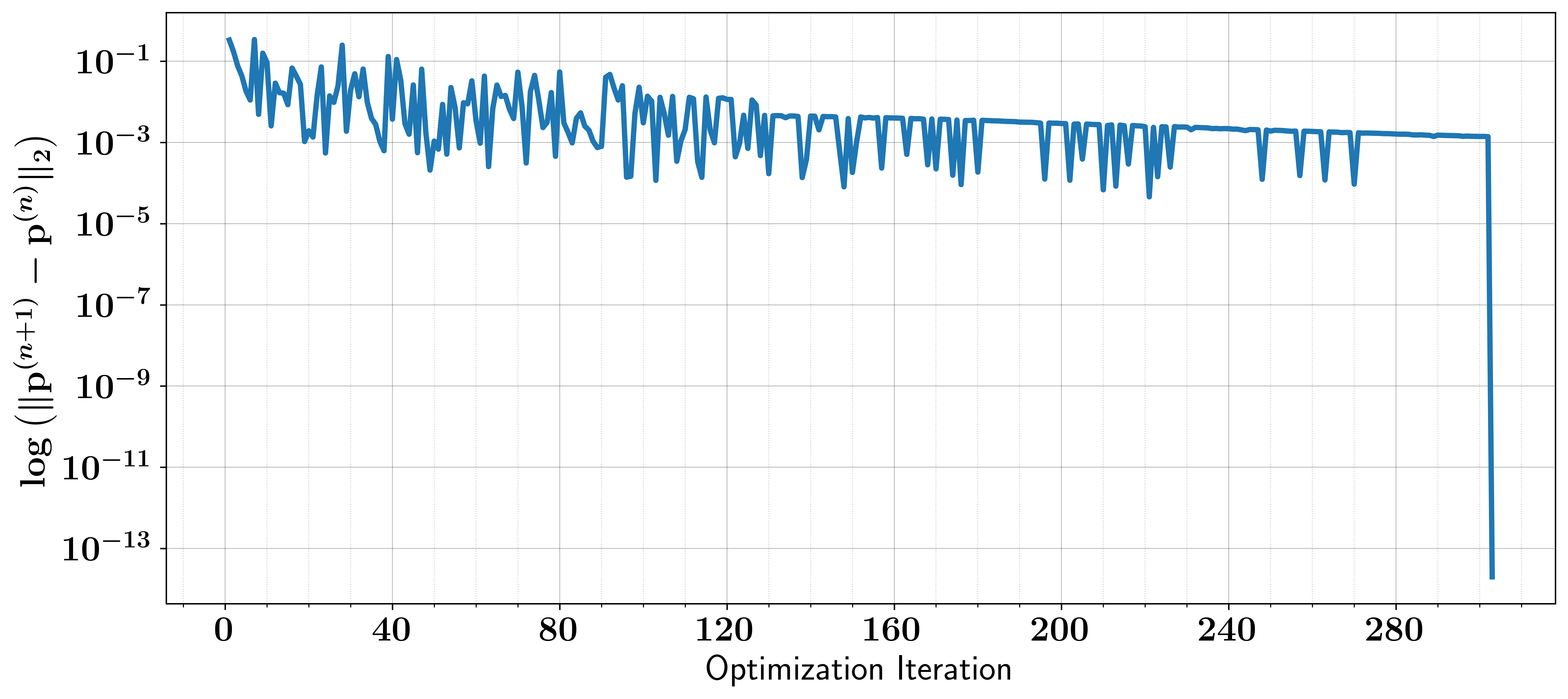}
          \hfill
          \includegraphics[width=0.49\textwidth]{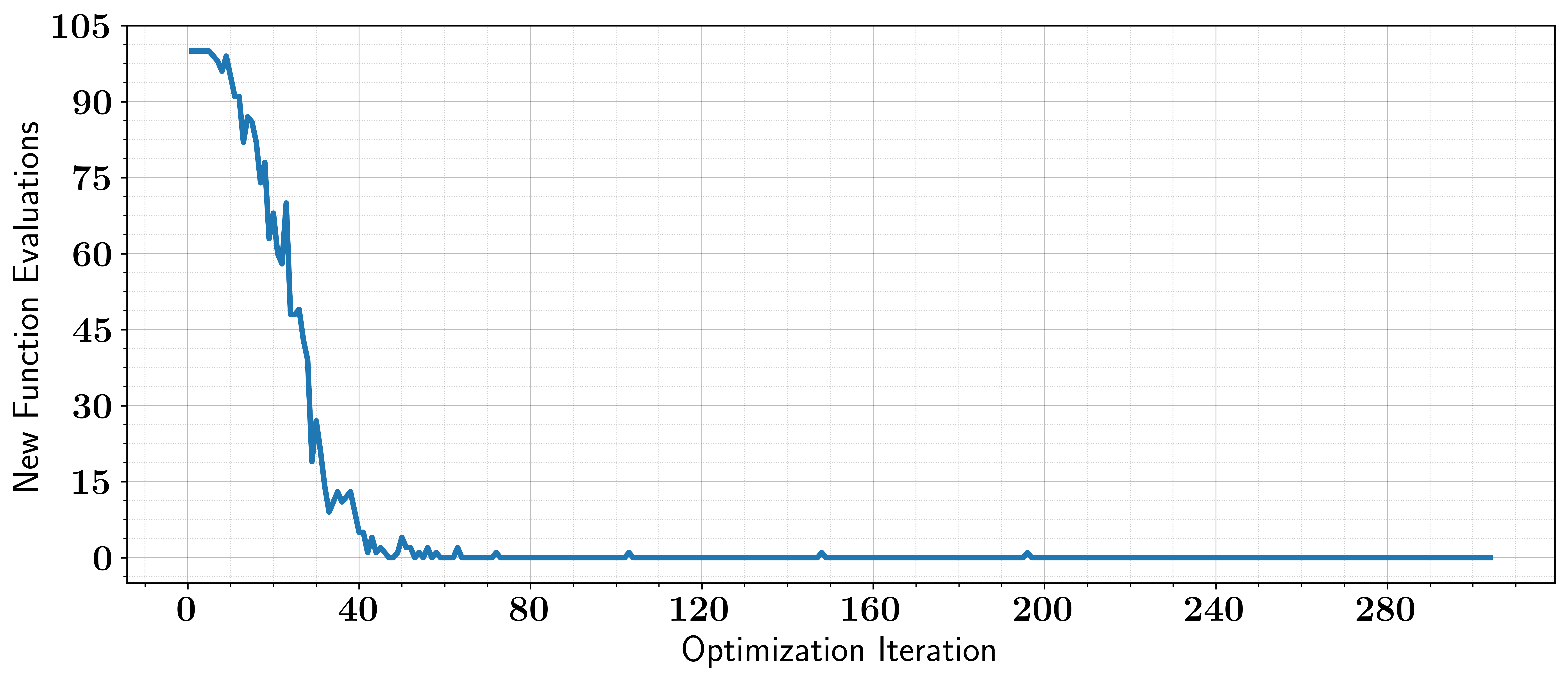}
          \caption{Left: 
            Norm of the update 
            over consecutive  iterations.
            Right: Number of new function evaluations, that is, 
            the number of evaluations of $\obj$ for realizations of $\design$ that have 
            not been previously explored by the optimizer. 
          }
          \label{fig:OED_AD_SIZE_20_BUDGETS_10_2} 
        \end{figure}

        Here the number of feasible designs 
        is $\binom{20}{10}=184,756$, which enables conducting a brute-force 
        search to benchmark our results as shown in 
        \Cref{fig:OED_AD_SIZE_20_BUDGETS_10_4}.
        \Cref{fig:OED_AD_SIZE_20_BUDGETS_10_4} (left) shows that the sample drawn
        from the optimal policy $\hyperparam\opt$ concentrates near the global optimum
        value (dashed line), and the best policy sample (red square) attains 
        the global optimum.
        In fact, the sample (of size $100$) generated from the optimal policy contain only 
        two unique designs with objective values almost identical to the global optimal value.
        These two experimental designs 
        correspond to the sensor placements shown in \Cref{fig:OED_AD_SIZE_20_BUDGETS_10_4} 
        (middle and right). The sensor placement is overlaid on the interpolated 
        policy parameter $\hyperparam\opt$ highlighting that the optimal 
        policy is almost degenerate.
        
        \begin{figure}[htbp!]
          \centering
          \includegraphics[width=0.48\textwidth]{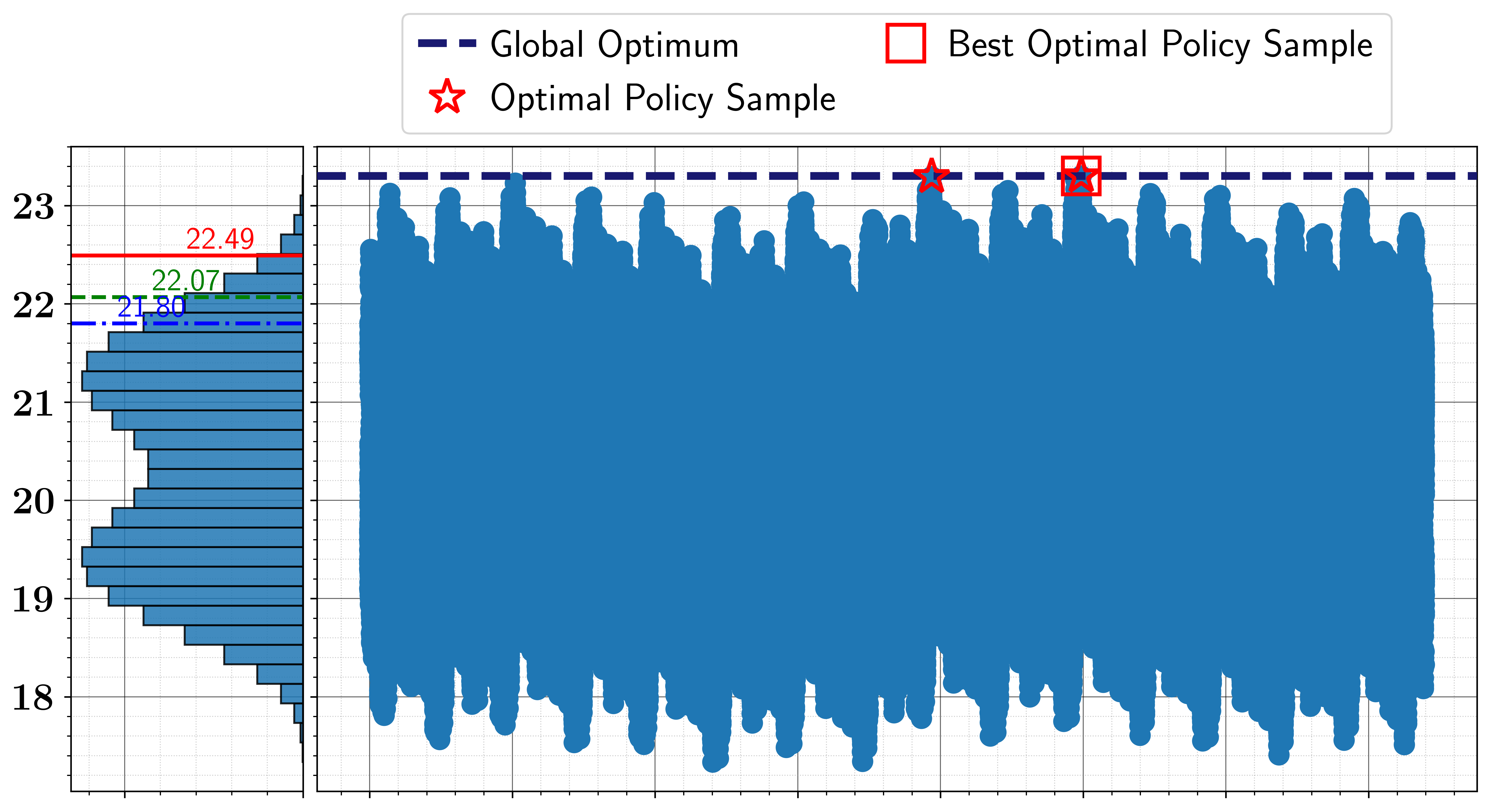}
          \hfill
          \includegraphics[width=0.25\textwidth]{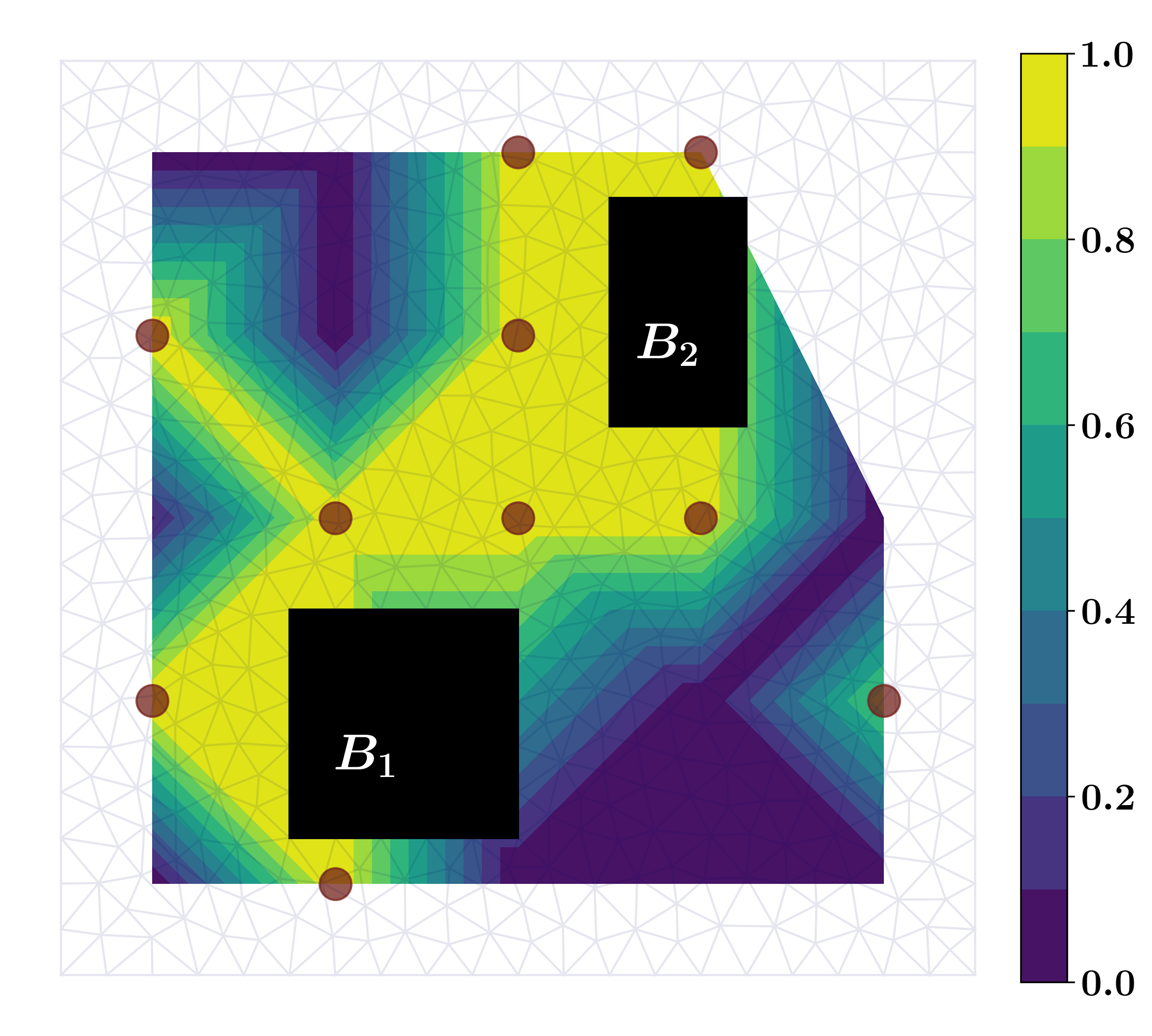}
          \hfill
          \includegraphics[width=0.25\textwidth]{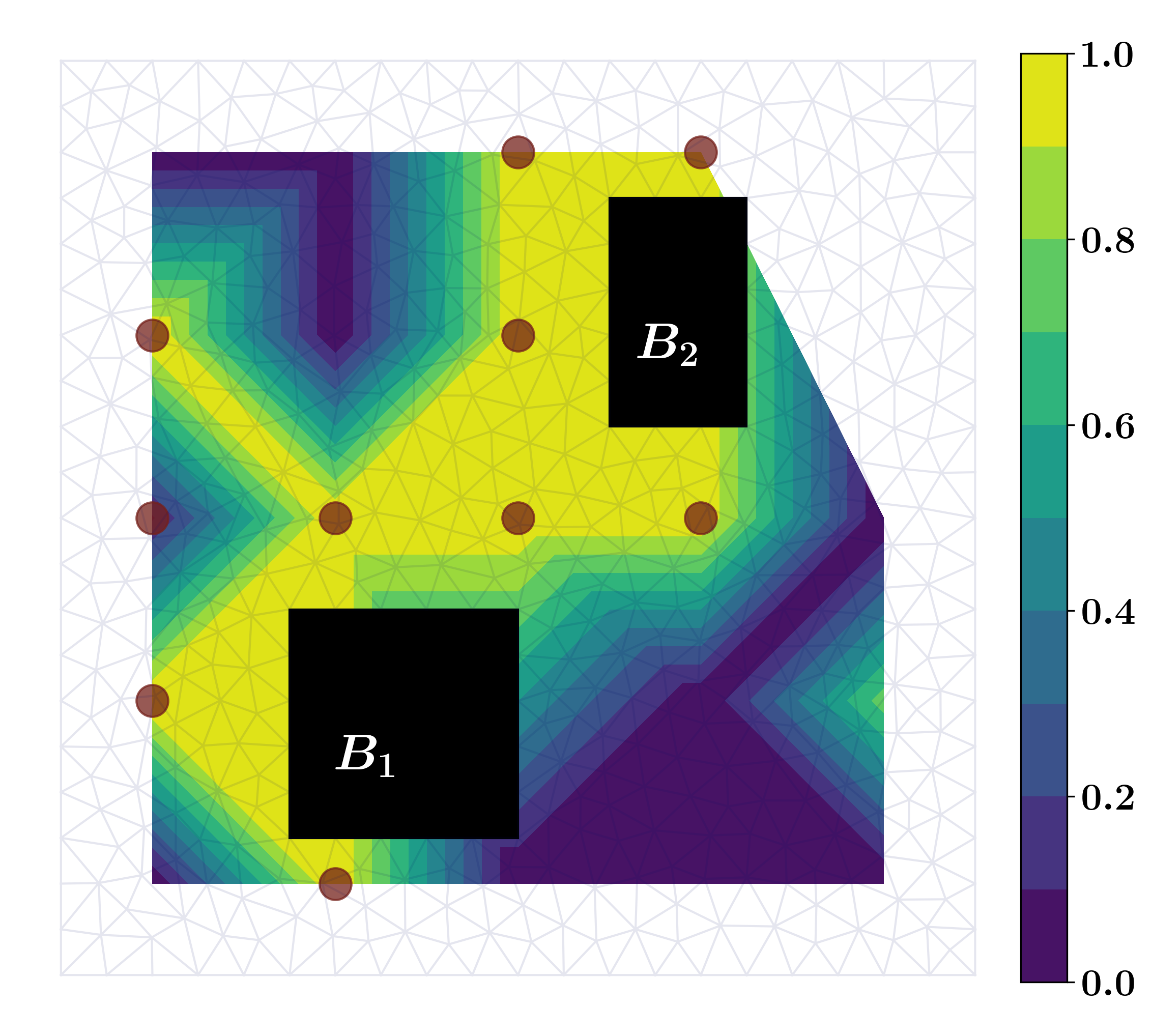}
          \caption{Results of applying \Cref{alg:probabilistic_binary_optimization} to
            solve \eqref{eqn:OED_AD_optimization_equality_max} compared with
            the brute-force search of all $\binom{20}{10}=184{,}756$ feasible designs.
            Left: objective value of every feasible $\design$ (blue dots);
            a sample of size $100$ drawn from the optimal policy $\hyperparam\opt$
            is shown as red stars, and the best realization among that sample
            (best policy sample) is marked by a red square.
            The dashed line indicates the global optimum value.
            Middle: global optimum solution which is also discovered by sampling the optimal policy.
            Right: the second unique sample generated by sampling the optimal policy.
            Both samples yield almost identical objective value with optimality gaps $0$, 
            and $3.8\times10^{-9}$,
            respectively.
            The two designs are plotted as sensor placement overlaid on the interpolated optimal policy
            parameter $\hyperparam\opt$.
            }
          \label{fig:OED_AD_SIZE_20_BUDGETS_10_4}
        \end{figure}
        %

    \subsection{Performance and scalability.}
    \label{subsec:scalability}
      Here we study the scalability of the proposed approach in terms of both 
      the performance and the computational cost.
      First we allow increasing design space $\Nsens$ while keeping the budget
      size $\designsumval$ fixed to analyze the performance of the proposed 
      approach with increasing design space dimensionality. 
      Additionally, we fix the design space dimensionality and increase the
      budget size in a computational cost study.
        
      \subsubsection{Increasing design dimensionality.}
        We run \Cref{alg:probabilistic_binary_optimization}
        for increasing cardinality $\Nsens$.
        Results obtained for $\Nsens$ set to $100$, $200$, and $300$ are shown in
        \Cref{fig:OED_AD_SIZE_100_BUDGETS_10},
        \Cref{fig:OED_AD_SIZE_200_BUDGETS_10}, and
        \Cref{fig:OED_AD_SIZE_300_BUDGETS_10}, respectively.
        These results show consistent behavior of the optimization procedure as the
        size of the problem (number of candidate sensor locations) increases.
        Moreover, the results show that the most likely place (highly probable) to place sensors
        is near the center of the domain concentrated at the sides of the two buildings 
        which is consistent with the results obtained with the coarse experiments; as shown
        in \Cref{fig:OED_AD_SIZE_20_BUDGETS_10_4}.
        The values of the CB model parameters (success probabilities) 
        $\hyperparam\opt$ returned by the optimizer and interpolated over the 
        domain show that the resulting policy is non-degenerate,
        which indicates that many candidate designs achieve an objective value 
        near the global optimum value.

        \begin{figure}[htbp!]
          \centering
          \includegraphics[width=0.50\textwidth]{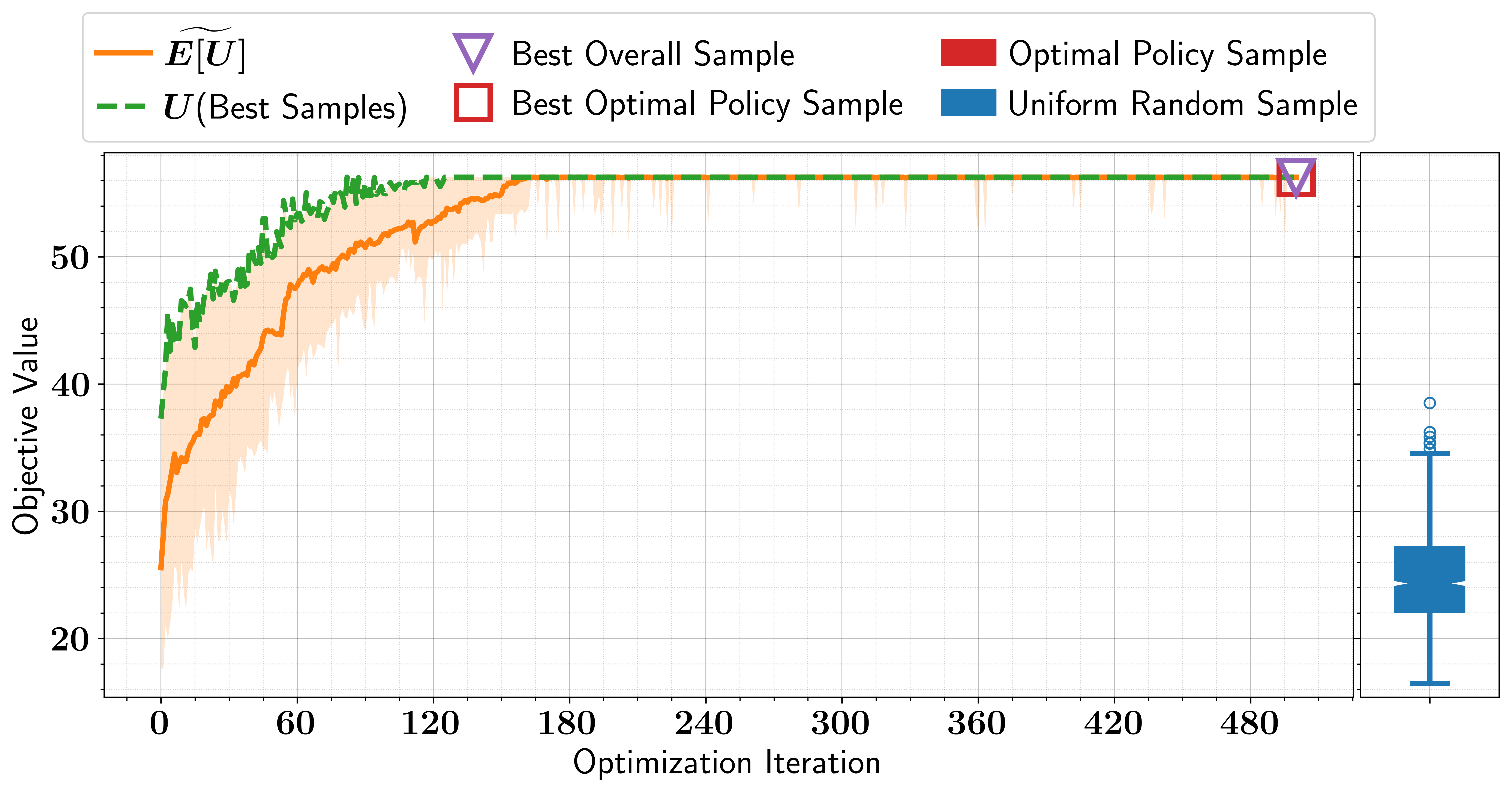}
          \qquad
          \includegraphics[width=0.30\textwidth]{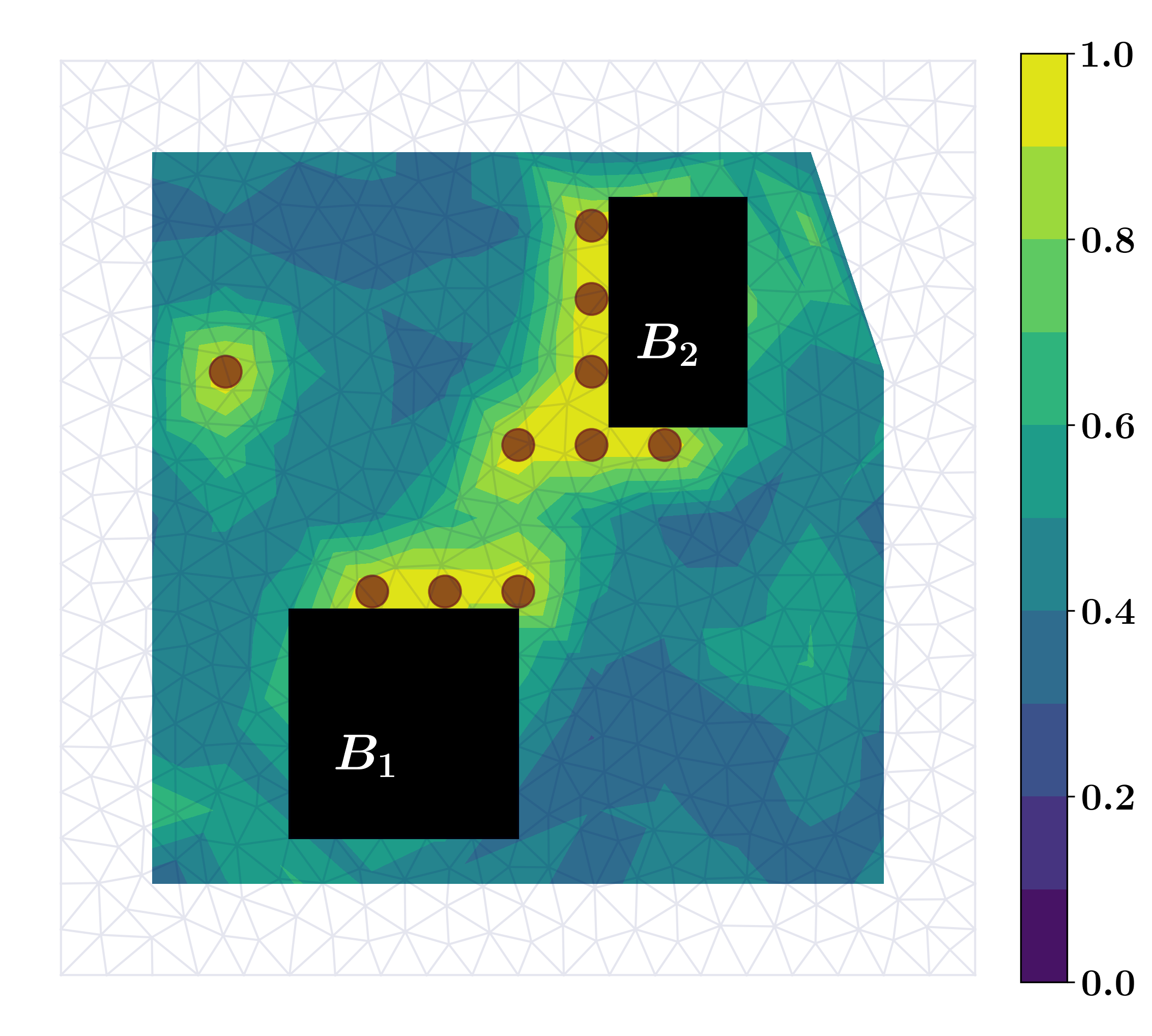}
          \caption{Behavior of \Cref{alg:probabilistic_binary_optimization} for
            solving \eqref{eqn:OED_AD_optimization_equality_max} with $\Nsens=100$.
            Left: optimization progress (see \Cref{fig:OED_AD_SIZE_20_BUDGETS_10_1} for details).
            Right: the best policy sample overlaid on the interpolated policy
            parameter $\hyperparam\opt$.
            }
          \label{fig:OED_AD_SIZE_100_BUDGETS_10} 
        \end{figure}
        \begin{figure}[htbp!]
          \centering
          \includegraphics[width=0.50\textwidth]{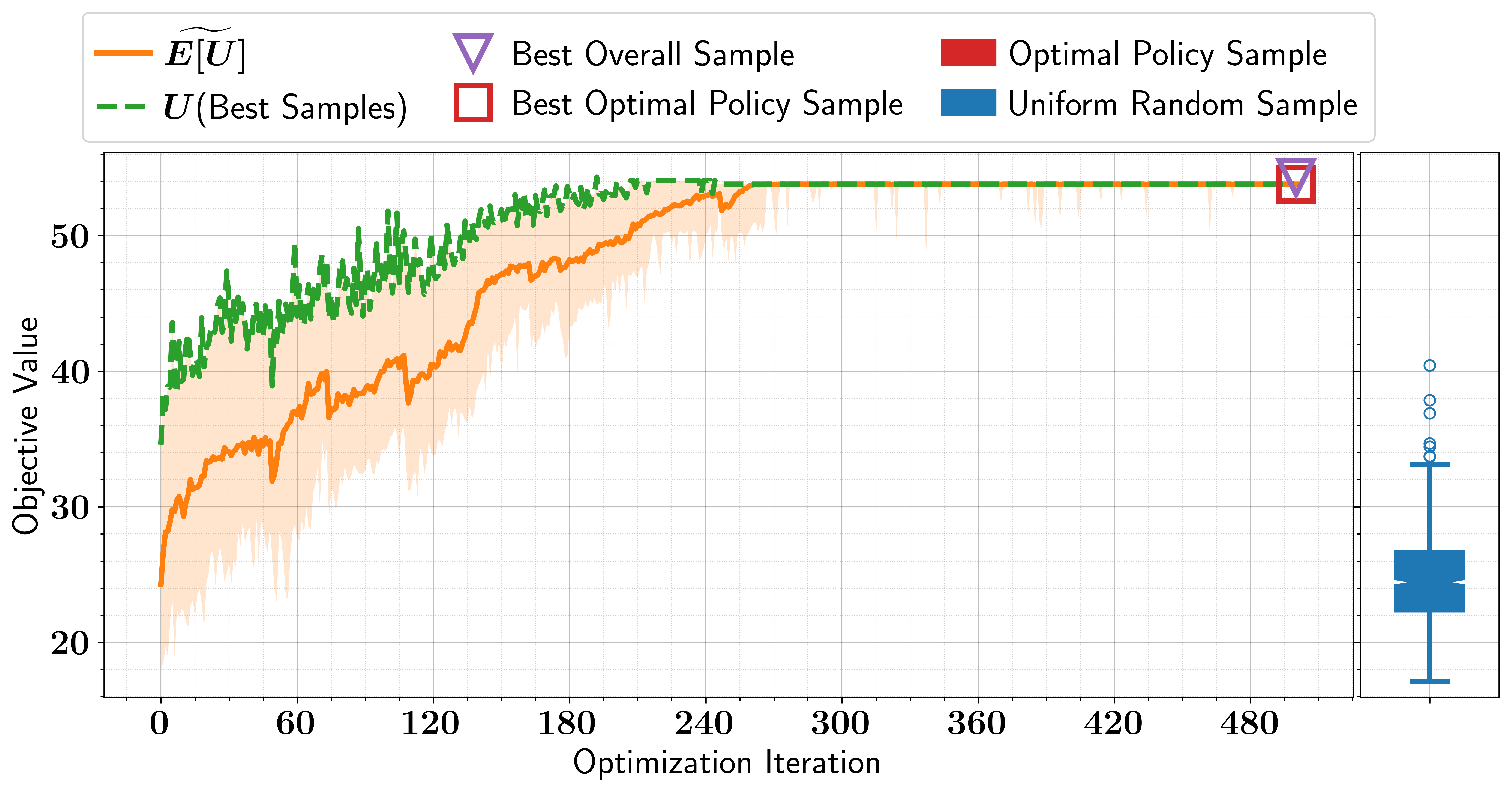}
          \quad
          \includegraphics[width=0.30\textwidth]{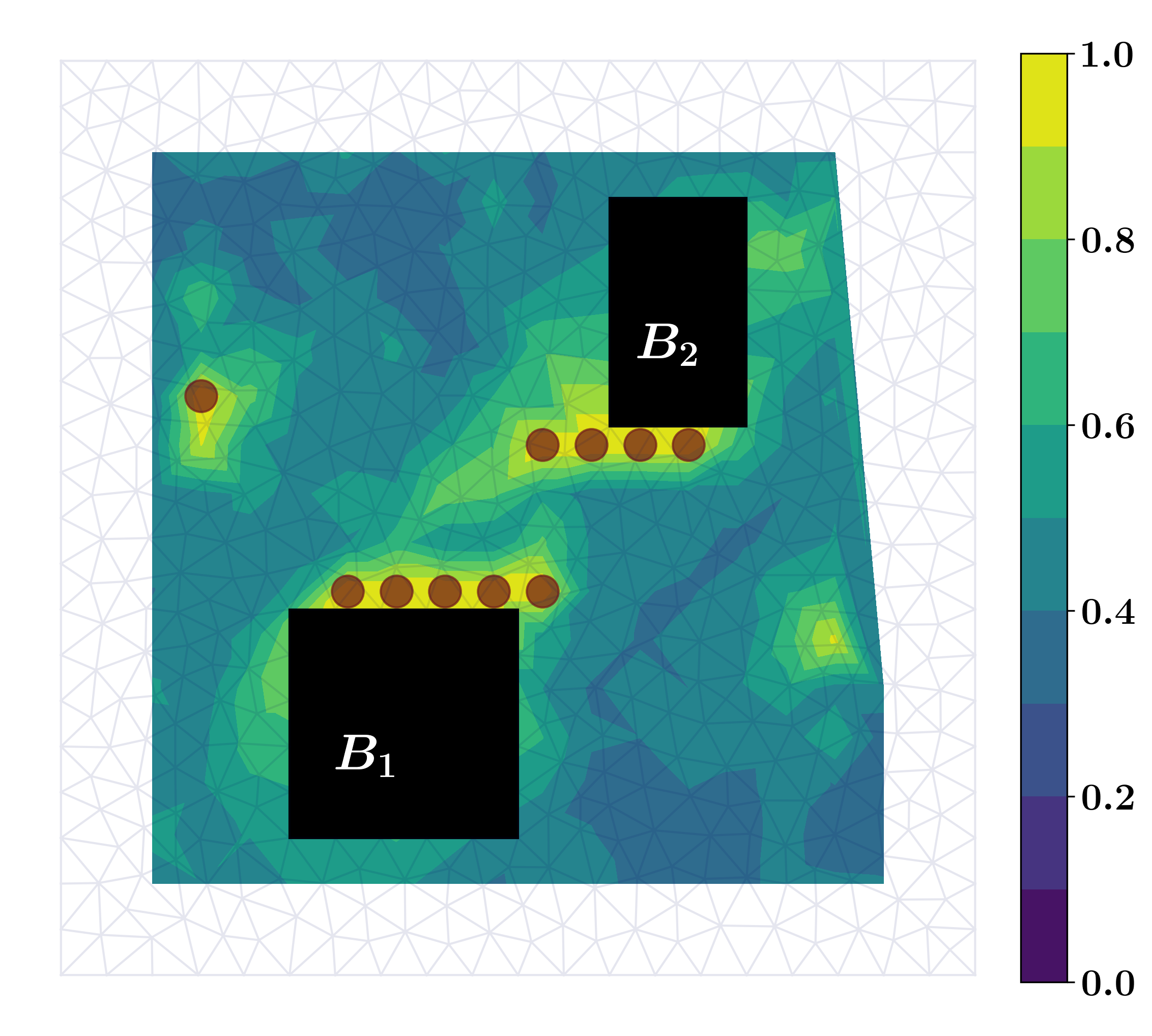}
          \caption{Same as \Cref{fig:OED_AD_SIZE_100_BUDGETS_10} with $\Nsens=200$.}
          \label{fig:OED_AD_SIZE_200_BUDGETS_10} 
        \end{figure}
        \begin{figure}[htbp!]
          \centering
          \includegraphics[width=0.50\textwidth]{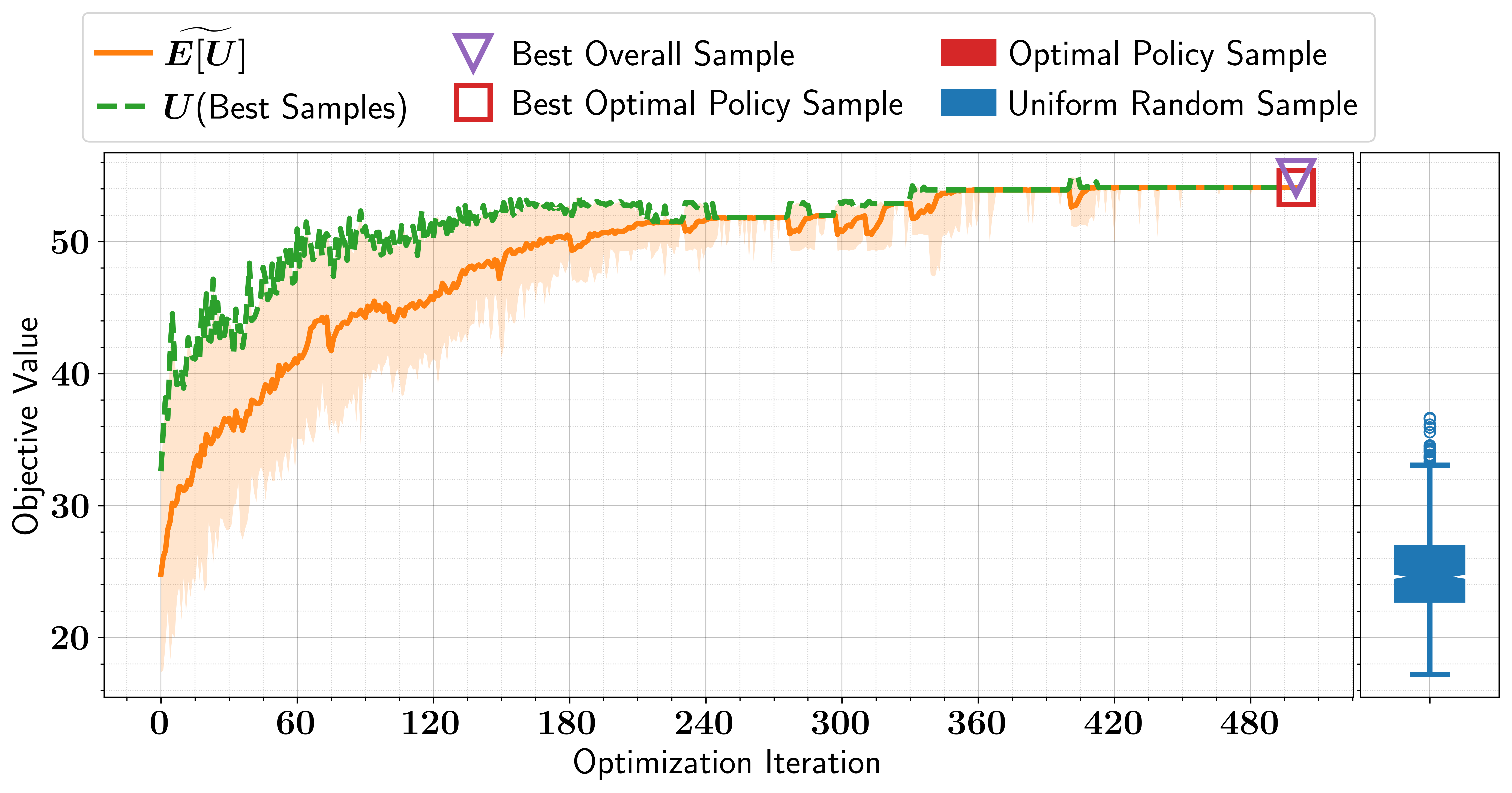}
          \quad
          \includegraphics[width=0.30\textwidth]{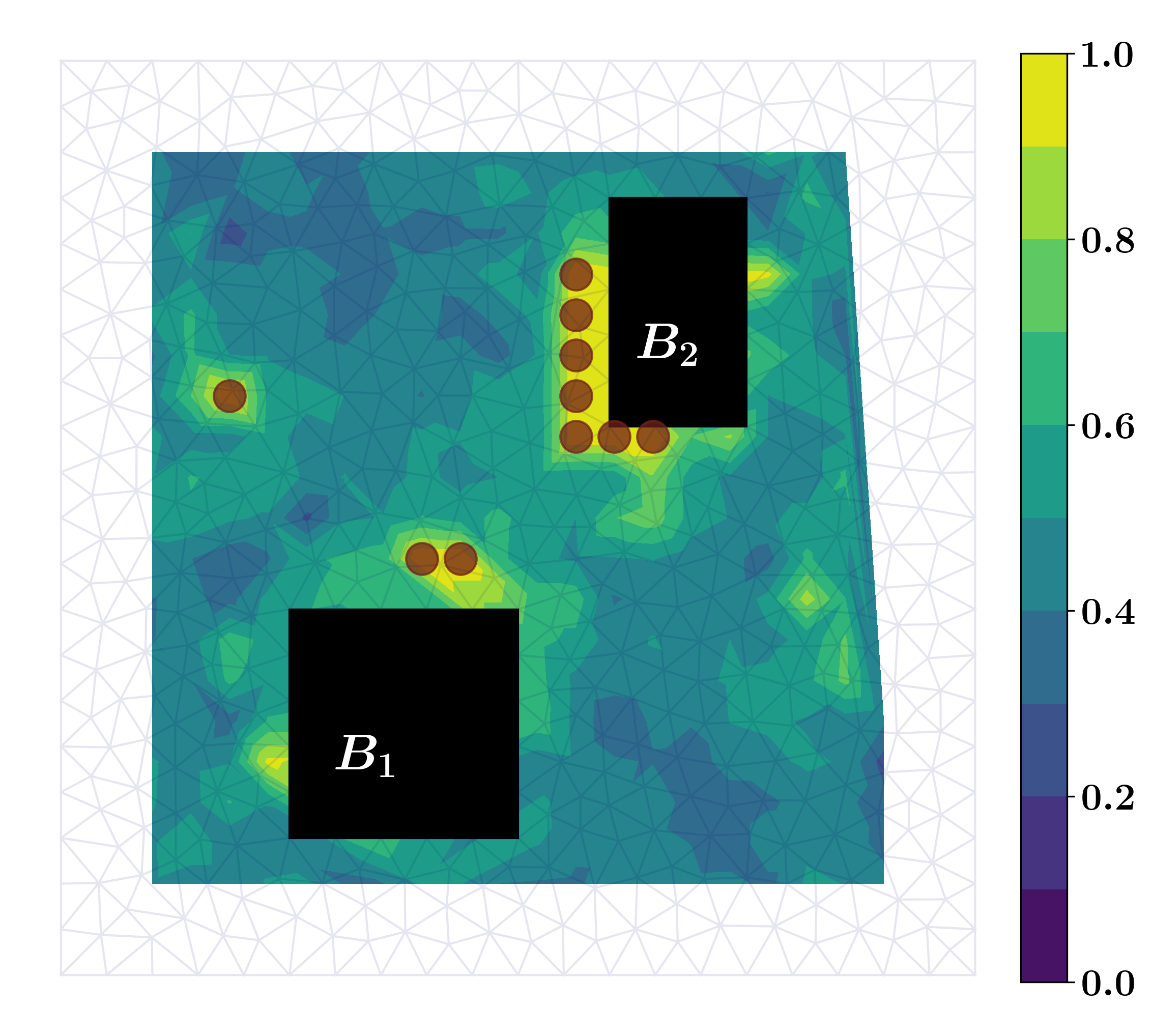}
          \caption{Same as \Cref{fig:OED_AD_SIZE_100_BUDGETS_10} with $\Nsens=300$.}
          \label{fig:OED_AD_SIZE_300_BUDGETS_10} 
        \end{figure}

        \Cref{fig:OED_AD_SIZE_300_BUDGETS_10} shows that keeping track the best design explored at 
        each iteration can be beneficial as a fall back when the algorithm converges to a local optimum.
        While the optimal designs found by
        \Cref{alg:probabilistic_binary_optimization} are not guaranteed to coincide with 
        the global optima, the best policy samples consistently outperform uniform 
        random sampling by a large margin.
        Moreover, in spite of setting the maximum number of iterations to $500$,
        the optimization algorithm converges quickly to a local optimum for 
        small as well as large dimensionality. 

      \subsubsection{Computational cost.}
      \label{subsubsec:computational_cost}
        The computational cost of \Cref{alg:probabilistic_binary_optimization}
        is predetermined by the size of the sample used in estimating the 
        gradient (the stochastic gradient)
        at each iteration and by the number of iterations 
        of the optimization procedure.
        Here we focus on the computational cost per iteration.

        The asymptotic per-iteration cost is derived in
        \Cref{subsec:computational_considerations}; here we report an
        empirical confirmation on the present advection--diffusion
        setup.

        \begin{figure}[htbp!]
          \centering
          \includegraphics[width=0.95\textwidth]{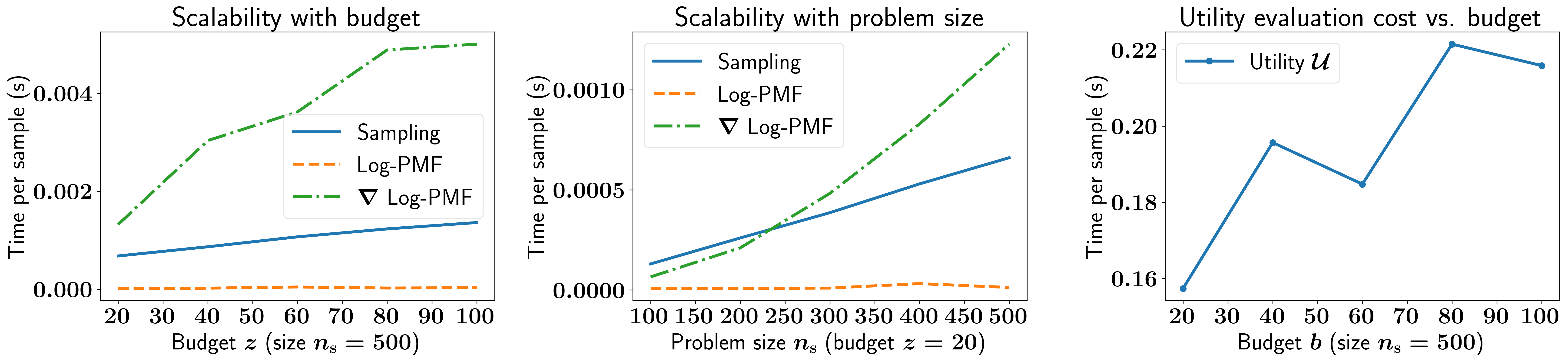}
          \caption{
            Scalability of \Cref{alg:probabilistic_binary_optimization}.
            Left: wall-clock time for the three algorithmic
              components ---CB model sampling, 
              log-PMF evaluation, and score $\nabla_{\hyperparam}\log\CondProb{\design[k]}{\designsum}$
              computation---as a function of $\designsumval$ 
              where the cardinality is set to $\Nsens=500$.
            Middle: wall-clock times for increasing cardinality $\Nsens$ while
              fixing the budget size to $\designsumval=20$.
            Right: wall-clock times evaluation of the objective $\obj$ 
              as a function of $\designsumval$ 
              where the cardinality is set to $\Nsens=500$.
          }
          \label{fig:budget_scalability}
        \end{figure}

        \Cref{fig:budget_scalability} reports the wall-clock time for the three algorithmic
        components — CB model sampling, log-PMF evaluation, and score
        $\nabla_{\hyperparam}\log\CondProb{\design[k]}{\designsum}$ computation — as a function of $\designsumval$
        where the cardinality is set to $\Nsens=500$.
        The middle panel shows the time for increasing dimensionality $\Nsens$ while 
        keeping the budget fixed to $\designsumval=20$.
        In both cases, sampling and log-PMF evaluation are essentially flat while 
        the score computation grows only very mildly remaining negligible compared
        to the cost of evaluating the objective $\obj$ which is shown on the right panel.
        The right panel shows the wall-clock time for a single evaluation of
        $\obj$ which grows modestly with $\designsumval$ because larger budgets
        activate more sensors and increase the cost of the forward solve.
        Crucially, this growth reflects the cost of evaluating the OED criterion
        itself — not any overhead introduced by the budget constraint in the
        proposed approach.
        Note that we do not show the cost of evaluating the objective $\obj$
        for increasing cardinality $\Nsens$ (middle panel), because
        the cost of the forward/adjoint solve depends on the number of active sensors
        (i.e., the budget $\designsumval$), not on the total number of candidates $\Nsens$.

        The results shown here and the fact that the stochastic gradient is data parallel
        show that the proposed approach is computationally efficient and scalable 
        and thus is
        suitable for challenging applications such as sensor placement in large-scale
        data assimilation and inverse problems.

    \subsection{On the necessity of variance reduction}
    \label{sec:numerical_baseline_variance}
      The theoretical analysis in \Cref{subsec:variance_reduction} establishes that
      variance reduction via the optimal baseline
      \eqref{eqn:optimal_scalar_baseline_theory} and its per-component
      generalization \eqref{eqn:optimal_percomponent_baseline_theory}
      can significantly reduce the variance of the stochastic gradient
      estimator without any additional evaluations of the objective $\obj$.
      Here we verify this empirically and examine
      how the variance depends on the sample size $\Nens$
      and the state of $\hyperparam$ during optimization.

      \Cref{fig:baseline_variance} illustrates the effect of the baseline on the
      empirical total gradient variance
      $\Trace{\widehat{\mathrm{Cov}}[\widehat{\vec{g}}]}$
      as a function of sample size $\Nens$ (with $N_b=1$ mini-batch),
      for two representative initializations of $\hyperparam$:
      a \emph{uniform} initialization ($p_i = 0.5$ for all $i$,
      corresponding to an uninformative starting point) and a \emph{U-shaped}
      distribution (parameters concentrated near $0$ or $1$,
      representative of a near-converged solution).
      Similar results were obtained for larger values of $N_b$ and are omitted for brevity.

      \begin{figure}[htbp!]
        \centering
        \includegraphics[trim={0 0 16cm 0}, clip, width=0.48\textwidth]{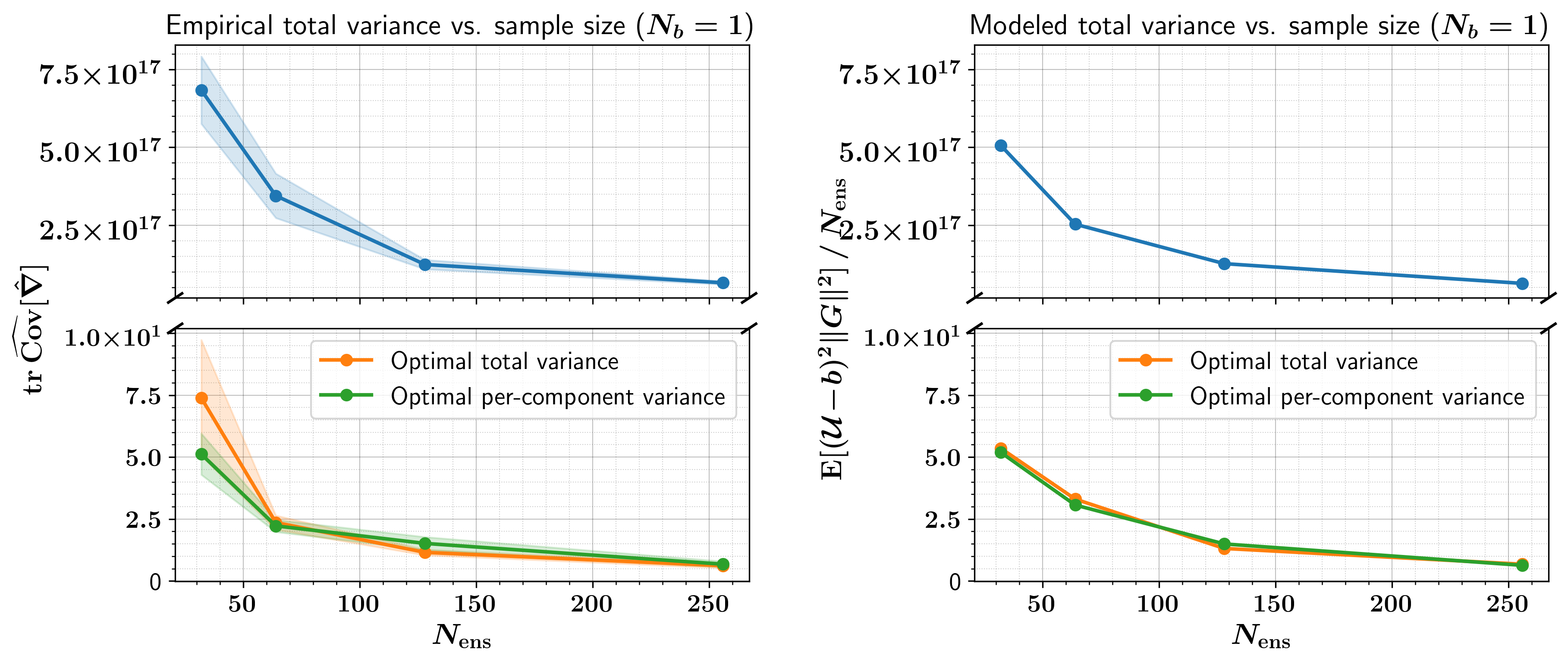}
        \hfill
        \includegraphics[trim={0 0 16cm 0}, clip, width=0.48\textwidth]{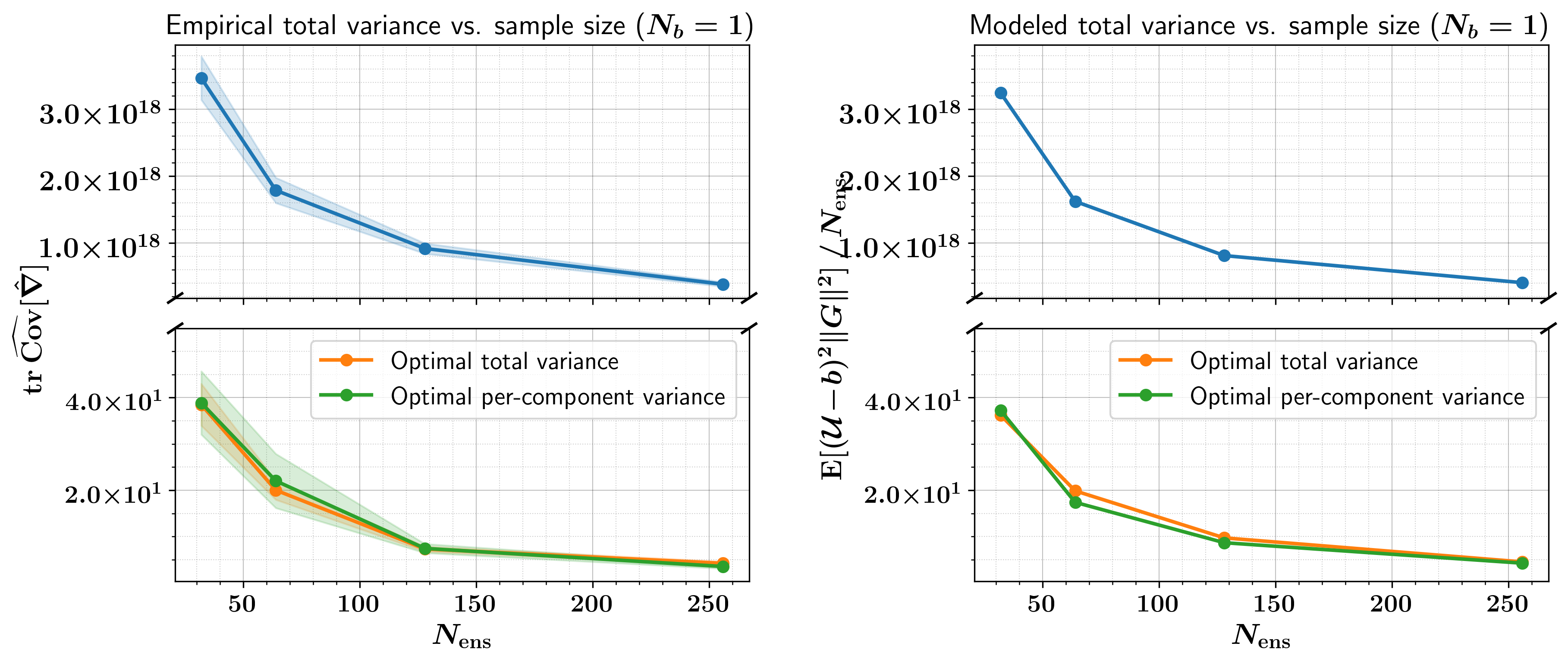}
        \caption{
          Empirical total gradient variance
          $\Trace{\widehat{\mathrm{Cov}}[\widehat{\vec{g}}]}$
          as a function of sample size $\Nens$ (with $N_b = 1$ mini-batch).
          Each panel is a $2\times 1$ grid with a broken $y$-axis:
          the top sub-panel shows the variance without any baseline;
          the bottom sub-panel compares the optimal total-variance baseline
          $\baseline\opt$~\eqref{eqn:optimal_scalar_baseline_theory}
          and the optimal per-component baseline
          $\baseline\opt_j$~\eqref{eqn:optimal_percomponent_baseline_theory}.
          \emph{Left}: uniform initialization ($p_i = 0.5$).
          \emph{Right}: U-shaped initialization (parameters near $0$ or $1$),
          representative of a near-converged solution.
          The variance reduction is more pronounced in the U-shaped case because
          the score components $\nabla_{p_j}\log\CondProb{\design[k]}{\designsum}$
          vary widely across coordinates near convergence.
        }
        \label{fig:baseline_variance}
      \end{figure}

      These results confirm that by using the same sample already drawn for the
      stochastic gradient to estimate the optimal baseline — at no additional
      computational cost — the gradient variance is reduced dramatically,
      with the per-component baseline consistently outperforming the scalar baseline
      especially for small sample sizes $\Nens$.
      
    \subsection{Bayesian A-optimal design}
    \label{sec:numerical_A_minimization}
      %
      The previous experiments all adopt the classical A-optimal design defined by 
      the maximization problem 
      \eqref{eqn:OED_AD_optimization_equality_max}.  
      Here we consider the complementary formulation of A-optimal 
      designs typically employed in Bayesian OED \cite{AlexanderianPetraStadlerEtAl16}. 
      Specifically, the Bayesian A-optimal design is defined as the \emph{minimizer} 
      of the posterior covariance matrix trace. 
      In this setup all algorithmic parameters are identical to those in 
      \Cref{subsec:AD_Setup}; only the optimization direction changes (minimization instead of maximization),
      which requires no modification to \Cref{alg:probabilistic_binary_optimization}.
      The optimization objective, here the inverse of the posterior covariance matrix
      remains a black box to the probabilistic optimization algorithm.

      This setup will be used directly in the comparison with design-space relaxation
      (\Cref{sec:numerical_comparison_relaxation}), where we observed that
      minimization provides a more discriminating test of the two approaches
      at increasing problem dimensions.
    
      The optimal sensor placement problem is thus defined as: 
      \begin{equation}\label{eqn:OED_AD_optimization_equality_min}
        \argmin_{\design \in \{0, 1\}^{\Nsens} }{
          \obj(\design) :=  \Trace{\!
           \F\adj \Pseudoinv{\Diag{\design}\Cobsnoise\Diag{\design}} \F
           +\Cparampriormat^{-1}
          \!}^{-1}
        }
        \quad \textrm{s.t.} \quad   
            \wnorm{\design}{0} = 10 
          \,.
      \end{equation}
      \begin{figure}[htbp!]
        \centering
        \includegraphics[width=0.45\textwidth]{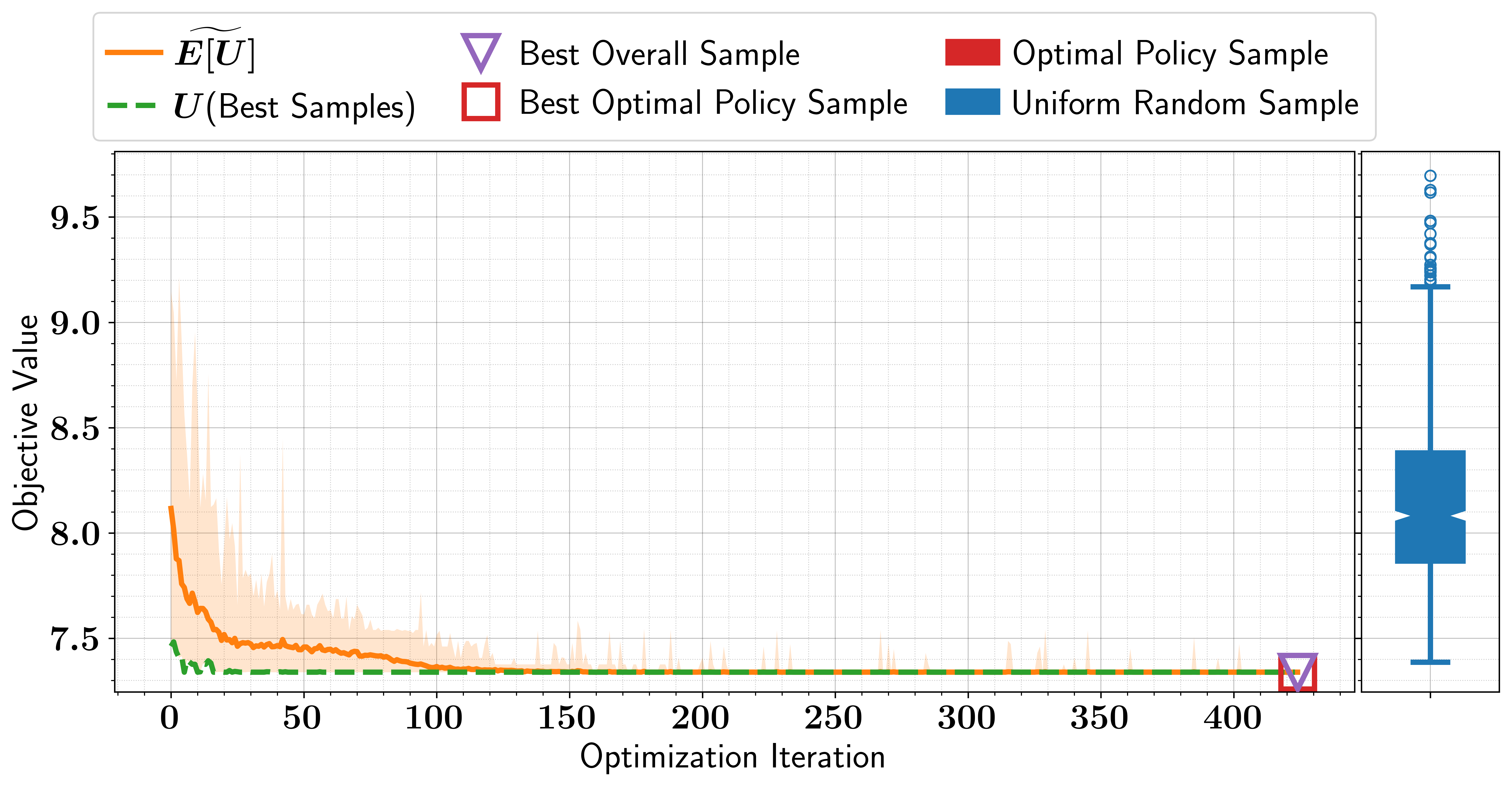}
        \hfill
        \includegraphics[width=0.45\textwidth]{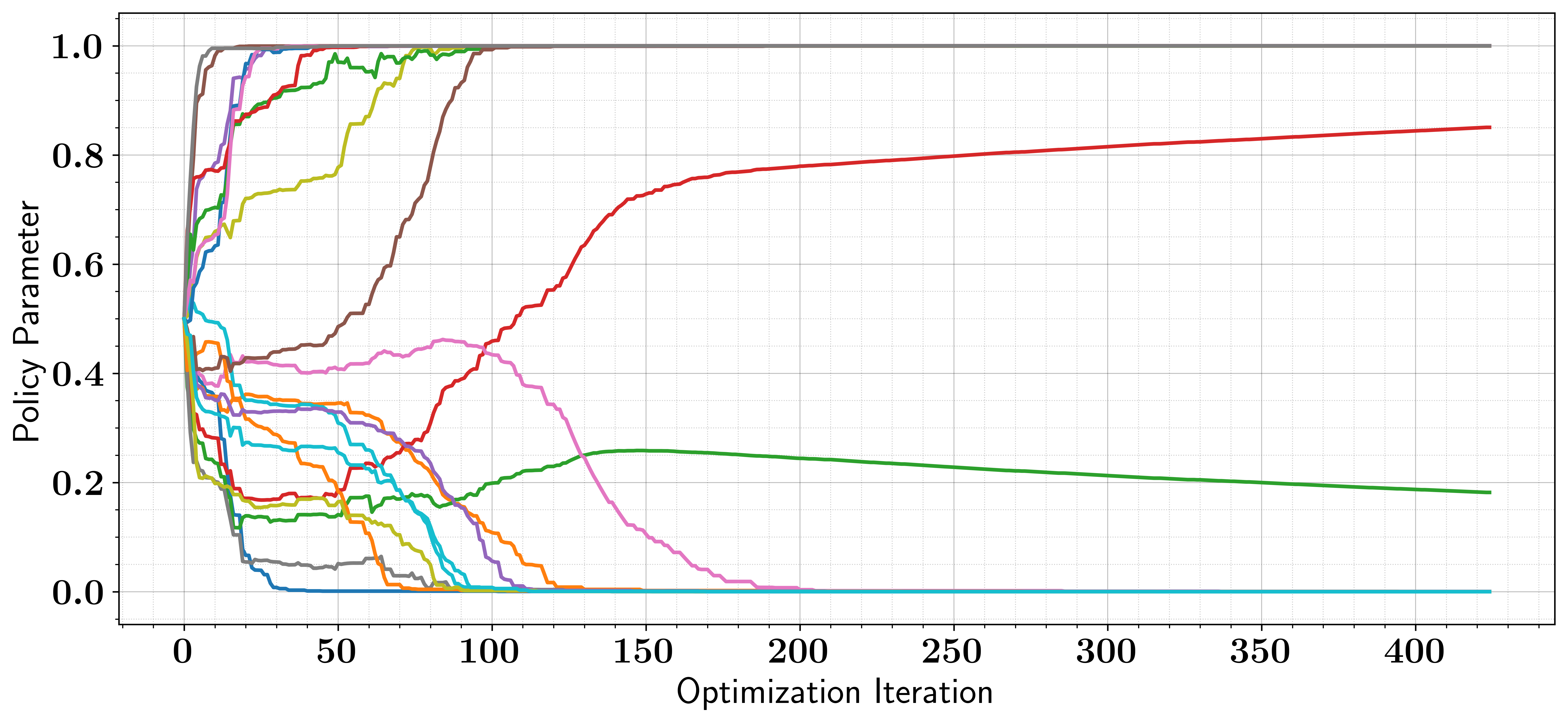}
        \caption{
          A-optimality minimization with $\Nsens=20$ candidate sensors and
          budget $\designsumval=10$.
          Left: optimization progress over iterations; the box plot shows a uniform
          random sample of $1000$ feasible designs for reference.
          Right: evolution of the CB model parameter $\hyperparam$ over iterations.
        }
        \label{fig:OED_AD_A_Min_iter}
      \end{figure}
      \begin{figure}[htbp!]
        \centering
        \includegraphics[width=0.48\textwidth]{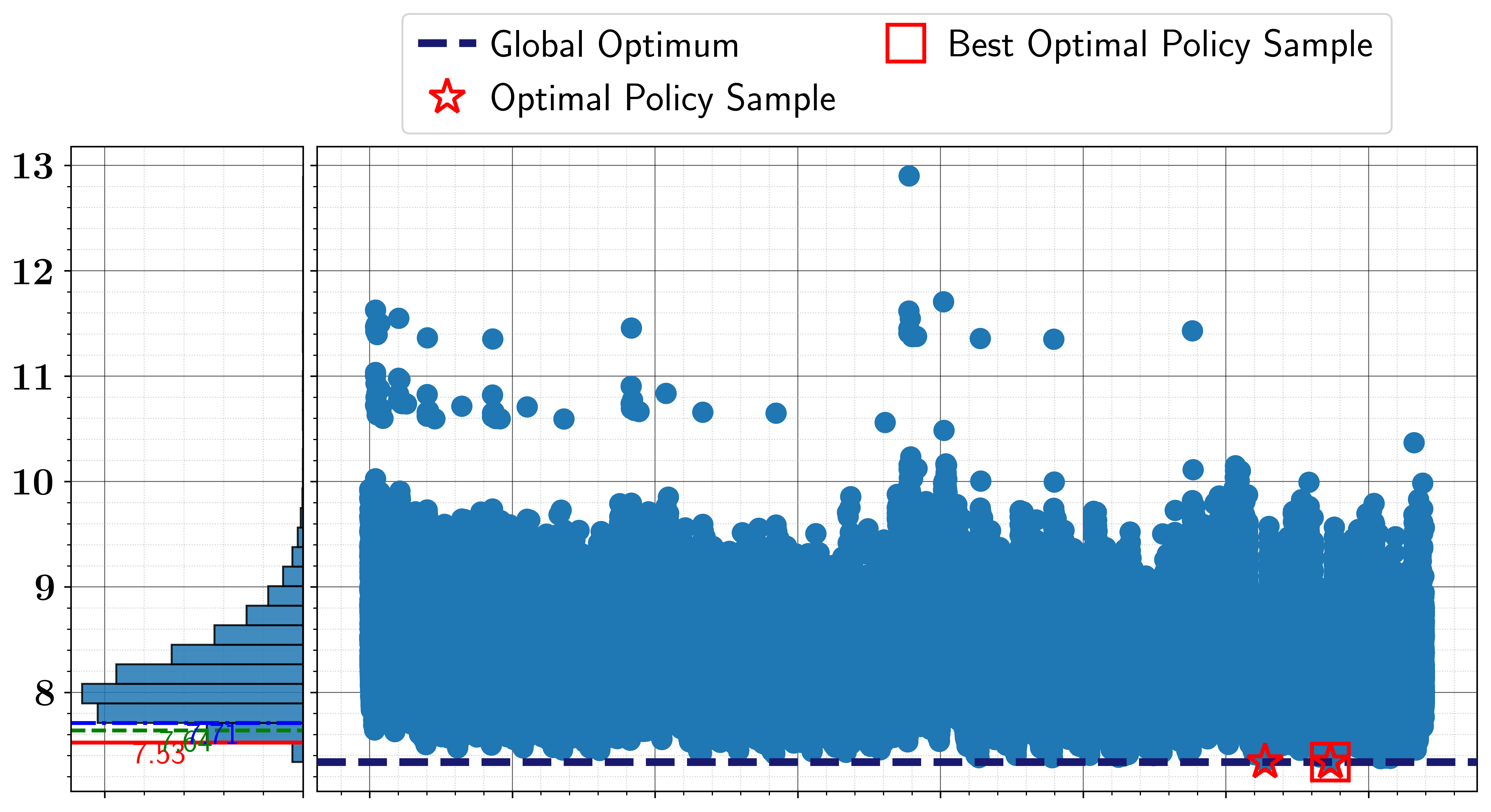}
        \hfill
        \includegraphics[width=0.25\textwidth]{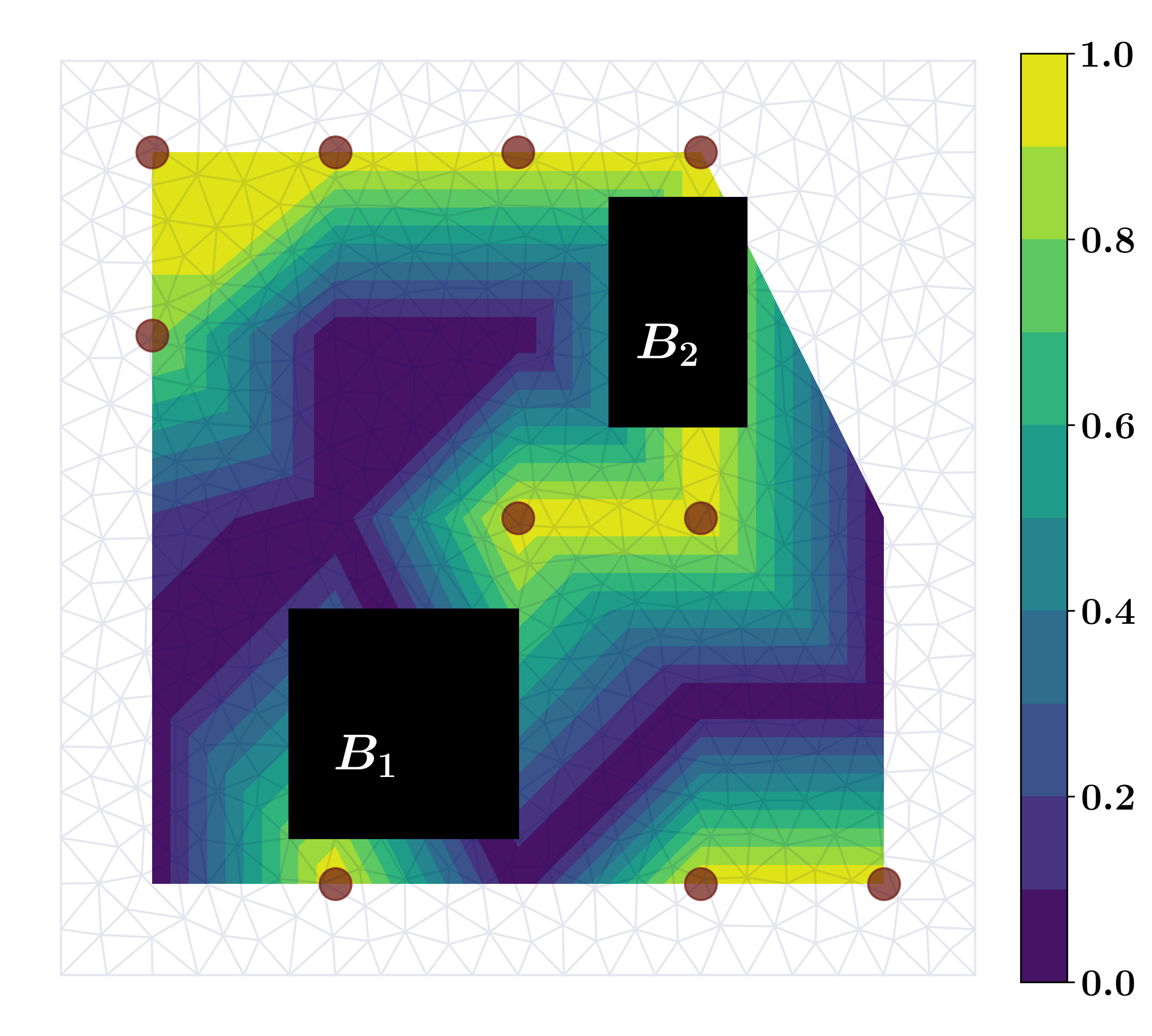}
        \hfill
        \includegraphics[width=0.25\textwidth]{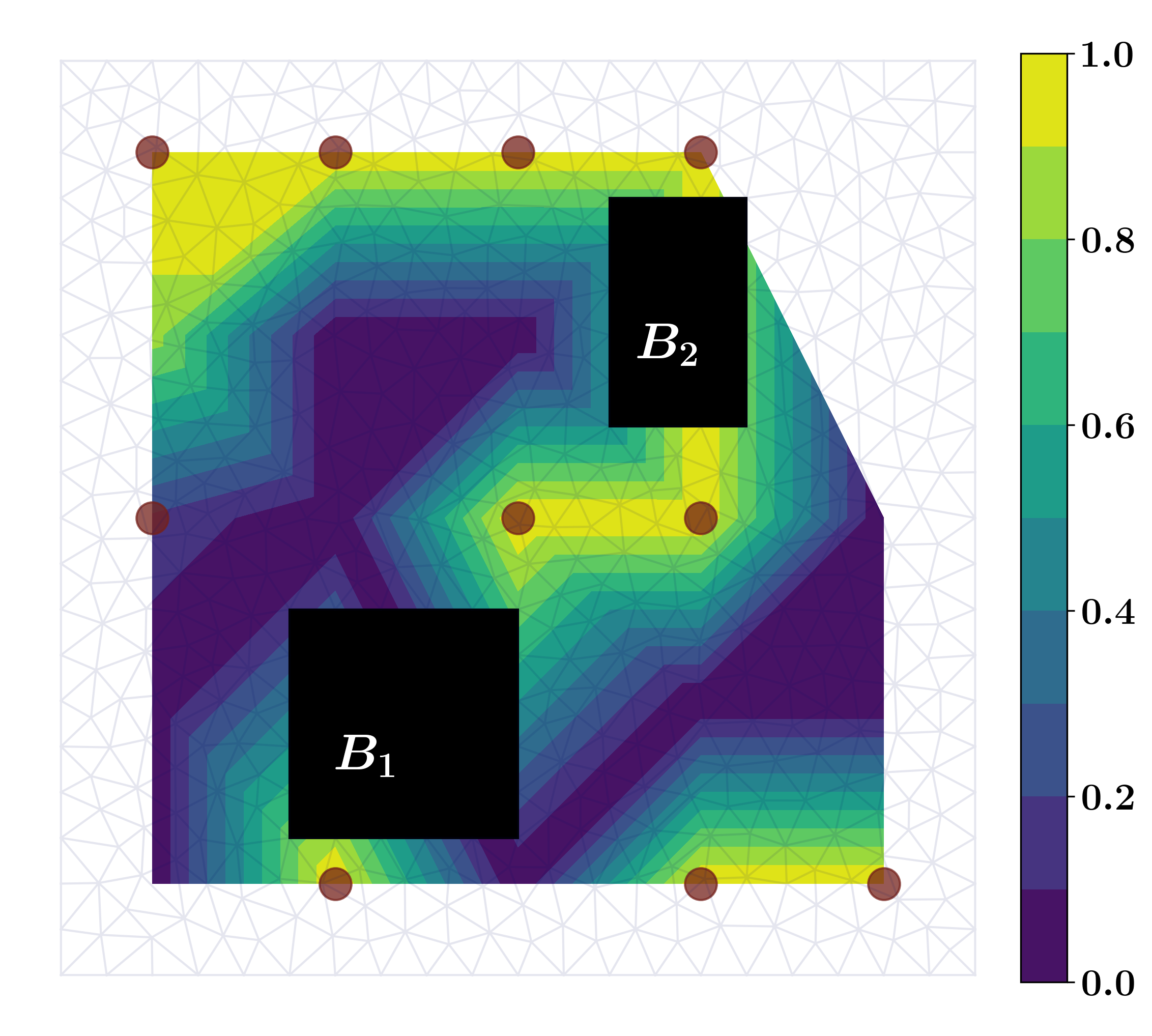}
        \caption{
          A-optimality minimization with $\Nsens=20$ and $\designsumval=10$,
          compared against brute-force enumeration.
          Left: objective value of all feasible designs (blue bars); a sample
          drawn from the optimal policy is shown as red stars, and the best
          policy sample (lowest $\obj$) is marked by a red square.
          The dashed line indicates the global minimum value.
          The following two plots (middle and right) are the two unique samples 
          generated from the optimal policy with objective values
          almost identical to each other and to the global optimum.
          These two plots are overlaid on the interpolated optimal policy
          parameter $\hyperparam\opt$.
        }
        \label{fig:OED_AD_A_Min_designs}
      \end{figure}

      \Cref{fig:OED_AD_A_Min_iter} shows the optimization progress for
      $\Nsens=20$, $\designsumval=10$, and \Cref{fig:OED_AD_A_Min_designs}
      compares the result against the brute-force enumeration of all feasible designs.
      The algorithm converges to a design that attains in this case the global minimum, 
      confirming that the probabilistic framework handles minimization without modification.
      Note that the optimal design here is different from that defined by 
      the maximization problem which is expected due to both insertion of the prior 
      covariance and inversion of the covariance matrix.
     
      We finally note that the formulation of the OED optimality criterion
      is a user choice, and the proposed probabilistic
      optimization approach enables exploring these choices out of the box.
      The two experiments in this section — maximizing the Fisher information trace
      (\Cref{subsec:AD_Results}) and minimizing the posterior covariance trace
      (\Cref{sec:numerical_A_minimization}) — use structurally different objectives
      (one involves only the forward operator; the other incorporates the prior
      covariance and a matrix inversion) and optimize in opposite directions,
      yet \Cref{alg:probabilistic_binary_optimization} requires no modification
      in either case.
      This confirms that the approach is genuinely criterion-agnostic: it treats
      $\obj$ as a black box and the CB model enforces the budget constraint
      independently of the criterion's functional form.

    \subsection{Comparison with design-space relaxation and sum-up rounding}
    \label{sec:numerical_comparison_relaxation}
      A natural alternative to the probabilistic approach is to
      \emph{relax} the binary constraint $\design \in \{0,1\}^{\Nsens}$
      to the continuous box $[0,1]^{\Nsens}$, optimize the
      relaxed objective, and then recover a feasible binary design
      by \emph{sum-up rounding} (SUR) \cite{yu2021multidimensional}.
      This \emph{design-space relaxation} approach requires no stochastic
      gradient estimation and typically converges in fewer iterations;
      however, it requires the gradient of $\obj$ with respect to $\design$
      — information that is unavailable in a black-box setting — and the
      rounding step may degrade the objective value, especially under
      tight budget constraints.
      The proposed probabilistic approach, by contrast, operates entirely
      in the original binary feasible region, requires only black-box
      evaluations of $\obj$, and imposes the budget constraint exactly
      through the CB/GCB model.
      Here we compare the performance of the proposed approach against design relaxation 
      with SUR for binary design retrieval.
      
      \paragraph{Relaxation + SUR setup.}
      The relaxed objective is minimized over $[0,1]^{\Nsens}$ using the
      L-BFGS-B algorithm \cite{byrd1995limited} (via SciPy \cite{2020SciPy-NMeth})
      with a maximum of $500$ iterations, convergence tolerances
      $f_{\rm tol} = 10^{-15}$ and $g_{\rm tol} = 10^{-8}$, and
      box constraints $\design \in [0,1]^{\Nsens}$.
      The budget constraint $\|\design\|_0 = \designsumval$ is incorporated
      as an $\ell_2$ penalty term $\lambda\,\|\|\design\|_0 - \designsumval\|_2^2$
      appended to the objective.
      The penalty weight $\lambda$ is set adaptively for each problem instance
      by matching the gradient norm of the penalty term to that of the
      unpenalized criterion at the initial (uniform) design point,
      ensuring that neither term overwhelms the other regardless of the
      criterion scale or problem dimension.
      The gradient of $\obj$ with respect to $\design$ is computed analytically
      via the adjoint of the forward model, making the gradient evaluation
      cost comparable to a single forward solve \cite{attia2022optimal}.
      Once the continuous solution $\design^* \in [0,1]^{\Nsens}$ is
      obtained, it is rounded to a binary design $\design^* \in \{0,1\}^{\Nsens}$
      with $\|\design^*\|_0 = \designsumval$ using SUR \cite{yu2021multidimensional}.
      When rounding produces ties among candidate sensors, the final design
      is selected by evaluating the objective at each tied candidate and
      choosing the one achieving the best value.

      \Cref{fig:relaxation_vs_probabilistic} compares the two approaches on the
      A-optimality \emph{minimization} problem (\Cref{sec:numerical_A_minimization})
      for $\Nsens \in \{25, 50, 100, 200, 300, 400, 500, 599, 700\}$ candidate sensors
      with a fixed budget $\designsumval = 10$.
      Each configuration is repeated over $25$ independent runs with different
      random seeds. 

      \begin{figure}[htbp!]
        \centering
        \includegraphics[width=0.95\textwidth]{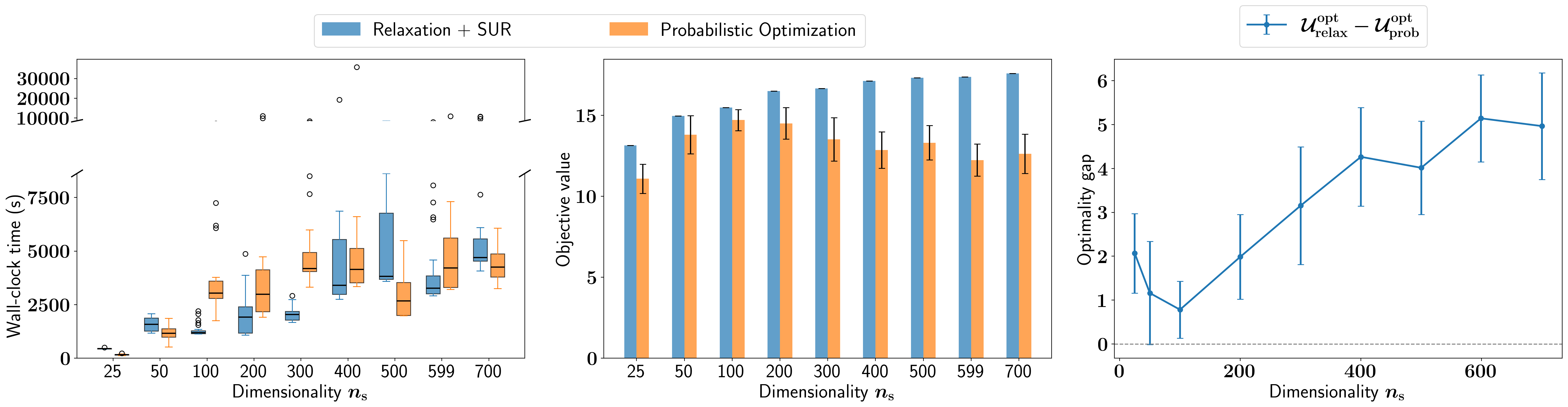}
        \caption{
          Comparison of the probabilistic optimization approach
          (\Cref{alg:probabilistic_binary_optimization}) and the
          design-space relaxation with sum-up rounding~\cite{yu2021multidimensional}
          using the same setup in \Cref{sec:numerical_A_minimization} 
          for increasing design-space
          dimension $\Nsens$ and fixed budget $\designsumval = 10$.
          Results are accumulated over $25$ independent runs per configuration.
          Left: wall-clock time per method.
          Middle: mean $\pm$ one standard deviation of the final objective value
          (lower is better).
          Right: optimality gap $\mathcal{U}^{\rm opt}_{\rm relax} - \mathcal{U}^{\rm opt}_{\rm prob}$;
          positive values indicate that the probabilistic approach achieves a
          lower (better) objective value than relaxation + SUR.
        }
        \label{fig:relaxation_vs_probabilistic}
      \end{figure}

      The left panel of \Cref{fig:relaxation_vs_probabilistic} shows that the two
      methods have comparable wall-clock times across all tested dimensions with the 
      probabilistic approach often experiencing lower times at higher dimensions. 
      The middle panel (lower is better for minimization) shows that the probabilistic
      approach achieves lower mean objective values relative to
      relaxation + SUR, with the advantage becoming more pronounced at higher
      dimensions. 
      This is in contrast to what one might expect given that the relaxation method
      has access to the analytic gradient via the adjoint: the SUR rounding step
      degrades the continuous solution, and this degradation worsens as the
      problem dimension grows and the rounding becomes less accurate.
      The right panel confirms this trend: the gap
      $\mathcal{U}^{\rm opt}_{\rm relax} - \mathcal{U}^{\rm opt}_{\rm prob}$
      is positive and grows with $\Nsens$, indicating that the probabilistic approach
      increasingly outperforms relaxation + SUR at larger problem sizes.
      %

  \section{Discussion and Concluding Remarks}
    \label{sec:conclusions}
    In this work we presented a fully probabilistic approach for binary optimization 
    with a black-box objective function and with budget constraints.
    The approach views the optimization variable as a random variable and employs
    a parametric conditional distribution with support equal to the
    feasible region defined by the constraints.
    The objective function is cast as a stochastic objective over the parameters
    of the conditional distribution, which is then optimized via stochastic gradient ascent.
    The resulting optimal parameter (policy) 
    enables
    sampling the region near the global optimum solution(s).
    This is similar to 
    policy optimization in reinforcement learning where the 
    horizon is finite with only one time step.
    
    The proposed approach does not employ a penalty term to enforce the budget constraint.
    In contrast, the original probabilistic framework \cite{attia2022stochastic} enforces
    budget constraints via soft penalty terms appended to the objective, which requires
    expensive tuning and is best suited to unconstrained binary optimization.
    By using a conditional distribution with support restricted to 
    the feasible region, 
    the proposed approach only explores 
    feasible binary points
    leading to massive computational cost reduction.
    Note that the cost of the proposed approach, in terms of number of function evaluations, 
    is predetermined by the settings of the 
    optimization procedure, namely, by the choice of the size of the sample used to 
    numerically evaluate the stochastic gradient, and by the number of optimization iterations.
    Thus, the computational cost of the proposed approach is dominated by 
    the cost of evaluating the objective 
    at sampled feasible realizations of the binary variable. 
    The proposed approach, however, is data parallel and is thus ideal for large-scale optimization
    problems rather than small problems where a near-optimal solution can be found, for example,
    by greedy methods, brute force, or even random search.
    Moreover, efficient inexpensive approximations of the objective function such as randomization 
    methods and machine learning surrogates of expensive simulations can be employed to speed up the 
    optimization process when applicable. 

    Here
    we focused primarily on budget 
    constraints that are popular in applications 
    such as optimal sensor placement. 
    Modeling hard constraints using conditional probability distributions in general is a nontrivial
    task. General approaches such as combining Fourier transform with the characteristic function of
    basic distributions can help in developing conditional distributions of more complex constraints.

    \noindent\textbf{Optimizing the expectation versus other objectives.}
    A key design choice in the proposed framework, inherited from \cite{attia2022stochastic},
    is to optimize the \emph{expected} utility
    $\stochobj(\hyperparam) = \Expect{\design \sim \CondProb{\design}{\hyperparam}}{\obj(\design)}$
    over the Bernoulli parameter $\hyperparam$.
    This is a natural choice: maximizing the mean encourages the distribution to concentrate
    probability mass on high-utility designs.
    However, it is not the only option.
    One could instead target a risk-aware objective, such as the conditional value-at-risk
    (CVaR) or a variance-penalized utility $\stochobj(\hyperparam) - \lambda\,\mathrm{Var}[\obj(\design)]$,
    which would reward consistently producing good designs rather than occasionally exceptional ones.
    Conversely, a variance-\emph{increasing} term could promote exploration early in optimization
    to escape local optima before gradually shifting to exploitation.
    Since the present work focuses on the hard-constraint extension and foundational guarantees,
    we retain the expectation objective.
    We note that the gradient estimator framework developed here is agnostic to the
    specific functional of the distribution being optimized, so such extensions
    can be accommodated within the same algorithmic structure.

    \noindent\textbf{Non-binary convergence of the Bernoulli parameters.}
    In the numerical experiments carried out in this work, the Bernoulli 
    parameter $\hyperparam$ converges in some cases to values that are not binary 
    (i.e., not in $\{0,1\}^{\Nsens}$).
    This is not a deficiency of the method but rather a natural and informative outcome.
    Specifically, when the feasible set includes multiple global (or near-global) optima, 
    the policy optimization algorithm yields policy parameters that try to capture 
    more than one optimal solution.

    \noindent\textbf{Step-size selection.}
    The only tunable parameter in this work is the optimization step size (learning rate).
    This is generally an open problem for stochastic optimization, with standard guidelines
    including diminishing schedules of the form $\eta^{(n)} = \eta_0 / (1 + \alpha n)$
    (which satisfy the Robbins--Monro conditions \cite{BertsekasTsitsiklis96} and
    guarantee convergence), or adaptive methods such as Adam \cite{kingma2014adam}
    that adjust the step size per component based on gradient history.
    In this work, the domain of the optimization variable $\hyperparam$ is the
    unit hypercube $[0,1]^{\Nsens}$.
    The scaling projection operator \eqref{eqn:scaling_projector} absorbs the effect
    of the step size by ensuring feasibility for any $\eta \in (0,1]$,
    which enabled us to use a fixed learning rate in all experiments
    (\Cref{sec:numerical_experiments}) without loss of convergence.
    A fixed step size is convenient but not always optimal; for problems with
    ill-conditioned gradient landscapes, a diminishing or adaptive schedule
    would be preferable.

    \noindent\textbf{Relation to other penalty-free binary optimization methods.}
    A characteristic feature of the proposed approach is that it is
    \emph{penalty-free}: the budget constraint is encoded directly in the
    constrained probability model rather than appended to the objective as a
    soft penalty term.
    Several other methods share this property, but differ in their assumptions
    and mechanisms.
    \emph{Sum-up rounding} \cite{yu2021multidimensional} produces integer-feasible
    solutions by rounding a continuous relaxation in a way that preserves the
    integral of the control; it therefore operates as a post-processing step
    applied to a relaxed solution and requires gradient access to the objective
    $\obj$ with respect to the design $\design$ in order to solve the relaxed
    problem.
    In contrast, the proposed approach optimizes the binary objective directly
    via stochastic sampling and requires only black-box evaluations of $\obj$.
    The numerical comparison in \Cref{sec:numerical_comparison_relaxation}
    confirms that the proposed probabilistic approach outperforms traditional 
    relaxation, especially in higher dimensional design spaces, 
    while retaining its advantage in the black-box setting.
    \emph{Redundant-dominant designs} \cite{aarset2025global} certify global
    optimality via subgradient-based optimality conditions that exploit dominance
    relations in the objective; this requires problem-specific structure that is
    unavailable in the black-box setting and is therefore complementary to
    the present work rather than directly comparable.
    \emph{Binary neural network training} \cite{meng2020training} also optimizes
    a continuous parameter governing a discrete (binary) distribution, using the
    Bayesian learning rule to justify and improve upon straight-through gradient estimators.
    The mechanics differ: \cite{meng2020training} relaxes the binary weights
    themselves and applies deterministic binarization at inference time, whereas
    the present work maintains a proper probability distribution over binary
    designs and uses unbiased stochastic gradient estimators derived
    from the CB/GCB models.
    The budget constraint of the present work has no direct analogue in the
    binary neural network context.

    \noindent\textbf{Future directions.}
    Several natural extensions of the present framework are left for future work.
    First, while this work focuses on equality and inclusion budget constraints,
    modeling more general hard constraints via conditional probability distributions
    remains an open and practically important problem; general tools such as
    characteristic-function-based constructions offer a promising avenue.
    Second, although a fixed step size proved sufficient in all experiments reported here
    (owing to the scaling projection operator), systematic investigation of
    diminishing or adaptive step-size schedules (e.g., Adam \cite{kingma2014adam} or
    Robbins--Monro-type rules \cite{BertsekasTsitsiklis96}) in this framework
    is warranted for problems with ill-conditioned gradient landscapes.
    Third, while the present work treats the objective as a black-box scalar function,
    replacing it with a stochastic simulator or a machine-learning surrogate
    can further reduce cost in expensive-evaluation settings.
    More broadly, the probabilistic viewpoint developed here---casting a discrete
    feasibility-constrained problem as stochastic optimization over a parametric
    distribution---has since proven to generalize naturally beyond binary designs,
    for example, in robust and  path-based experimental design settings,
    pointing to a broader research program in probabilistic optimal experimental design.

\ack{
    This material is based upon work supported by 
    the U.S. Department of Energy, 
    Office of Science, 
    Office of Advanced Scientific Computing Research (ASCR),
    ASCR Applied Mathematics Base Program,
    and Scientific Discovery through Advanced Computing (SciDAC) Program 
    through the FASTMath Institute under contract 
    number DE-AC02-06CH11357 at Argonne National Laboratory.
}

\data{
  The data that support the findings of this study are openly available through the 
  \pyoed package \cite{attia2024pyoed,attia2023pyoedRepo}. 
  All numerical results are reproducible through 
  the following URL/DOI:
  \url{https://gitlab.com/ahmedattia/pyoed/-/tree/main/pyoed/examples/new_developments/trajectory_oed}
}

\appendix
      
  \section{
    Proofs of Theorems and Lemmas in \Cref{sec:probability_models}
  }\label{app:probability_models_proofs}
    \begin{proof}[Proof of \Cref{lemma:PB_Model_Degenerate}]
      The identity \eqref{eqn:PB_Model_Degenerate_1} evaluates the PMF 
      of the PB model when one of the 
      success probabilities is equal to $0$.
      Because we can reorder the entries of $\design$ and $\hyperparam$, it is sufficient
      to prove \eqref{eqn:PB_Model_Degenerate_1} for $i=1$.
      By applying the law of total probability, 
      \begin{equation}\label{eqn:PB_Model_Degenerate_1_proof_1}
        \begin{aligned}
          \CondProb{\designsum\!=\!\designsumval}{\hyperparam_1\!=\!0}
            &=\CondProb{\designsum\!=\!\designsumval, \design_1\!=\!0}{\hyperparam_1\!=\!0}
              + \CondProb{\designsum\!=\!\designsumval, \design_1\!=\!1}{\hyperparam_1\!=\!0} \\
            &= \CondProb{\designsum\!=\!\designsumval, \design_1\!=\!0}{\hyperparam_1\!=\!0} + 0 
            = \CondProb{(0\!+\!\design_2 \!+\! \ldots
              \!+\! \design_{\Nsens})\!=\!\designsumval}{\hyperparam_1\!=\!0}   \\
             &= \CondProb{(\design_2\!+\!\ldots\!+\!\design_{\Nsens})\!=\!\designsumval}{
               \hyperparam_2, \ldots, \hyperparam_{\Nsens}
             } 
             \stackrel{\eqref{eqn:Poisson_Binomial_PMF}}{=}
             \frac{
                R(\designsumval, S\setminus\{1\}) 
              }{
                \prod_{\substack{j=2}}^{\Nsens} (1+w_j)
              }
             \,,
        \end{aligned}
      \end{equation}
      which is the PMF of a PB model
      resulting from removing $\hyperparam_1$ from \eqref{eqn:Poisson_Binomial_PMF}. 
      Similarly,  
      \begin{equation}\label{eqn:PB_Model_Degenerate_1_proof_2}
        \begin{aligned}
        \CondProb{\designsum\!=\!\designsumval}{\hyperparam_1\!=\!1}
          &= \CondProb{(1\!+\!\design_2\!+\!\ldots\!+\!\design_{\Nsens})\!=\!\designsumval}{\hyperparam_1\!=\!1}  
          \\
          &= \CondProb{(\design_2\!+\!\ldots\!+\!\design_{\Nsens})\!=\!\designsumval-1}{
               \hyperparam_2, \ldots, \hyperparam_{\Nsens}
          } 
          \stackrel{\eqref{eqn:Poisson_Binomial_PMF}}{=}
            \frac{
                R(\designsumval-1, S\setminus\{1\}) 
              }{
                \prod_{\substack{j=2}}^{\Nsens} (1+w_j)
              }
          \,, 
        \end{aligned}
      \end{equation}
      which proves \eqref{eqn:PB_Model_Degenerate_2}.
      To prove \eqref{eqn:PB_Model_Degenerate_3} and \eqref{eqn:PB_Model_Degenerate_4}, 
      we note that 
      \begin{equation}\label{eqn:PB_Model_Degenerate_3_proof}
        \begin{aligned}
          \Prob{(\designsum\!=\!\designsumval)}
            &\stackrel{(\ref{eqn:Poisson_Binomial_PMF}, \ref{eqn:R_function_and_Bernoulli_weights_recurrence_relation_2_formula})}{=} 
               (1\!-\!\hyperparam_i)
               \Bigl(
                 R(\designsumval, S\!\setminus\!\{i\})  
                 + w_i R(\designsumval\!-\!1, S\!\setminus\!\{i\})  
               \Bigr)
              \prod_{\substack{j=1,\,j\neq i}}^{\Nsens}\! (1\!-\!\hyperparam_j) \\ 
            &\stackrel{(\ref{eqn:R_function_and_Bernoulli_weights})}{=}  
               \Bigl(
                 (1\!-\!\hyperparam_i) R(\designsumval, S\!\setminus\!\{i\})  
                 + \hyperparam_i R(\designsumval\!-\!1, S\!\setminus\!\{i\})  
               \Bigr)
              \prod_{\substack{j=1,\,j\neq i}}^{\Nsens}\! (1\!-\!\hyperparam_j) 
              \,. 
        \end{aligned}
      \end{equation}
      Thus,  
      \eqref{eqn:PB_Model_Degenerate_3} and \eqref{eqn:PB_Model_Degenerate_4}
      are obtained by setting $\hyperparam_i$ in the right-hand side
      to $0$ and $1$, respectively.
    \end{proof}
    \begin{proof}[Proof of \Cref{theorem:PB_Model_Full}]
      The proof follows immediately from 
      \eqref{eqn:Poisson_Binomial_PMF} and
      recursive application of \eqref{eqn:PB_Model_Degenerate}
      over all entries of $\design$ corresponding to degenerate probabilities 
      $\hyperparam_i\in\{0, 1\},i=1, \ldots, \Nsens$.
    \end{proof}

    \begin{proof}[Proof of \Cref{lemma:CB_Model_Degenerate}]
      First, we note that \eqref{eqn:CB_Model_PMF} 
      takes the following equivalent form,
      \begin{equation}\label{eqn:CB_Model_Degenerate_conditional}
        \begin{aligned}
        \CondProb{
            \design 
        }{
          \designsum=\designsumval
        } 
          &=
            \CondProb{
              \design_{-i} 
            }{
              \design_i;\, 
              \sum_{j=1}^{\Nsens} \design_{j} =\designsumval
            }
            \CondProb{\design_{i}}{\designsum=\designsumval}
          \\
          &=
            \CondProb{
              \design_{-i} 
            }{
              \sum_{\substack{ j=1 \\ j\neq i} }^{\Nsens} \design_{j}
                =\designsumval\!-\!\design_{i}
            } 
            \CondProb{\design_{i}}{\designsum=\designsumval}
            \,, 
        \end{aligned}
      \end{equation}
      where 
      $
        \design_{-i}\equiv\design_1, \ldots, \design_{i-1}, \design_{i+1}, \ldots, \design_{\Nsens}
      $ is obtained by removing the $i$th entry 
      from $\design$.
      Thus, \eqref{eqn:CB_Model_Degenerate_conditional} is the product of  the PMF of 
      a CB model obtained by 
      discarding the $i$th component of $\hyperparam$ with the first-order 
      inclusion probability 
      $\CondProb{\design_i}{\designsum=\designsumval}=\pi_i$. 
      Assuming $\hyperparam_i\in\{0, 1\}$,  
      the value of the PMF \eqref{eqn:CB_Model_PMF} is equal to $0$ when 
      $\design_i\neq \hyperparam_i$. 
      If we let $\hyperparam_i=0$, then 
      the first term in \eqref{eqn:CB_Model_Degenerate_conditional} is given by 
      \begin{equation}\label{eqn:CB_Model_Degenerate_1_proof_1}
          \CondProb{
            \design_{-i}
          }{
            \sum_{\substack{ j=1 \\  j\neq i} }^{\Nsens} \design_{j}
            =\designsumval\!-\!\design_{i}, \, 
            \hyperparam_i=0 
          } 
          =
          \CondProb{
            \design_{-i}
          }{
            \sum_{\substack{ j=1 \\  j\neq i} }^{\Nsens} \design_{j}=\designsumval,\, 
            \hyperparam_{-i}
          }
          =
            \frac{
              \prod\limits_{\substack{j=1 \\ j\neq i}}^{\Nsens} w_j^{\design_j} 
            }{
              R(\designsumval, S\!\setminus\!\{i\}) 
            } 
          \,,
      \end{equation}
      where $\hyperparam_{-i}$ is obtained by removing $i$th entry from $\hyperparam$. 
      This proves \eqref{eqn:CB_Model_Degenerate_1}.
      Similarly, for $\hyperparam_i=1$, 
      \begin{equation}\label{eqn:CB_Model_Degenerate_1_proof_2}
          \CondProb{
            \design_{-i}
          }{
            \sum\limits_{\substack{ j=1 \\  j\neq i} }^{\Nsens} \design_{j}
              =\designsumval\!-\!\design_{i}, 
              \hyperparam_i=1 
          } 
          =
          \CondProb{
            \design_{-i}
          }{
            \sum\limits_{\substack{ j=1 ,\,  j\neq i} }^{\Nsens} \design_{j}\!=\!\designsumval\!-\!1; 
            \hyperparam_{-i}
          }
          =
            \frac{
              \prod\limits_{\substack{j=1,\, j\neq i}}^{\Nsens} w_j^{\design_j} 
            }{
              R(\designsumval\!-\!1, S\!\setminus\!\{i\}) 
            } 
          \,,
      \end{equation}
      which 
      proves \eqref{eqn:CB_Model_Degenerate_2}.
      To prove \eqref{eqn:CB_Model_Degenerate_3} and \eqref{eqn:CB_Model_Degenerate_4}, 
      we note that 
      \eqref{eqn:CB_Model_PMF} 
      takes the following equivalent form 
      \begin{equation}
        \begin{aligned}
          \CondProb{\design}{\designsum\!=\!\designsumval}
          &
          \stackrel{(\ref{eqn:CB_Model_PMF})}{=} \frac{
              \prod\limits_{i=1}^{\Nsens}{{w_i}^{\design_i}}
            }{
              R(\designsumval, S)
            } 
          \\ 
          &
          \stackrel{(
            \ref{eqn:R_function_and_Bernoulli_weights_recurrence_relation_2_formula},
            \ref{eqn:R_function_and_Bernoulli_weights}
          )}{=} 
            \frac{
              {\hyperparam_{i}}^{\design_i}
                \prod\limits_{\substack{i=j,\, j\neq i}}^{\Nsens}{{w_j}^{\design_j}} 
            }{
              {\left({1-\hyperparam_{i}}\right)}^{\design_i}
              R(\designsumval, S\setminus\{i\}) 
              + \hyperparam_{i} {\left({1-\hyperparam_{i}}\right)}^{\design_i-1}
                 R(\designsumval-1, S\setminus\{i\}) 
            } 
          \,.
        \end{aligned}
      \end{equation}

      By setting the value of 
      $\design_i$ to $0/1$ and taking the limit 
      as $\hyperparam_i\to 0/1$, respectively we prove \eqref{eqn:CB_Model_Degenerate_3} 
      and \eqref{eqn:CB_Model_Degenerate_4} as follows.
      \begin{align}
        \left.
          \lim_{ \hyperparam_i\rlim 0} \CondProb{\design}{\designsum\!=\!\designsumval}
        \right|_{\design_i=0} 
        &=
        \lim_{ \hyperparam_i\rlim 0} 
        \frac{
          {\prod\limits_{\substack{j=1 ,\, j\neq i}}^{\Nsens}{{w_j}^{\design_j}} }
        }{
          R(\designsumval, S\!\setminus\!\{i\}) 
            + \hyperparam_{i} {\left({1\!-\!\hyperparam_{i}}\right)}^{-1}
            R(\designsumval\!-\!1, S\!\setminus\!\{i\}) 
        } 
        = 
        \frac{
          {\prod\limits_{\substack{j=1 ,\, j\neq i}}^{\Nsens}{{w_j}^{\design_j}} }
        }{
            R(\designsumval, S\!\setminus\!\{i\}) 
          } \,, \\
        \left. 
          \lim_{ \hyperparam_i\llim 1} \CondProb{\design}{\designsum\!=\!\designsumval} 
        \right|_{\design_i=0} 
          &=
          \lim_{ \hyperparam_i\llim 1} 
          \frac{
            {\prod\limits_{\substack{j=1 ,\, j\neq i}}^{\Nsens}{{w_j}^{\design_j}} }
          }{
            R(\designsumval, S\setminus\{i\}) 
              + \hyperparam_{i} {\left({1-\hyperparam_{i}}\right)}^{-1}
              R(\designsumval-1, S\setminus\{i\}) 
          }  
          = 0 \,,  \\
        \left.
          \lim_{ \hyperparam_i\rlim 0} \CondProb{\design}{\designsum\!=\!\designsumval} 
        \right|_{\design_i=1} 
          &= 
          \lim_{ \hyperparam_i\rlim 0} 
            \frac{
              \hyperparam_i
              {\prod\limits_{\substack{j=1 ,\, j\neq i}}^{\Nsens}{{w_j}^{\design_j}} }
            }{
              (1\!-\!\hyperparam_i)
              R(\designsumval, S\!\setminus\!\{i\}) 
                \!+\! \hyperparam_{i} R(\designsumval\!-\!1, S\!\setminus\!\{i\}) 
          }
          = 0  \,, \\
        \left.
          \lim_{ \hyperparam_i\llim 1} \CondProb{\design}{\designsum\!=\!\designsumval} 
        \right|_{\design_i=1} 
          &= 
          \lim_{ \hyperparam_i\llim 1} 
            \frac{
              \hyperparam_i
              {\prod\limits_{\substack{j=1 ,\, j\neq i}}^{\Nsens}{{w_j}^{\design_j}} }
            }{
              (1\!-\!\hyperparam_i)
              R(\designsumval, S\!\setminus\!\{i\}) 
                \!+\! \hyperparam_{i} R(\designsumval\!-\!1, S\!\setminus\!\{i\}) 
          }
          = 
            \frac{
              \prod\limits_{\substack{j=1 ,\, j\neq i}}^{\Nsens}{{w_j}^{\design_j}} 
            }{
              R(\designsumval\!-\!1, S\!\setminus\!\{i\}) 
          }
          \,. 
      \end{align}
    \end{proof}
    \begin{proof}[Proof of \Cref{theorem:CB_Model}]
      The proof 
      follows from the definition of the CB model's
      PMF for non-degenerate probabilities
      \eqref{eqn:CB_Model_PMF}
      and 
      by recursive application of
      \Cref{lemma:CB_Model_Degenerate}
      to entries of $\design_i$ with degenerate probabilities 
      $\hyperparam_i\in\{0, 1\}, i=1, \ldots, \Nsens$.
    \end{proof}
    \begin{proof}[Proof of \Cref{proposition:CB_Model_moments}]
      While in \cite{chen2000general} it was noted that 
      $\Expect{}{\design_{i}} = \pi_i$, 
      the proof was not accessible.
      Thus, we provide the proof here in detail for completeness.
      \begin{equation}
        \begin{aligned}
          \Expect{}{\design_i}
            &\stackrel{(\ref{eqn:CB_Model_PMF})}{=} 
            \frac{ 
              \sum_{\substack{\design \in \{0, 1\}^{\Nsens} \\ \wnorm{\design}{0}=\designsumval}}
                \design_i \, 
                  \prod\limits_{j=1}^{\Nsens}{{w_j}^{\design_j}}
            }{ R(\designsumval, S) } 
            = \frac{ 
                0 w_{i}^{0} \, 
                \sum\limits_{\substack{ B\subseteq S\setminus \{i\} \\ |B|=\designsumval} } 
                  \prod\limits_{j \in B} w_j 
                +
                1 w_{i}^{1} \, 
                \sum\limits_{\substack{ B\subseteq S\setminus \{i\} \\ |B|=\designsumval-1 } } 
                  \prod\limits_{j \in B} w_j 
              }{ R(\designsumval, S) } 
            \\
            &= \frac{ 
              w_{i} \, 
              \sum\limits_{\substack{ B\subseteq S\setminus \{i\} \\ |B|=\designsumval-1}} 
                \prod\limits_{j \in B} w_j 
            }{ R(\designsumval, S) } 
            \stackrel{(\ref{eqn:R_function_and_Bernoulli_weights})}{=} 
            \frac{ w_{i} \, R(\designsumval-1, S\setminus \{i\}) }{ R(\designsumval, S) } 
            = \pi_{i} 
            \,.
        \end{aligned}
      \end{equation}
      
      \begin{equation}
        \begin{aligned}
        \Expect{}{\design_i \design_j}
          &\stackrel{(\ref{eqn:CB_Model_PMF})}{=} 
            \frac{
              \sum_{\substack{\design \in \{0, 1\}^{\Nsens} \\ \wnorm{\design}{0}=\designsumval}}
                \design_i \design_j \,  
              \prod\limits_{j=1}^{\Nsens}{{w_j}^{\design_j}}
            }{
              R(\designsumval, S)
            }  
          = \frac{ 
              w_{i} w_{j} 
              \sum_{\substack{ \substack{B\subseteq S\setminus \{i, j\} \\ |B|=\designsumval-2} } } 
                \prod_{k \in B} w_k 
            }{ R(\designsumval, S) } \\
            %
          &\stackrel{
            (\ref{eqn:R_function_and_Bernoulli_weights}, 
             \ref{eqn:first_second_inclusion}
            )}{=}
          \pi_{i, j}
          \,;\, \, i\neq j\,,
        \end{aligned}
      \end{equation}
      where the second step is obtained by dropping all terms with 
      $\design_i=0$ or $\design_j=0$ 
      from the expectation expansion. 
      In the other case, namely $i=j$, since $\design_i\in\{0, 1\}$,  
      $
      \design_i \design_j = \design_i^2 = \design_i,
      $ we have
      $
        \Expect{}{\design_i \design_j} 
        = \Expect{}{\design_i }
        = \pi_i 
      $ and  thus for  $i, j\in\{1, \ldots, \Nsens\}$ we have 
      $
        \Expect{}{\design_i \design_j}
          = \delta_{ij} \pi_{i} + (1-\delta_{ij}) \pi_{ij} 
      $. 
      The covariance is given by 
      $
        \brCov{\design_{i}, \design_{j}}
          = \Expect{}{\design_i \design_j}
            - \Expect{}{\design_i }
              \Expect{}{\design_j}
          = \delta_{ij} (\pi_{i} - \pi_{i}^2)  
            + (1-\delta_{ij}) (\pi_{i, j} - \pi_{i}\pi_{j})   
      $.
    \end{proof}
    \begin{proof}[Proof of \Cref{theorem:CB-Multiple-Sums}]
        By using Bayes' rule, 
        \begin{equation}\label{eqn:Bayes_designsum}
          \CondProb{\design}{\designsum\!\in\! \designsumset}
            = \CondProb{\design}{\designsum\in 
            \{\designsumval_1, \ldots, \designsumval_{m}\}} 
            =  
            \frac{
            \Prob{(\design,\, \designsum\in \designsumset )}
            }{
            \Prob{(\designsum\in \designsumset) }
            } 
            =  
            \frac{
              \CondProb{\designsum\in \designsumset }{\design} 
                \, \Prob{(\design)}
            }{
              \Prob{(\designsum\in \designsumset)}
            } \,, 
        \end{equation}
        and 
        because realizations of the sum $\designsum$ are mutually exclusive, 
        that is, $\designsum$ cannot take different values at the same time, 
        then  $\forall i\neq j;\, i, j\in S$, 
        $
          \CondProb{\designsum=\designsumval_{i} 
            \cap \designsum=\designsumval_{j}}{\hyperparam;\, \design}=0 
        $,
        hence, 
        $
          \CondProb{\designsum=\designsumval_{i} \cup \designsum=\designsumval_{j}}{\design}
          = \CondProb{\designsum=\designsumval_1}{\design}
            + \CondProb{\designsum=\designsumval_2}{\design}
        $ for any two realizations $\designsumval_1,\, \designsumval_2$ of $\designsum$. 
        Thus,

        \begin{equation}
          \begin{aligned}
            \CondProb{\design}{\designsum\in \designsumset}
              &=  
              \frac{
                \CondProb{\designsum\in \designsumset }{\design} 
                  \, \Prob{(\design)}
              }{
                \Prob{(\designsum\in \designsumset)}
              } 
              =  
              \frac{
                \CondProb{\designsum=\designsumval_1 \cup \ldots 
                  \cup \designsum=\designsumval_m }{\design} 
                  \, \Prob{(\design)}
              }{
                \Prob{(\designsum=\designsumval_1)} + \ldots 
                  + \Prob{(\designsum=\designsumval_m)}
              }  \\ 
              &=  
              \frac{
                \Bigl(
                  \CondProb{\designsum=\designsumval_1}{\design} + \ldots + 
                  \CondProb{\designsum=\designsumval_m }{\design} 
                \Bigr)
                  \, \Prob{(\design)}
              }{
                \Prob{(\designsum=\designsumval_1)} + \ldots 
                  + \Prob{(\designsum=\designsumval_m)}
              }
              \\
              &= \frac{
              \sum\limits_{\designsumval\in \designsumset} 
                \CondProb{\design}{\designsum=\designsumval} 
                  \Prob{(\designsum=\designsumval)}
              }{
              \sum\limits_{\designsumval\in \designsumset} 
                \Prob{(\designsum=\designsumval)}
              }
              \,.
          \end{aligned}
        \end{equation}
        
        \begin{equation}
          \begin{aligned}
            \CondExpect{f(\design)}{\designsum\in\designsumset} 
              &= \sum_{\substack{ \design\in\{0, 1\}^{\Nsens} \\ \wnorm{\design}{0}\in\designsumset }}
                f(\design) \CondProb{\design}{\designsum\in\designsumset} \\ 
              &\stackrel{\ref{eqn:GCB_Model_PMF}}{=} \sum_{\substack{ \design\in\{0, 1\}^{\Nsens} \\ \wnorm{\design}{0}\in\designsumset }}
                f(\design) 
                \frac{
                  \sum\limits_{\designsumval\in \designsumset} 
                    \CondProb{\design}{\designsum=\designsumval} 
                      \Prob{(\designsum=\designsumval)}
                }{
                  \sum\limits_{\designsumval\in \designsumset} 
                    \Prob{(\designsum=\designsumval)}
                }
                \\
              &= 
              \frac{1}{
                \sum\limits_{\designsumval\in \designsumset} 
                  \Prob{(\designsum=\designsumval)}
              }
              \sum_{\substack{ \design\in\{0, 1\}^{\Nsens} \\ \wnorm{\design}{0}\in\designsumset }}
                f(\design) 
                \sum\limits_{\designsumval\in \designsumset} 
                  \CondProb{\design}{\designsum=\designsumval} 
                    \Prob{(\designsum=\designsumval)}
              \\
              &= 
              \frac{
                \sum\limits_{\designsumval\in \designsumset} 
                \sum\limits_{\substack{ \design\in\{0, 1\}^{\Nsens} \\ \wnorm{\design}{0}\in\designsumset }}
                  f(\design) 
                    \CondProb{\design}{\designsum=\designsumval} 
                      \Prob{(\designsum=\designsumval)}
              }{
                \sum\limits_{\designsumval\in \designsumset} 
                  \Prob{(\designsum=\designsumval)}
              }  \\
              &= 
              \frac{
                \sum\limits_{\designsumval\in \designsumset} 
                  \CondExpect{f(\design)}{\designsum=\designsumval}
                  \Prob{(\designsum=\designsumval)}
              }{
                \sum\limits_{\designsumval\in \designsumset} 
                  \Prob{(\designsum=\designsumval)}
              }
              \,.
          \end{aligned}
        \end{equation}
        
        \begin{equation}
          \begin{aligned}
            &\CondVar{f(\design)}{\designsum\in\designsumset} 
            = \CondExpect{f(\design)^2}{\designsum\in\designsumset} 
              - 
              \left(
                \CondExpect{f(\design)}{\designsum\in\designsumset}
              \right)^2 
            \\
            &\quad\stackrel{\eqref{eqn:GCB_Model-Function-Expect}}{=} 
              \frac{
                \sum\limits_{\designsumval\in \designsumset} 
                  \CondExpect{f(\design)}{\designsum=\designsumval}
                  \Prob{(\designsum=\designsumval)}
              }{
                \sum\limits_{\designsumval\in \designsumset} 
                  \Prob{(\designsum=\designsumval)}
              }
              -  
              \frac{
                \left(
                  \sum\limits_{\designsumval\in \designsumset} 
                    \CondExpect{f(\design)}{\designsum=\designsumval}
                    \Prob{(\designsum=\designsumval)}
                \right)^2 
                }{
                  \left(
                    \sum\limits_{\designsumval\in \designsumset} 
                      \Prob{(\designsum=\designsumval)}
                \right)^2 
                }\,.
          \end{aligned}
        \end{equation}
    \end{proof}
    \begin{proof}[Proof of \Cref{proposition:probability_models_derivatives}]
      The derivative of the R-function \eqref{eqn:R_function_and_Bernoulli_weights} 
      is given by
      \begin{align}
        \del{R(k, A)}{w_i} 
        &\stackrel{(\ref{eqn:R_function_and_Bernoulli_weights})}{=}
          \del{
            \sum\limits_{\substack{B\subseteq A \\ |B|=k}}\! \prod\limits_{j \in B} w_j
          }{w_i}  
          =\! \sum\limits_{\substack{B\subseteq A \\ |B|=k}}\! \del{ \prod\limits_{j \in B} w_j }{w_i} 
          =\! \sum\limits_{\substack{B\subseteq A\!\setminus\! \{i\} \\ |B|=k}} \prod\limits_{j \in B} w_j 
          = R(k\!-\!1, A\!\setminus\! \{i\}) 
          \,, \label{eqn:R_function_derivative_w}
      \\
        \del{R(k, A)}{\hyperparam_i} 
        &= \del{R(k, A)}{w_i} \del{w_i}{\hyperparam_i} 
        \stackrel{(\ref{eqn:R_function_derivative_w}, \ref{eqn:R_function_and_Bernoulli_weights})}{=}
          \frac{R(k\!-\!1, A\!\setminus\! \{i\})} 
            {\left( 1 - \hyperparam_i \right)^2} 
        \equiv R(k\!-\!1, A\!\setminus\! \{i\}) 
          \left( 1 + w_i \right)^2 
          \,. \label{eqn:R_function_derivative_param}
      \end{align}
      The derivative \eqref{eqn:PB_Model_Full_Gradient} of the PMF of the PB model 
      \eqref{eqn:Poisson_Binomial_PMF}
      is then given by
      \begin{equation}\label{eqn:PB_PMF_derivative}
        \begin{aligned}
          \del{\Prob{(\designsum\!=\!\designsumval)}}{\hyperparam_i}
          &\stackrel{
            (\ref{eqn:Poisson_Binomial_PMF}, 
            \ref{eqn:R_function_derivative_param}
            )}{=} 
          \frac{
            R(\designsumval\!-\!1, S\!\setminus\! \{i\})
          }{
            \left( 1 \!-\! \hyperparam_i \right)^2
            \prod\limits_{j=1}^{\Nsens} (1\!+\!w_j)
          }   
            +  
            \frac{R(\designsumval, S)
            }{
              \prod\limits_{\substack{j=1\\j\neq i}}^{\Nsens} 
                (1\!+\!w_j)
            }
          \\
          &\stackrel{(\ref{eqn:R_function_and_Bernoulli_weights_recurrence_relation_2_formula})}{=}
          \frac{
              R(\designsumval\!-\!1, S\!\setminus\! \{i\})
                -  R(\designsumval, S\!\setminus\! \{i\})
            }{
              \prod\limits_{\substack{j=1\\j\neq i}}^{\Nsens} 
                (1\!+\!w_j)
            }
             \,,
        \end{aligned}
      \end{equation}
      where 
      $1+w_i=\frac{1}{1-\hyperparam_i}$. 
      From \eqref{eqn:PB_PMF_derivative} the derivative  
      $\del{\Prob{(\designsum=\designsumval)}}{\hyperparam_i}$ is 
      independent from the value of $\hyperparam_i$.
      It also shows that the 
      derivative with respect 
      to the $i$th entry of $\hyperparam$ is obtained by calculating
      the R-function value at both $\designsumval$ 
      and $\designsumval-1$ for the set of Bernoulli trials excluding 
      $\design_i$.
      This can be generalized easily 
      to the degenerate case 
      \eqref{eqn:PB_Model_Full} by noting that 
      in the case $\hyperparam_i=1$, the set $I$ already includes 
      the $i$th index 
      which yields the derivative \eqref{eqn:PB_Model_Full_Gradient}.
      One can also show that the derivatives
      \eqref{eqn:PB_Model_Full_Gradient} are continuous 
      over the interval $[0, 1]^{\Nsens}$ which proves smoothness 
      of the PMF of the PB model \eqref{eqn:PB_Model_Full}.

      \noindent{}The derivative 
      \eqref{eqn:CB_Model_Full_Gradient} 
      over $\hyperparam\!\in\!(0, 1)^{\Nsens}$
      is 
      $
      \del{\CondProb{\design}{\designsum\!=\!\designsumval}}{\hyperparam_i}
      = \CondProb{\design}{\designsum\!=\!\designsumval}
        \del{\log\CondProb{\design}{\designsum\!=\!\designsumval}}{\hyperparam_i}
      $, with

      \begin{equation}\label{eqn:CB_PMF_derivative_interior}
        \begin{aligned}
          \del{\log\CondProb{\design}{\designsum\!=\!\designsumval}}{\hyperparam_i}
          &\stackrel{(\ref{eqn:CB_Model_PMF})}{=}
              \left(
                \sum\limits_{j=1}^{\Nsens}{ \design_i \del{\log{w_j}}{\hyperparam_j} }
                -
                \del{\log{R(\designsumval, S)}}{\hyperparam_i} 
            \right)
          \\
          &\stackrel{(\ref{eqn:CB_Model_PMF}, \ref{eqn:R_function_derivative_param}, \ref{eqn:first_second_inclusion})}{=}
            \frac{
              (1\!+\!w_i)^2
            }{
              w_i
            }
            \left(
                \design_i
                -
                \pi_i
            \right)
          \,,
        \end{aligned}
      \end{equation}
      which is a compact version of \eqref{eqn:CB_Model_Full_Gradient} 
      over $\hyperparam\!\in\!(0, 1)^{\Nsens}$.
      The right-hand derivative is 
      \begin{equation}
        \left.\del{
          \CondProb{\design}{\designsum=\designsumval} 
        }{
          \hyperparam_i
        }\right|_{\hyperparam_i=0}
        = \lim_{ \epsilon \to 0 } 
        \frac{
          \CondProb{\design}{\designsum=\designsumval; \hyperparam_i=\epsilon} 
          - \CondProb{\design}{\designsum=\designsumval; \hyperparam_i=0} 
        }{\epsilon} 
        \,,
      \end{equation}
      which, by letting $\design_i=0$, can be equivalently written (by replacing $\epsilon$ 
      with $\hyperparam_i$) as 
      \begin{equation}
        \begin{aligned}
          &\left.\del{
            \CondProb{\design}{\designsum=\designsumval} 
          }{
            \hyperparam_i
          }\right|_{\hyperparam_i=0} 
          \\
          &\,\,
          \stackrel{(\ref{eqn:CB_Model_PMF}, \ref{eqn:CB_Model_Degenerate_1})}{=} 
          \lim_{ \hyperparam_i \rlim 0 } 
          \frac{
            \frac{
              \prod\limits_{j=1}^{\Nsens}{{w_j}^{\design_j}}
            }{
              R(\designsumval, S)
            }
            - 
            \frac{
              \prod\limits_{\substack{j=1,\, j\neq i}}^{\Nsens} w_j^{\design_j} 
            }{
              R(\designsumval, S\!\setminus\!\{i\}) 
            } 
          }{
            \hyperparam_i
          }  
          =
            \left(\prod\limits_{\substack{j=1,\, j\neq i}}^{\Nsens} w_j^{\design_j} \right)
            \lim_{ \hyperparam_i \rlim 0 } 
            \frac{
              \frac{
                  R(\designsumval, S\!\setminus\!\{i\}) 
                  -
                R(\designsumval, S) 
              }{
                R(\designsumval, S) 
                R(\designsumval, S\!\setminus\!\{i\}) 
              }
            }{
              \hyperparam_i
            }  \\ 
          &\,\, \stackrel{(\ref{eqn:R_function_and_Bernoulli_weights_recurrence_relation_2_formula})}{=} 
            \left(\prod\limits_{\substack{j=1,\,j\neq i}}^{\Nsens} w_j^{\design_j} \right)
            \lim_{ \hyperparam_i \rlim 0 } 
            \frac{
              \frac{
                - \hyperparam_i  
                  R(\designsumval\!-\!1, S\!\setminus\!\{i\}) 
              }{
                R(\designsumval, S) 
                R(\designsumval, S\!\setminus\!\{i\}) 
              }
            }{
              \hyperparam_i
            } 
          =
            \frac{
              R(\designsumval\!-\!1, S\!\setminus\!\{i\}) 
              \prod\limits_{\substack{j=1,\,j\neq i}}^{\Nsens} w_j^{\design_j}
            }{
              R(\designsumval, S\!\setminus\!\{i\}) 
            }
            \lim_{ \hyperparam_i \rlim 0 } 
              \frac{
                - 1 
              }{
                R(\designsumval, S) 
              } \\ 
          & \,\, \stackrel{(\ref{eqn:R_function_and_Bernoulli_weights_recurrence_relation_2_formula})}{=} 
            \frac{
              R(\designsumval\!-\!1, S\!\setminus\!\{i\}) 
              \prod\limits_{\substack{j=1,\,j\neq i}}^{\Nsens} w_j^{\design_j} 
            }{
              R(\designsumval, S\!\setminus\!\{i\}) 
            }
            \lim_{ \hyperparam_i \rlim 0 } 
              \frac{
                - 1 
              }{
                R(\designsumval, S\!\setminus\!\{i\}) + 
                  \frac{\hyperparam_i}{1\!-\!\hyperparam_i} 
                R(\designsumval\!-\!1, S\!\setminus\!\{i\}) 
              } 
            \\
          &\quad
            = \frac{
              - R(\designsumval\!-\!1, S\!\setminus\!\{i\}) 
            }{
              \left(R(\designsumval, S\!\setminus\!\{i\}) \right)^2
            }
            \prod\limits_{\substack{j=1\\j\neq i}}^{\Nsens} w_j^{\design_j} 
          \,.
        \end{aligned}
      \end{equation}
      \begin{equation}
        \begin{aligned}
          &\left.\del{
            \CondProb{\design}{\designsum=\designsumval} 
          }{
            \hyperparam_i
          }\right|_{\hyperparam_i=0} 
          \stackrel{(\ref{eqn:CB_Model_PMF}, \ref{eqn:CB_Model_Degenerate_1})}{=} 
          \lim_{ \hyperparam_i \rlim 0 } 
          \frac{
            \frac{
              \prod\limits_{j=1}^{\Nsens}{{w_j}^{\design_j}}
            }{
              R(\designsumval, S)
            }
            - 
           0 
          }{
            \hyperparam_i
          }  
          =
          \prod_{\substack{j=1,\, j\neq i}}^{\Nsens} w_j^{\design_j} 
          \lim_{ \hyperparam_i \rlim 0 } 
          \frac{
            \frac{
              \hyperparam_i
            }{
              (1\!-\!\hyperparam_i) R(\designsumval, S)
            }
          }{
            \hyperparam_i
          } 
          \\
          &\quad \stackrel{(\ref{eqn:R_function_and_Bernoulli_weights_recurrence_relation_2_formula})}{=}     
            \prod_{\substack{j=1,\, j\neq i}}^{\Nsens} w_j^{\design_j} 
            \lim_{ \hyperparam_i \rlim 0 } 
            \frac{
              1
            }{
              (1\!-\!\hyperparam_i) R(\designsumval, S\!\setminus\!\{i\})
              + \hyperparam_i R(\designsumval\!-\!1, S\!\setminus\!\{i\})
            } 
            =
            \frac{
            \prod_{\substack{j=1,\, j\neq i}}^{\Nsens} w_j^{\design_j} 
            }{
              R(\designsumval, S\!\setminus\!\{i\})
            }
          \,.
        \end{aligned}
      \end{equation}
      The derivative 
      $
        \left.\del{
          \CondProb{\design}{\designsum=\designsumval} 
        }{
          \hyperparam_i
        }\right|_{\hyperparam_i=1} 
      $
      is obtained by following the same procedure above.
      The derivative \eqref{eqn:CB_Model_Full_Gradient} is then obtained by
      employing \eqref{eqn:CB_Model_Full_PMF} where the set $I$ excludes the index $i$.
      One can also show that the derivative \eqref{eqn:CB_Model_Full_Gradient}
      are continuous at the bounds.
      Finally, the 
      gradient \eqref{eqn:GCB_Model-LogProb-Grad}
      is obtained by employing the derivative of the logarithm, and,
      \begin{equation}
        \begin{aligned}
          &\nabla_{\hyperparam} \log \CondProb{\design}{\designsum\in \designsumset}
            \stackrel{(\ref{eqn:GCB_Model_PMF})}{=} \nabla_{\hyperparam} \log
                \sum\limits_{\designsumval\!\in\! \designsumset}
                  \CondProb{\design}{\designsum\!=\!\designsumval}
                    \Prob({\designsum\!=\!\designsumval})
              - \nabla_{\hyperparam} \log
                \sum\limits_{\designsumval\!\in\! \designsumset}
                  \Prob({\designsum\!=\!\designsumval})
            \\
            &\quad= \frac{
                \sum\limits_{\designsumval\!\in\! \designsumset}
                  \Prob({\designsum\!=\!\designsumval})
                  \nabla_{\hyperparam}
                    \CondProb{\design}{\designsum\!=\!\designsumval}
                  +
                \sum\limits_{\designsumval\!\in\! \designsumset}
                  \CondProb{\design}{\designsum\!=\!\designsumval}
                  \nabla_{\hyperparam}
                    \Prob({\designsum\!=\!\designsumval})
              }{
                \sum\limits_{\designsumval\!\in\!\designsumset}
                    \CondProb{\design}{\designsum\!=\!\designsumval}
                      \Prob({\designsum\!=\!\designsumval})
              }
              -
              \frac{
                \sum\limits_{\designsumval\!\in\!\designsumset}
                  \nabla_{\hyperparam}
                  \Prob({\designsum\!=\!\designsumval})
              }{
                \sum\limits_{\designsumval\!\in\!\designsumset}
                  \Prob({\designsum\!=\!\designsumval})
              } \,.
        \end{aligned}
      \end{equation}

    \end{proof}

  \section{
    Proofs of Theorems and Lemmas in \Cref{sec:probabilistic_optimization}
  }\label{app:probabilistic_optimization_proofs}

    First we introduce \Cref{proposition:CB_Model_gradient_moments} which will be used 
    in the proofs in this section.
    \begin{proposition}\label{proposition:CB_Model_gradient_moments}
      Let $\design$ be distributed according to the CB model 
      \eqref{eqn:CB_Model_PMF}, then
      \begin{equation}\label{eqn:CB_Model_gradient_moments}
        \Expect{}{ 
            \nabla_{\hyperparam} \log \CondProb{\design}{\designsum\!=\!\designsumval} 
          }
            =\vec{0} \,; \quad 
          \brVar{ 
            \nabla_{\hyperparam} \log \CondProb{\design}{\designsum\!=\!\designsumval} 
          }
            = \sum_{i=1}^{\Nsens} \frac{(1\!+\!w_i)^4}{w_i^2} (\pi_{i}\!-\!\pi_{i}^2) \,, 
      \end{equation}
      where the total variance $\brVar{\vec{x}}$ is
      the sum of elementwise variances.
    \end{proposition}
    \begin{proof}[Proof of \Cref{proposition:CB_Model_gradient_moments}]
      By using the linearity property of the expectation, 
      \begin{equation}\label{eqn:CB_Model_gradient_mean}
          \Expect{}{
            \nabla_{\hyperparam} \log \CondProb{\design}{\designsum=\designsumval} 
          }
          \stackrel{(\ref{eqn:first_second_inclusion},\ref{eqn:CB_PMF_derivative_interior})}{=}
            \sum_{i=1}^{\Nsens} 
            \frac{(1\!+\!w_i)^2}{w_i}  \left(\CondExpect{ \design_i} -\pi_i\right) 
          \vec{e}_i  
          \stackrel{(\ref{eqn:CB_Model_moments_1})}{=}
          \vec{0} \,,
      \end{equation}
      where $\vec{e}_i$  is the $i$th unit vector in $\Rnum^{\Nsens}$. 
      $
        \brVar{ 
          \nabla_{\hyperparam} \log \CondProb{\design}{\designsum=\designsumval} 
        }
      $ is obtained as follows: 
      \begin{equation}
        \begin{aligned}
          &\brVar{ 
            \nabla_{\hyperparam} \log \CondProb{\design}{\designsum\!=\!\designsumval} 
          }
          =  
          \sum_{i=1}^{\Nsens}
            \CondVar{ 
              \nabla_{\hyperparam} \log \CondProb{\design}{\designsum=\designsumval} 
            }{ \designsum=\designsumval} 
            \vec{e}_i  \\
          &\quad\qquad= 
          \sum_{i=1}^{\Nsens}
            \brVar{ 
              \del{\log \CondProb{\design}{\designsum\!=\!\designsumval } }{\hyperparam_i} 
            }
          \stackrel{(\ref{eqn:CB_Model_gradient_mean}, \ref{eqn:CB_PMF_derivative_interior})}{=}
          \sum_{i=1}^{\Nsens}
            \Expect{}{ 
                \frac{(1+w_i)^4}{w_i^2}  \left(\design_i\!-\!\pi_i\right)^2 
            } \\
          &\quad\qquad=
          \sum_{i=1}^{\Nsens} 
            \frac{(1+w_i)^4}{w_i^2} 
            \CondExpect{
              \design_i^2 - 2 \design_{i} \pi_{i} + \pi_{i} ^2
            }{ \designsum\!=\!\designsumval} 
          =
          \sum_{i=1}^{\Nsens} 
            \frac{(1+w_i)^4}{w_i^2} \pi_{i} - \pi_{i}^2 
          \,, 
        \end{aligned}
      \end{equation}
      where we used the fact that 
      $
        \design_i^2=\design_i,\, 
        \CondExpect{\design_i}{\designsum=\designsumval}\!=\!\pi_i
      $ 
      in the last step. 
    \end{proof}

    \begin{proof}[Proof of \Cref{lemma:CB_Model_derivatives_bounds}]
      The first bound \eqref{eqn:CB_Model_derivatives_bounds_1} is given by
      \begin{equation}
        \begin{aligned}
          \label{eqn:CB_Model_derivatives_bounds_1_proof} 
          &\sqnorm{\nabla_{\hyperparam} \CondProb{\design}{\designsum\!=\!\designsumval}}
            = \sum\limits_{i=1}^{\Nsens} 
              \left( 
                \del{\CondProb{\design}{\designsum\!=\!\designsumval}}{\hyperparam_i} 
              \right)^2
          \\
          &\qquad\stackrel{(\ref{eqn:CB_Model_PMF}, \ref{eqn:CB_PMF_derivative_interior})}{=}
            {
            \CondProb{\design}{\designsum\!=\!\designsumval }
            }^2
            \sum\limits_{i=1}^{\Nsens} 
                \frac{(1\!+\!w_i)^4}{w_i^2}
                \left(\design_i-\pi_i\right)^2
          \leq
            \sum\limits_{i=1}^{\Nsens} 
                \frac{(1\!+\!w_i)^4}{w_i^2}
                \left(\design_i-\pi_i\right)^2
                \\
          &\,\,\qquad\stackrel{(\ref{eqn:CB_PMF_derivative_interior})}{=} 
            \sqnorm{\nabla_{\hyperparam} \log \CondProb{\design}{\designsum\!=\!\designsumval}}
          \stackrel{(\ref{eqn:R_function_and_Bernoulli_weights_bounds})}{\leq} 
            \sum\limits_{i=1}^{\Nsens} 
              C^2
              \left(\design_i-\pi_i\right)^2 
          <
          \Nsens\, C^2
          \,.
        \end{aligned}
      \end{equation}
      where $C= \max\limits_{i=1,\ldots,\Nsens} \frac{(1+w_i)^2}{w_i}$;
      we used the fact that $\CondProb{\design}{\designsum=\designsumval}$ is constant 
      for all terms in the sum in the second step; 
      $0\leq\CondProb{\design}{\designsum=\designsumval}\leq 1$ is used in the third step;
      and 
      because $\design_i\in\{0, 1\},\, \pi_i\in(0, 1)$, it follows that 
      $0 < \left(\design_i\!-\!\pi_i\right)^2<1$, and
      $
        \sum_{i=1}^{\Nsens}(\design_i\!-\!\pi_i)^2< 
      \Nsens$, 
      which is used 
      in the final step which proves \eqref{eqn:CB_Model_derivatives_bounds_1}.
      The second bound \eqref{eqn:CB_Model_derivatives_bounds_2} is obtained
      as follows:
      \begin{equation}\label{eqn:CB_Model_derivatives_bounds_2_proof}
        \begin{aligned}
          \Expect{}{
            \sqnorm{
              \nabla_{\hyperparam} \log \CondProb{\design}{\designsum\!=\!\designsumval}  
            }
          }
          &=
          \Trace{
            \Expect{}{
              \left(
                \nabla_{\hyperparam} \log 
                  \CondProb{\design}{\designsum=\designsumval}  
              \right)\tran
              \left(
                \nabla_{\hyperparam} \log 
                  \CondProb{\design}{\designsum=\designsumval}  
              \right)
            }
          }
          \\
          &\stackrel{(\ref{eqn:CB_Model_gradient_moments})}{=} 
          \brVar{
              \nabla_{\hyperparam} \log 
                \CondProb{\design}{\designsum\!=\!\designsumval}  
            }
          \stackrel{(\ref{eqn:CB_Model_gradient_moments})}{=} 
            \sum_{i=1}^{\Nsens} \frac{(1+w_i)^4}{w_i^2} (\pi_{i}-\pi_{i}^2) 
          \\
          &\leq
            \frac{1}{4}\, \sum_{i=1}^{\Nsens} \frac{(1+w_i)^4}{w_i^2} 
            \stackrel{(\ref{eqn:R_function_and_Bernoulli_weights_bounds})}{\leq}
              \frac{\Nsens }{4}\, C^2
              \,, 
        \end{aligned}
      \end{equation}
      where we used the circular property of the trace, 
      and linearity of the trace and the expectation
      in the first step. 
      Moreover, since for $\hyperparam_i\in(0, 1)$ it holds that 
      $\pi_i\in(0, 1)$  hence $(\pi_i-\pi_i^2)$ 
      is a quadratic with maximum value $\frac{1}{4}$ which is used in the last step.
      \eqref{eqn:CB_Model_derivatives_bounds_3} is achieved as follows: 
      \begin{equation}
        \begin{aligned}
          \del{\pi_i}{\hyperparam_i} 
            &\stackrel{\eqref{eqn:first_second_inclusion}}{=}
            \frac{
              R(z, S) w_i \del{R(z\!-\!1, S\!\setminus\!\{i\})}{\hyperparam_i} 
                - w_i R(z\!-\!1, S\!\setminus\!\{i\}) \del{R(z, S)}{\hyperparam_i}
            }{
              R(z, S)^2
            }
            \stackrel{\eqref{eqn:R_function_derivative_param}}{=}
              \frac{- \pi_i^2 (1\!+\!w_i)^2}{w_i}  \,,
          \label{eqn:pi_i_derivative_i}
        \end{aligned}
      \end{equation}
      
      \begin{equation}
        \begin{aligned}
          \del{\pi_i}{\hyperparam_j} 
            &\stackrel{\eqref{eqn:first_second_inclusion}}{=}
            \frac{
              R(z, S) w_i \del{R(z\!-\!1, S\!\setminus\!\{i\})}{\hyperparam_j} 
                - w_i R(z\!-\!1, S\!\setminus\!\{i\}) \del{R(z, S)}{\hyperparam_j}
            }{
              R(z, S)^2
            }
            \\
            &\stackrel{\eqref{eqn:R_function_derivative_param}}{=}
              \frac{(1\!+\!w_j)^2 (\pi_{i, j}\!-\!\pi_i \pi_j)}{w_j}  \,,
          \label{eqn:pi_i_derivative_j}
        \end{aligned}
      \end{equation}
      
      \begin{equation}
        \begin{aligned}
          &\delll{ 
              \CondProb{\design}{\designsum\!=\!\designsumval } 
            }{\hyperparam_i}{\hyperparam_j}
            \stackrel{\eqref{eqn:CB_PMF_derivative_interior}}{=}
              \del{}{\hyperparam_j} \left(
                \CondProb{\design}{\designsum\!=\!\designsumval} 
                \frac{(1\!+w_i)^2(\design_i\!-\!\pi_i)}{w_i}
              \right)
            \\
            &\qquad=
              \frac{(1\!+w_i)^2}{w_i}
              \del{}{\hyperparam_j} \left(
                - \pi_i
                \CondProb{\design}{\designsum\!=\!\designsumval} 
              \right)
            \\
            &\quad\stackrel{
              ( \ref{eqn:CB_PMF_derivative_interior}, 
                \ref{eqn:pi_i_derivative_i}, 
                \ref{eqn:pi_i_derivative_j}
                )}{=}
                \frac{(1\!+\!w_i)^2}{w_i}
                \frac{(1\!+\!w_j)^2}{w_j}
                \CondProb{\design}{\designsum\!=\!\designsumval} 
                \left(\begin{aligned}
                  &\pi_{i}^2 \delta_{ij} 
                  \!+\! (\pi_{i}\pi_{j}\!-\!\pi_{i,j}) (1\!-\!\delta_{ij})
                  \\
                  &\!+\! (\design_{i}\!-\!\pi_{i}) (\design_{j}\!-\!\pi_{j})
                \end{aligned}
                \right) \,,
          \label{eqn:CB_Hessian}
        \end{aligned}
      \end{equation}
      
      \begin{equation}
        \begin{aligned}
          \abs{
            \delll{ 
              \CondProb{\design}{\designsum\!=\!\designsumval } 
            }{
              \hyperparam_i}{\hyperparam_j}
            }
          &\stackrel{(\ref{eqn:R_function_and_Bernoulli_weights_bounds}, \ref{eqn:CB_Hessian})}{\leq}
            C^2\, \left(
                \delta_{ij} 
                + (1-\delta_{ij}) 
                + 1 
            \right)
            = 2  C^2 
            \,, 
        \end{aligned}
      \end{equation}
      where we used the fact that 
      $\pi_{i}, \pi_{i, j}$ \eqref{eqn:first_second_inclusion}
      satisfy the following relations (see, e.g., \cite{sunter1986solutions} ):
      $  \sum_{i=1}^{N} \pi_i = n  \,; \, 
        \pi_{i} \pi_{j} >  \pi_{i, j}  \,,\, 
            i,j=1, \ldots, \Nsens\,, i\neq j 
      $. 
      Note that this also proves 
      \eqref{eqn:CB_Model_Full_derivatives_bounds} 
      for \eqref{eqn:CB_Model_Full_Gradient} when $0<\hyperparam_i<1$.
      The degenerate cases in \eqref{eqn:CB_Model_Full_Gradient} rely only on non-degenerate 
      components of $\hyperparam$.
      Thus, from \eqref{eqn:CB_Model_derivatives_bounds} it follows that 
      there is always a finite number $\widehat{C}$ (defined based on the maximum/minimum success
      probability $0<\hyperparam_i<1$) that bounds first- and second-order derivatives.
      which proves \eqref{eqn:CB_Model_derivatives_bounds}.
    \end{proof}
    \begin{proof}[Proof of
      \Cref{theorem:probabilistic_optimization_derivatives_bounds_exact}
      ]
      By using \eqref{eqn:probabilistic_optimization_budget_equality_constraint} and 
      the triangle inequality of the norm,
      \begin{equation}
        \norm{\nabla_{\hyperparam} \stochobj(\hyperparam)} 
        \stackrel{(\ref{eqn:probabilistic_optimization_budget_equality_constraint})}{\leq}
          \sum_{\substack{\design \in \{0, 1\}^{\Nsens} \\ \wnorm{\design}{0}=\designsumval}}
          \norm{
            \nabla_{\hyperparam}\, \CondProb{\design}{\designsum=\designsumval}
          } 
          \stackrel{(\ref{eqn:CB_Model_derivatives_bounds_1})}{\leq} 
          M\, \binom{\Nsens}{\designsumval}  C \sqrt{\Nsens} \,,
      \end{equation}
      which proves \eqref{eqn:exact_gradient_norm_bound}.
      For two realizations of the CB model parameters
      $\hyperparam[1], \hyperparam[2]$, 
      \begin{equation}
        \norm{ 
          \nabla_{\hyperparam}\,\stochobj(\hyperparam[1]) - 
          \nabla_{\hyperparam}\,\stochobj(\hyperparam[2]) 
        } 
        \leq 
        \norm{ 
          \nabla_{\hyperparam}\,\stochobj(\hyperparam[1])
        } + 
        \nabla_{\hyperparam}\,\stochobj(\hyperparam[2]) 
        \stackrel{(\ref{eqn:exact_gradient_norm_bound})}{\leq}
        2 \, M\, \sqrt{
          \binom{\Nsens}{\designsumval} \Nsens C
        } \,, 
      \end{equation}
      which proves \eqref{eqn:exact_gradient_Lipschitz}.
      The bound on the Hessian entries \eqref{eqn:Hessian_entries_bound}
      is given by
      \begin{equation}
          \left(
            \delll{\stochobj}{\hyperparam_i}{\hyperparam_j}
          \right)^2
          =
          \left( \delll{}{\hyperparam_i}{\hyperparam_j}
            \sum_{\substack{\design \in \{0, 1\}^{\Nsens} \\ \wnorm{\design}{0}=\designsumval}}
            \obj(\design) 
            \CondProb{\design}{\designsum=\designsumval}
          \right)^2  
          \stackrel{(\ref{eqn:CB_Model_derivatives_bounds_3})}{\leq}
             2 C^2  M^2  \binom{\Nsens}{\designsumval} \,,
      \end{equation}
      which 
      proves \eqref{eqn:Hessian_entries_bound}.
      The boundedness of 
      \eqref{eqn:exact_gradient_bounds_degenerate} 
      follows 
      from \eqref{eqn:exact_gradient_bounds} 
      and \eqref{eqn:CB_Model_Full_derivatives_bounds}.
    \end{proof}
    \begin{proof}[Proof of
      \Cref{theorem:probabilistic_optimization_derivatives_bounds_stochastic}
      ]

      The proof of \eqref{eqn:probabilistic_optimization_derivatives_bounds_stochastic_1} 
      is obtained as follows.
      \begin{equation} 
      \begin{aligned} 
        \Expect{}{\widehat{\vec{g}}}
          &= \Expect{}{
            \frac{1}{\Nens} 
              \sum_{k=1}^{\Nens}
              \obj(\design[k]) \nabla \log \CondProb{\design[k]}{\designsum} 
          } 
          \\
          &\stackrel{(\ref{eqn:probabilistic_optimization_derivatives})}{=} 
          \frac{1}{\Nens} 
              \sum_{k=1}^{\Nens}
              \Expect{}{
              \obj(\design) \nabla \log \CondProb{\design}{\designsum} 
          }  
          \\
          &= \frac{1}{\Nens} 
              \sum_{k=1}^{\Nens}
              \sum_{\design}
                \obj(\design) \nabla \log \CondProb{\design}{\designsum} 
                  \CondProb{\design}{\designsum}
          \\
          &= \frac{1}{\Nens} 
              \sum_{k=1}^{\Nens} 
                \Expect{\CondProb{\design}{\designsum} }{\obj(\design) \nabla \log \CondProb{\design}{\designsum}}  
          \\
          &= \Expect{\CondProb{\design}{\designsum}}{\obj(\design) \nabla \log \CondProb{\design}{\designsum}} 
          \stackrel{(\ref{eqn:budget_equality_constraint_stochastic_gradient},\ref{eqn:budget_equality_constraint_inclusion_gradient})}{=}\vec{g} \,,
      \end{aligned} 
      \end{equation} 
      which proves that the stochastic approximation is an unbiased estimator. 
      Note that 
      \begin{align}
        \label{eqn:final_1}
        \Expect{}{\widehat{\vec{g}}\tran \,\widehat{\vec{g}} }
          &= \brVar{\widehat{\vec{g}}} + 
            \Expect{}{\widehat{\vec{g}}}\tran \Expect{}{\widehat{\vec{g}}}
          = \brVar{\widehat{\vec{g}}} + \vec{g}\tran \vec{g} \,, \\
        \brVar{\widehat{\vec{g}}}
          &= \frac{1}{\Nens^2} 
            \sum_{k=1}^{\Nens}
            \brVar{
              \obj(\design[k]) \nabla \log \CondProb{\design}{\designsum}
            } 
          \leq \frac{M^2}{\Nens} 
            \brVar{
              \nabla \log \CondProb{\design}{\designsum}
            } 
          \,,
      \end{align}
      where $M= \max\limits_{\design\in \designdomain}{\{ \left|\obj(\design)\right| \}}$.
      By \Cref{lemma:CB_Model_derivatives_bounds}, there is a positive finite constant 
      $\widetilde{C}$ such that 
      $ \brVar{
         \nabla \log \CondProb{\design}{\designsum}
        }\leq \widetilde{C}$, which bounds the first term in \eqref{eqn:final_1} and thus completes the proof of 
        \eqref{eqn:probabilistic_optimization_derivatives_bounds_stochastic_2} 
        and the theorem. 
    \end{proof}  


\bibliographystyle{siamplain}
  \bibliography{references}

  \null \vfill
    \begin{flushright}
      \scriptsize 
      \framebox{\parbox{5.0in}{
        The submitted manuscript has been created by UChicago Argonne, LLC,
        Operator of Argonne National Laboratory (``Argonne"). Argonne, a
        U.S. Department of Energy Office of Science laboratory, is operated
        under Contract No. DE-AC02-06CH11357. The U.S. Government retains for
        itself, and others acting on its behalf, a paid-up nonexclusive,
        irrevocable worldwide license in said article to reproduce, prepare
        derivative works, distribute copies to the public, and perform
        publicly and display publicly, by or on behalf of the Government.
        The Department of Energy will provide public access to these results 
        of federally sponsored research in accordance with the DOE Public Access Plan. 
        http://energy.gov/downloads/doe-public-access-plan. 
      }}\normalsize
    \end{flushright}

\end{document}